\newcommand{\e}{\epsilon}
\newcommand{\app}{\mathrm{app}}
\newcommand{\D}{\mathbb{D}}
\newcommand{\N}{\mathbb{N}}
\newcommand{\Z}{\mathbb{Z}}
\newcommand{\R}{\mathbb{R}}
\newcommand{\C}{\mathbb{C}}
\newcommand{\SL}{\mr{SL}}
\newcommand{\U}{\mr{U}}
\newcommand{\SU}{\mr{SU}}
\newcommand{\mf}{\mathfrak}
\newcommand{\mr}{\mathrm}
\newcommand{\mc}{\mathcal}
\newcommand{\End}{\mathop{\rm End}\nolimits}
\newcommand{\Hom}{\mathop{\rm Hom}\nolimits}
\renewcommand{\ker}{\mathop{\rm ker}\nolimits}
\renewcommand{\deg}{\mathop{\rm deg}\nolimits}
\newcommand{\Tr}{\mathop{\rm Tr}\nolimits}
\renewcommand{\mod}{\mathop{\rm mod}\nolimits}
\newcommand{\delb}{\bar\partial}
\newcommand{\note}[1]{\marginpar{\raggedright\if@twoside\ifodd\c@page\raggedleft\fi\fi\sf\scriptsize \red{RMK: #1}}}
\newcommand\red[1]{\textcolor{red}{#1}}
\newcommand{\be}{\begin{equation}}
\newcommand{\ben}{\begin{equation}\nonumber}
\newcommand{\ee}{\end{equation}}
\newcommand{\bp}{\begin{para}}
\newcommand{\ep}{\end{para}}
\newcommand{\bps}{\begin{paras}}
\newcommand{\eps}{\end{paras}}
\newcommand{\su}{\mathfrak{su}}
\def\pr{\textrm{pr}}
\newtheorem{proposition}{\textbf{Proposition}}
\newtheorem{lemma}[proposition]{\textbf{Lemma}}
\newtheorem{corollary}[proposition]{\textbf{Corollary}}
\newtheorem{theorem}[proposition]{\textbf{Theorem}}
\theoremstyle{definition}
\newtheorem*{example*}{\textbf{Example}}
\newtheorem*{theorem*}{\textbf{Theorem}}
\newtheorem{step}{Step}
\newcounter{para}[section]
\newenvironment{para}[2][]{\refstepcounter{para}\noindent\ignorespaces{\bf #1\thepara. #2.} \rmfamily}{\noindent\ignorespacesafterend\bigskip}
\newenvironment{paras}[1]{\noindent\ignorespaces{\bf #1.} \rmfamily}{\noindent\ignorespacesafterend\bigskip}
\numberwithin{proposition}{section}
\numberwithin{definition}{section}
\begin{document}
\title[Higgs bundles on degenerating  Riemann surfaces]{Moduli spaces of Higgs bundles on degenerating   Riemann surfaces}
\date{\today}

\author{Jan Swoboda}
\address{Mathematisches Institut der LMU M\"unchen\\Theresienstra{\ss}e 39\\D--80333 M\"unchen\\ Germany}
\email{swoboda@math.lmu.de}


\begin{abstract}
We prove a gluing theorem for   solutions $(A_0,\Phi_0)$ of Hitchin's self-duality equations with logarithmic singularities    on a  rank-$2$ vector bundle over a  noded Riemann surface $\Sigma_0$ representing a boundary point of Teichm\"uller moduli space. We show that  every nearby smooth Riemann surface $\Sigma_1$  carries a smooth solution $(A_1,\Phi_1)$ of the self-duality equations, which may be viewed as a    desingularization of $(A_0,\Phi_0)$.
\end{abstract}

\maketitle


%
%
\section{Introduction}

The moduli space of solutions to Hitchin's self-duality equations on a compact Riemann surface, by its   definition primarily an object of geometric analysis,   is intimately related to   a  number of diverse  fields such as    algebraic geometry, geometric topology and the emerging subject of higher Teichm\"uller theory. From an analytic point of view, it is the space of gauge equivalence classes of solutions to the system of first-order partial differential equations
\be
\label{he1}
\begin{array}{rcl}
F_A^{\perp}+[\Phi\wedge\Phi^*] & = & 0, \\[0.4ex]
\delb_A\Phi & =& 0
\end{array}
\ee
for a pair $(A,\Phi)$, where $A$ is a unitary connection on a hermitian vector
bundle $E$ over a Riemann surface $(\Sigma,J)$, and $\Phi$  is an $\End(E)$-valued $(1,0)$-form, the so-called Higgs field. Here $F_A^\perp$ is the trace-free part of the curvature of $A$,  a $2$-form with values in the skew-hermitian endomorphisms 
of $E$, and the adjoint Higgs field $\Phi^*$ is computed with respect to the hermitian metric on $E$. When restricted to  a suitable slice of the action by unitary gauge transformations, Eq.\  \eqref{he1} form a system of elliptic partial differential equations.  We always assume that   the genus $g$ of the closed surface $\Sigma$ is at least $2$. 
\medskip\\
The moduli space $\mathcal M$ of solutions to the   self-duality  equations, first introduced by Hitchin \cite{hi87} as  a two-dimensional reduction of the standard self-dual Yang-Mills equations in four dimensions, shows a  rich geometric structure in very different ways: as a quasi-projective variety \cite{hi87,si88,ni91}, as the phase space of a completely integrable system \cite{hi87b,hsw99}, and (in case where the rank and degree of $E$ are coprime) as a noncompact smooth manifold carrying a complete hyperk\"ahler metric $g_{\textrm{WP}}$ of Weil-Petersson type.  Concerning the second and the last point, we mention in particular 
the recent work of Gaiotto, Moore and Neitzke~\cite{gmn10,gmn13}   concerning hyperk\"ahler metrics on holomorphic integrable systems. They describe     a natural but incomplete hyperk\"ahler metric $g_0$ on $\mathcal M$ as a leading term (the semiflat metric in the language of~\cite{fr99}) plus an asymptotic series of non-perturbative corrections, which decay exponentially in the distance from some fixed point in moduli space. The coefficients of these correction terms are given there in terms of a priori divergent expressions coming from a wall-crossing formalism.  The thus completed hyperk\"ahler metric is conjectured to coincide with the above mentioned   metric $g_{\textrm{WP}}$ on moduli space. A further motivation to study this moduli space is Sen's conjecture about the $L^2$-cohomology of the monopole moduli spaces~\cite{se94} and the variant of it due to Hausel concerning  the $L^2$-cohomology of $\mathcal M$.
\medskip\\
The first study of the self-duality equations on    higher dimensional K\"ahler manifolds is due to Simpson \cite{si88,si90,si92}. As shown by Donaldson \cite{do87} for Riemann surfaces (and   extended to the higher dimensional case by Corlette \cite{co88}) the moduli space $\mathcal M$ corresponds  closely to the variety of representations  of (a central extension of) the fundamental group of $\Sigma$ into the Lie group $\SL(r,\C)$, $r = \mathrm{rk}(E)$ (see~\cite{go12} and references therein), and thus  permits to be studied by more algebraic and topological methods.
\medskip\\
In recent joint work,   Mazzeo, Wei{\ss}, Witt, and  the present author started an investigation of the large scale structure of this moduli space, resulting so far in a precise description of the profile of solutions in the limit of  ``large" Higgs fields, i.e.~when $\|\Phi\|_{L^2}\to\infty$, cf.~\cite{msww14,msww15}. Moreover, a geometric compactification of $\mathcal M$ was obtained, which consists in adding to $\mathcal M$  configurations $(A,\Phi)$, that are singular in a finite set of points  and admit an interpretation as so-called parabolic Higgs bundles.
\medskip\\
The present work fits into a broader line of research which aims at an understanding of the  various limits and degenerations of   Higgs bundle moduli spaces, one aspect of which has been worked out in the above mentioned articles. Complementary to these works,  we here undertake a first step in describing a rather  different degeneration phenomenon. While the complex structure $J$ of the underlying surface $\Sigma$ has been kept fixed in most of  the results  concerning the structure of the moduli space, we now view $\mathcal M=\mathcal M(\Sigma,J)$ as being parametrized by the  complex structure $J$.  Our  aim  is then to understand possible degenerations of $\mathcal M$ in the limit of a sequence   $(\Sigma,J_i)$ of complex surface converging to a ``noded" surface $(\Sigma,J_0)$, thus representing a  boundary point in the Deligne--Mumford compactification of Teichm\"uller moduli space.  Throughout, we restrict attention to the case of a {\bf{complex vector bundle $E$ of rank $r=2$}}. As a first result towards an understanding of this degeneration, we identify a limiting moduli  space $\mathcal M(\Sigma,J_0)$ consisting of those solutions $(A,\Phi)$ to the self-duality equations on the degenerate surface $(\Sigma,J_0)$ which  in the set $\mathfrak p$ of nodes show   so-called logarithmic (or first-order) singularities.  The precise definition of $\mathcal M(\Sigma,J_0)$  is given in Eq.\ \eqref{eq:defmodspace} below. 
\medskip\\
The main result presented here is   a gluing theorem which states that, subject to the assumptions (A1--A3) below, any solution $(A,\Phi)$ representing a point in $\mathcal M(\Sigma,J_0)$   arises as the uniform limit of a sequence of smooth solutions on $\mathcal M(\Sigma,J_i)$ as $J_i\to J_0$. To formulate these assumptions, we  choose near each node $p\in\mathfrak p$ a neighborhood $U_p$ complex isomorphic to  $\{zw=0\}$, cf.~\textsection \ref{subsect:degeneratingfamilies} for details. The  quadratic differential  $q:=\det\Phi$ is then  meromorphic with    poles of order at most two   in $\mathfrak p$, and we may assume that  near $z=0$   it  is in the standard form 
\begin{equation}\label{eq:holQD}
q=-C_{p,+}^2\frac{dz^2}{z^2}
\end{equation}
for some constant $C_{p,+}\in\C$, and similarly near $w=0$ for some constant $C_{p,-}$. As shown in \cite{bibo04} (cf.\ Lemma \ref{thm:asymptbehav}) the solution  $(A,\Phi)$ differs  near $z=0$  from some model solution $(A_{p,+}^{\mod}, \Phi_{p,+}^{\mod})$ (possibly after applying a unitary gauge transformation) by a term which decays polynomially as $|z|\searrow 0$, and similarly near $w=0$ for some  model solution $(A_{p,-}^{\mod}, \Phi_{p,-}^{\mod})$. These model solutions are   singular solutions to the self-duality equations of the form
\begin{equation*}
A_{p,\pm}^{\mod}=\begin{pmatrix}\alpha_{p,\pm}&0\\0&-\alpha_{p,\pm}\end{pmatrix}\left(\frac{dz}{z}-\frac{d\bar z}{\bar z}\right),\qquad \Phi_{p,\pm}^{\mod}=\begin{pmatrix}C_{p,\pm}&0\\0&-C_{p,\pm}\end{pmatrix} \frac{dz}{z},
\end{equation*}
where $\alpha_{p,\pm}\in\R $ and  the constants  $C_{p,\pm}\in\C  $ are  as above. We   impose   the following assumptions.
\begin{itemize}
\item[{\bf{(A1)}}]
The constants $C_{p,\pm}\neq0$ for every $p\in\mathfrak p$.
\item[{\bf{(A2)}}]
The constants $(\alpha_{p,+},C_{p,+})$ and $(\alpha_{p,-},C_{p,-})$ satisfy the matching conditions $\alpha_{p,+}=-\alpha_{p,-}$ and  $C_{p,+}=-C_{p,-}$ for every $p\in\mathfrak p$.   
\item[{\bf{(A3)}}]
The meromorphic quadratic differential $q$ has at least one simple zero.  \end{itemize}
The assumptions (A1--A3) will be   discussed in \textsection \ref{subsect:approxsolutions}. Let us point out here that working exclusively with    model solutions in diagonal form (rather than admitting   model Higgs fields or connections with non-semisimple endomorphism parts) matches exactly the main assumption of Biquard and Boalch in \cite[p.\ 181]{bibo04}. We keep this setup   in order to have their results available.  Assumption (A1), which is      a generic assumption on the meromorphic quadratic differential $q$, allows us to transform the solution $(A,\Phi)$   into   a standard model form near each   $p\in\mathfrak p$ via a complex gauge transformation close to the identity.  Assumption (A3), likewise satisfied by a generic meromorphic quadratic differential, is a weakening of the one used in the gluing construction of  \cite{msww14}, where it was required that \emph{all} zeroes of $q$ are simple. It allows us to show, very much in analogy to the case of closed Riemann surfaces, that a certain linear operator associated with the deformation complex of the self-duality equations at $(A,\Phi)$ is injective (cf.\ Lemma \ref{lemma:limitkernelnew}). This property if then used  in Theorem \ref{thm:L2est}  to  assure  absence of so-called small eigenvalues of  the linear operator governing the gluing construction to be described below. 
\medskip\\  
   We can now state our main theorem.

\begin{theorem}[Gluing theorem]\label{thm:mainthm}
Let $(\Sigma,J_0)$ be a  Riemann surface with nodes in a finite set of points $\mathfrak p\subset\Sigma$. Let   $(A_0,\Phi_0)$ be a solution of the self-duality equations  with   logarithmic singularities in $\mathfrak p$, thus representing a point in $\mathcal M(\Sigma,J_0)$. Suppose that  $(A_0,\Phi_0)$ satisfies the assumptions (A1--A3).   Let  $(\Sigma,J_i)$ be a sequence of smooth  Riemann surfaces converging uniformly (in a sense to be made precise in \textsection \ref{subsect:degeneratingfamilies}) to $(\Sigma,J_0)$. Then, for every sufficiently large $i\in\N$, there exists  a smooth solution $(A_i,\Phi_i)$ of Eq.\ \eqref{he1} on $(\Sigma,J_i)$ such that $(A_i,\Phi_i)\to (A_0,\Phi_0)$ as $i\to\infty$ uniformly on compact subsets of $\Sigma\setminus\mathfrak p$.
\end{theorem}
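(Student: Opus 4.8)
The plan is to run a gluing (desingularization) argument in the spirit of \cite{msww14}, adapted to the case where it is the conformal structure, rather than the Higgs field, that degenerates. There are four ingredients: an approximate solution on $(\Sigma,J_i)$, an estimate showing it solves \eqref{he1} with small error, uniform invertibility of the associated linearized operator, and a contraction-mapping correction.

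First I would construct the approximate solution $(A_i^{\app},\Phi_i^{\app})$. Away from a shrinking neighborhood of $\mathfrak p$ the surface $(\Sigma,J_i)$ is biholomorphic to $(\Sigma,J_0)$, and there one simply keeps $(A_0,\Phi_0)$. Desingularizing a node $p$ replaces the local model $\{zw=0\}$ by a thin neck, which in its natural metric is a long cylinder; onto its two ends one transplants the diagonal model solutions $(A_{p,\pm}^{\mod},\Phi_{p,\pm}^{\mod})$. Here assumptions (A1) and (A2) enter exactly as indicated after the statement: (A1) allows one, via a complex gauge transformation close to the identity, to put $(A_0,\Phi_0)$ into model form near each node, and (A2) is the compatibility condition ensuring that the $z$-side and $w$-side model pieces match up across the neck. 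Interpolating with cutoff functions on the overlap annuli — where, because the neck is long, the true solution is polynomially close to the model by Lemma \ref{thm:asymptbehav} — produces a smooth pair $(A_i^{\app},\Phi_i^{\app})$ which satisfies \eqref{he1} exactly outside the cutoff regions.

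The analysis is then carried out in weighted H\"older (or Sobolev) spaces adapted to the degenerating geometry — cylindrical on the necks, with weights prescribed by the exponents in the model solutions $(\alpha_{p,\pm},C_{p,\pm})$. I would first bound the error $\mathcal E_i$, the value of the self-duality operator on $(A_i^{\app},\Phi_i^{\app})$: it is supported on the cutoff annuli and, by the polynomial decay just mentioned together with the length of the neck tending to infinity, $\|\mathcal E_i\|\to 0$ in the weighted norm. The decisive step is to show that the linearization $D_i$ of \eqref{he1} at $(A_i^{\app},\Phi_i^{\app})$, coupled with a Coulomb-type gauge-fixing relative to the approximate solution so as to obtain an elliptic operator, is invertible on the weighted spaces with a bound on $D_i^{-1}$ independent of $i$. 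One constructs a right inverse by patching a parametrix for the model operator on each neck with a parametrix built from the deformation complex on the compact part; the obstruction to doing this uniformly is the possible presence of ``small eigenvalues'' of $D_i^\ast D_i$ (approximate kernel elements concentrating on the necks). This is precisely where assumptions (A1) and especially (A3) are needed: they guarantee injectivity of the limiting linear operator on the noded surface (Lemma \ref{lemma:limitkernelnew}), and by Theorem \ref{thm:L2est} this injectivity rules out small eigenvalues and yields the desired uniform lower bound.

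Granting uniform invertibility, solving \eqref{he1} in the gauge slice amounts to a fixed-point problem $\xi=-D_i^{-1}\bigl(\mathcal E_i+Q_i(\xi)\bigr)$, where $Q_i$ gathers the quadratic terms (the bracket $[\Phi\wedge\Phi^\ast]$ and the nonlinear part of $\delb_A\Phi$) and obeys a uniform quadratic estimate on small balls. Since $\|D_i^{-1}\|$ is uniformly bounded while $\|\mathcal E_i\|\to 0$, the contraction mapping principle provides, for all large $i$, a small solution $\xi_i$, hence a genuine solution $(A_i,\Phi_i)=(A_i^{\app},\Phi_i^{\app})+\xi_i$ of \eqref{he1} on $(\Sigma,J_i)$, smooth by elliptic regularity. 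Finally, on any fixed compact $K\subset\Sigma\setminus\mathfrak p$ the approximate solution coincides with $(A_0,\Phi_0)$ for $i$ large, and the weighted bound on $\xi_i$ forces $\xi_i\to 0$ in $C^\infty(K)$; thus $(A_i,\Phi_i)\to(A_0,\Phi_0)$ uniformly on compact subsets of $\Sigma\setminus\mathfrak p$, as claimed. I expect the uniform lower bound on $D_i$ — the absence of small eigenvalues, where the full strength of (A1)--(A3) and of the Biquard--Boalch asymptotics is brought to bear — to be the main obstacle; once the correct weighted spaces are in place, the error estimate and the fixed-point step are essentially routine.
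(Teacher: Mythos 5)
Your overall architecture (approximate solution from the model pieces via (A1)--(A2), error estimate from the Biquard--Boalch asymptotics, linear estimate, contraction) matches the paper, but there is one genuine gap: the claim that the linearized operator $D_i$ admits an inverse with a bound \emph{independent of} $i$. This is false in this setting, and it is exactly the point the whole analysis turns on. The cross-sectional operator on the neck (the operator $D$ of Eq.\ \eqref{eq:operatorD}) has a nontrivial kernel --- the constant diagonal endomorphisms, which is why Proposition \ref{prop:asympttrace} produces nonzero asymptotic traces --- so there is a zero indicial root and no uniform spectral gap as the neck length $T=|\log R|\to\infty$. What injectivity of the limiting operator $L_1+L_2^{\ast}$ on extended $L^2$ (Lemma \ref{lemma:limitkernelnew}, using (A1) and (A3)) buys, via the Cappell--Lee--Miller/Nicolaescu theorem, is only the absence of \emph{small} eigenvalues, i.e.\ a lower bound of order $T^{-1}$ for the Dirac-type operator and hence $T^{-2}$ for its square; Theorem \ref{thm:L2est} accordingly gives $\|G_R\|\lesssim|\log R|^{2}$, not a uniform bound. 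Consequently the fixed-point step is not ``essentially routine'': one must check that the polynomial error $O(R^{\delta''})$ of the approximate solution beats the $(\log R)^2$ loss, and run the contraction on a ball whose radius shrinks like $|\log R|^{-2-\epsilon}$ (this is how the paper closes the argument in Theorem \ref{thm:contrthm}). Your sketch, as written, would let you absorb the quadratic terms and the error with an $O(1)$ inverse, which the geometry simply does not provide; switching to weighted neck norms does not remove the obstruction, because the zero mode of the cross-section sits at the critical weight.

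A secondary, non-fatal difference: you correct both equations at once using the full linearization plus a Coulomb-type gauge fixing, whereas the paper seeks the exact solution inside the \emph{complex} gauge orbit of $(A_R^{\app},\Phi_R^{\app})$, so that $\delb_A\Phi=0$ is preserved identically and only the curvature equation remains, governed by the scalar-type operator $L_R=\Delta_{A}-i\ast M_{\Phi}$ on $\Omega^0(i\mathfrak{su}(E))$. Your route is viable in principle, but it obliges you to prove the $|\log R|^{2}$-type resolvent bound for the gauge-fixed Hessian of the coupled system rather than deducing it, as the paper does, from $L_R\geq L_{1,T}^{\ast}L_{1,T}$ together with the CLM estimate for $\mathcal D_T$; so it adds work precisely at the step that is already the delicate one.
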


The article is organized as follows. After reviewing the necessary background on Higgs bundles, we describe the conformal plumbing construction which yields a    one-parameter family $\Sigma_{t}$, $t\in\C^{\times}$, of   Riemann  surfaces developing   a node $p\in\mathfrak p$ in the limit $t\to0$.   From a geometric point of view, endowing $\Sigma_{t}$ with the unique hyperbolic metric in its conformal class, it contains a long and thin hyperbolic cylinder $\mathcal C_t$   with central geodesic $c_t$  pinching off to  the point $p$ in the limit $t\to0$.  We describe a family of rotationally symmetric model solutions defined on finite cylinders $\mathcal C(R)$, $R=\left|t\right|$, in \textsection \ref{subsect:localmod}. The relevance of these local model solutions, in the case where the parameter $R=0$, is that by a result due to Biquard--Boalch (cf.~Lemma \ref{thm:asymptbehav}) any global solution on  the degenerate surface  $\Sigma_0$ is asymptotically close to it near the nodes. This fact allows us to construct approximate solutions on $\Sigma_{t}$ for every sufficiently small parameter $R=\left|t\right|>0$ from an exact solution on   $\Sigma_0$ by means of a gluing construction, cf.~\textsection \ref{subsect:approxsolutions}. In a final step, we employ a contraction mapping argument to  ``correct" these approximate solutions to exact ones. An analytic  difficulty arises here from the fact, typical for problems involving a ``stretching of necks", that the smallest eigenvalue $\lambda_1(R)$ of the relevant linearized operator $L_{R}$ converges to $0$ as $R\searrow0$. Hence the   limiting operator does not have a bounded inverse on $L^2$. We therefore need to control the rate of convergence to zero of   $\lambda_1(R)$, which  by an application of the Cappell--Lee--Miller gluing theorem is shown to be of order $\left|\log(R)\right|^{-2}$, cf.\ Theorem  \ref{thm:L2est}. To reach this conclusion we need to rule out the existence of eigenvalues of   $L_{R}$ of order less than $\left|\log(R)\right|^{-2}$ (the so called small eigenvalues). This requires a careful study of the deformation complex of the self-duality equations  associated with the singular solution $(A,\Phi)$ on $\Sigma_0$ and occupies a large part of \textsection \ref{subsect:cyloperators}. Subsequently, we show that the error terms coming from the approximate solutions are decaying to $0$ at    polynomial rate in $R$ as $R\searrow0$, which   allows for an application of a contraction mapping argument to prove the main theorem, cf.\ \textsection \ref {subsect:contractionmapping}.  Let us finally point out that there is one particular interesting instance of our gluing theorem in the context of  Michael Wolf's   Teichm\"uller theory of harmonic maps, which  is discussed in \textsection \ref{subsect:harmonicmaps}. 
\medskip\\
We leave it to a further publication to show a compactness result converse to the gluing theorem presented here, i.e.~to give conditions which assure that a given sequence of solutions $(A_i,\Phi_i)$ on a degenerating family of Riemann surfaces $(\Sigma,J_i)$ subconverges to a singular solution of the type discussed here. Once this analytical picture is completed, we can proceed further and study the more  geometric aspects  revolving around the family of complete hyperk\"ahler metrics on $\mathcal M(\Sigma,J_i)$ and its behavior  under   degeneration, hence paralleling the line of research initiated in \cite{msww14}.

\bigskip

\centerline{\textbf{Acknowledgements}}

\smallskip

The author wishes to thank Rafe Mazzeo, Hartmut Wei{\ss} and Frederik Witt for helpful comments and  useful discussions. He is grateful to the referees for their valuable comments, which helped to improve the exposition substantially.

%
%
\section{Moduli spaces of Higgs bundles}\label{sect:modulispaces}
In this section we review some relevant background material. A more complete introduction can be found, for example, 
in the appendix in~\cite{wgp08}.  For generalities on hermitian holomorphic vector bundles see ~\cite{ko87}. 
%

%
\subsection{Hitchin's equations}\label{sect:hitchineq}
Let $\Sigma$ be a smooth Riemann surface. We fix a hermitian vector bundle $(E,H)\to \Sigma$ of rank $2$ and degree $d(E)\in\Z$. The background hermitian metric $H$ will be used in an auxiliary manner; since any two  hermitian metrics are complex gauge equivalent the precise choice is immaterial. We furthermore fix a K\"ahler metric on $\Sigma$ such that the associated K\"ahler form $\omega$ satisfies $\int_{\Sigma}\omega=2\pi$. The main object of this article are moduli spaces of  solutions $(A,\Phi)$ of Hitchin's self-duality equations~\cite{hi87} 
\be
\label{hit.equ.uni}
\begin{array}{rcl}
F_A+[\Phi\wedge\Phi^*] & = & -i\mu(E)\operatorname{id}_E\omega, \\[0.4ex]
\delb_A\Phi & =& 0
\end{array}
\ee
for a unitary connection  $A\in\mc U(E)$ and  a {\em Higgs field} $\Phi \in \Omega^{1,0}(\End(E))$. We here denote by $\mu(E)=d(E)/2$ the slope of the rank-$2$ vector bundle $E$.
\medskip\\
The   group $\Gamma(\U(E))$ of unitary gauge transformations acts on  connections $A\in\mc U(E)$ in the usual way as $A\mapsto g^{\ast}A=g^{-1}Ag+g^{-1} dg$ and on  Higgs fields by conjugation $\Phi\mapsto g^{-1}\Phi g$.  Thus the  solution space of Eq.\ \eqref{hit.equ.uni} is preserved by $\Gamma(\U(E))$ acting diagonally on pairs $(A,\Phi)$. Moreover, the second equation  in Eq.\ \eqref{hit.equ.uni} implies that any solution $(A,\Phi)$ determines a {\em Higgs bundle} $(\delb,\Phi)$, i.e.\ a holomorphic structure $\delb=\delb_A$ on $E$ for which $\Phi$ is holomorphic: $\Phi\in H^0(\Sigma,\End(E)\otimes K_{\Sigma})$, $K_{\Sigma}$ denoting the canonical bundle of $\Sigma$.   Conversely, given a Higgs bundle $(\delb,\Phi)$, the operator $\delb$ can be augmented to a unitary connection $A$ such that the first Hitchin equation holds provided $(\delb,\Phi)$ is {\em stable}. Stability here means that $\mu(F)<\mu(E)$ for any nontrivial  $\Phi$-invariant holomorphic subbundle $F$, that is, $\Phi(F)\subset F\otimes K$.
\medskip\\
According to the Lie algebra splitting $\mf{u}(2)\cong \mf{su}(2)\oplus \mf{u}(1)$ into trace-free and pure trace summands, the bundle $\mf{u}(E)$
splits as $\mf{su}(E)\oplus i\underline{\R}$. Consequently, the curvature $F_A$ of a unitary connection  $A$ decomposes as
\begin{equation*}
	F_A = F_A^\perp+\frac{1}{2}\Tr(F_A)\otimes\operatorname{id}_E,
\end{equation*}
where $F_A^\perp\in\Omega^2(\mf{su}(E))$ is its   trace-free part  and $\frac{1}{2}\Tr(F_A) \otimes\operatorname{id}_E$ is the   pure trace  or   central part, see e.g.~\cite{po92}. Note that $\Tr(F_A) \in \Omega^2(i\underline{\R})$ equals to the curvature of the induced connection on $\det E$. From now, we fix a background connection $A_0\in\mc U(E)$   and consider only those connections $A$ which induce the same connection 
on $\det E$ as $A_0$ does. Equivalently, such a connection $A$ is of the form $A=A_0 + \alpha$ where $\alpha\in\Omega^1(\mf{su}(E))$, i.e.~$A$ is trace-free ``relative'' to $A_0$. Rather than Eq.\ \eqref{hit.equ.uni} we are from now on studying the slightly easier  system of equations 
\be\label{eq:hitequ}
\begin{array}{rcl}
F_A^\perp+[\Phi\wedge\Phi^*] & = & 0, \\[0.4ex]
\delb_A\Phi & =& 0
\end{array}
\ee
for $A$ trace-free relative to $A_0$ and a trace-free Higgs fields $\Phi\in \Omega^{1,0}(\mathfrak{sl}(E))$. There always exists a unitary connection $A_0$ on $E$ such that $\Tr F_{A_0}=-i\deg(E)\omega$. With this choice of a  background connection, any solution of Eq.\ \eqref{eq:hitequ} provides a solution to Eq.\ \eqref{hit.equ.uni}.  The relevant  groups of gauge transformations in this fixed determinant case are   $\Gamma(\SL(E))$ and $\Gamma(\SU(E))$, the former being the complexification of the latter, which we denote by $\mc G^c$ and $\mc G$ respectively.

\subsection{The degenerating family of Riemann surfaces}\label{subsect:degeneratingfamilies}
%

We recall the well-known plumbing construction for Riemann surfaces, our exposition here following largely \cite{ww92}. A {\bf{Riemann surface with nodes}} is  a  one-dimensional complex analytic space $\Sigma_0$ where each point has a neighborhood complex isomorphic to a disk $\{|z|<\epsilon\}$ or to $U=\{zw=0\mid \left|z\right|,\left|w\right|<\epsilon\}$, in which case it is called a node. A Riemann surface with  nodes arises from an unnoded surface by pinching of one or more  simply closed curves. Conversely, the effect of  the so-called conformal plumbing construction  is that it opens up a node by replacing the neighborhood $U$ by $\{zw=t\mid t\in\C,\left|z\right|,\left|w\right|<\epsilon\}$. To describe this construction in more detail, let $(\Sigma_0,z,p)$ be a Riemann surface of genus $g\geq2$ with conformal coordinate $z$ and a single node at $p$. Let $t\in\C\setminus\{0\}$ be fixed with $\left|t\right|$ sufficiently small. We then  define a smooth Riemann surface $\Sigma_t$ by removing the disjoint disks $D_t=\{\left|z\right|<\left|t\right|,\left|w\right|<\left|t\right|\}\subseteq U$ from $\Sigma_0$ and passing to the quotient space $\Sigma_t=(\Sigma_0\setminus D_t)/_{zw=t}$, which  is a Riemann surface of the same genus as $\Sigma_0$.   We allow for Riemann surfaces with a finite number of nodes, the set of which we denote $\mathfrak p=\{p_1,\ldots, p_k\}\subset \Sigma_0$, and impose the assumption that $ \Sigma_0\setminus  \mathfrak p$ consists of $k+1$ connected components
.  To deal with the case of multiple nodes in an efficient way we make the {\bf{convention}} that in the notation $\Sigma_t$ the dependence of  $t\in\C$ on the point  $p\in\mathfrak p$ is suppressed. The value of $t$ may be different at different nodes. Also, $\rho=\left|t\right|$ refers to the maximum of these   absolute values.   
\medskip\\
We endow each Riemann surface $\Sigma_t$ with a Riemannian metric compatible with its complex structure in the following way. Let $\rho=\left|t\right|<1$ and consider the annuli
\begin{equation}\label{eq:annuli}
R_{\rho}^+=\{z\in\C\mid \rho\leq\left|z\right|\leq 1\}\qquad\textrm{and}\qquad  R_{\rho}^-=\{w\in\C\mid \rho\leq\left|w\right|\leq 1\}.
\end{equation}
The above identification of $R_{\rho}^+$ and $R_{\rho}^-$ along their inner boundary circles $\{\left|z\right|=\rho\}$ and $\{\left|w\right|=\rho\}$ yields a smooth cylinder $C_{t}$. The Riemannian metrics 
\begin{equation*}
g^+=\frac{\left|dz\right|^2}{\left|z\right|^2},\qquad\textrm{respectively}\qquad g^-=\frac{\left|dw\right|^2}{\left|w\right|^2}
\end{equation*}
on $R_{\rho}^{\pm}$ induce a smooth metric on $C_{t}$, which we extend smoothly over $\Sigma_t$ to a metric compatible with the complex structure.  Note that the cylinder $C_{t}$ endowed with this metric   is flat. Indeed, the map $(r,\theta)\mapsto(\tau,\vartheta):= (-\log r,-\theta)$ provides an isometry between  $(R_{\rho}^+,g^+)$ and the standard flat cylinder $[0,-\log\rho]\times S^1$ with metric $d\tau^2+ d\vartheta^2$, and similarly for $(R_{\rho}^-,g^-)$.
\medskip\\
For  a closed Riemann surface $\Sigma$ recall    the space $\operatorname{QD}(\Sigma)$ of holomorphic quadratic differentials, which by definition is the $\C$-vector space of   holomorphic sections $q$ of the bundle $K_{\Sigma}^2$ of complex-valued symmetric bilinear forms. Its elements    are locally of the form $q=u\,dz^2$ for some holomorphic function $u$.  By the Riemann-Roch theorem, its complex dimension is $\dim \operatorname{QD}(\Sigma)=3(g-1)$, cf.~\cite[Appendix F]{tr92}. On a noded Riemann surface we will allow for quadratic differentials meromorphic  with poles of order at most $2$ at points in the subset $\mathfrak p\subset \Sigma$ of nodes.  In this case,  the corresponding $\C$-vector space of meromorphic quadratic differentials is denoted by $\operatorname{QD}_{-2}(\Sigma)$.
\medskip\\
Setting $\rho=0$ in Eq.\ \eqref{eq:annuli} yields for each $p\in\mathfrak p$ a punctured neighborhood $\mathcal C_0\subset\Sigma_0$ of $p$ consisting of two connected components $\mathcal C_0^{\pm}$. We endow these with   cylindrical coordinates $(\tau^{\pm},\vartheta^{\pm})$ as before.  Together with the above chosen Riemannian metric this turns $\Sigma_0$ into a manifold with cylindrical ends. We in addition fix a  hermitian vector bundle $(E,H)$  of rank $2$ over $\Sigma_0$, which we suppose is cylindrical in the sense to be described in \textsection \ref{subsect:cyloperators}. Briefly, this means that the restrictions of $(E,H)$ to  $\mathcal C_0^{\pm}$   are invariant under pullback by translations in the $\tau^{\pm}$-directions. We furthermore require that the restriction of  $(E,H)$ to $\mathcal C_0$ is invariant under pullback via the (orientation reversing) isometric involution $(\tau^{\pm},\vartheta^{\pm})\mapsto(\tau^{\mp},\operatorname{arg}t-\vartheta^{\mp})$ interchanging the two half-infinite cylinders $\mathcal C^+$ and $\mathcal C^-$. The pair $(E,H)$ induces a hermitian vector bundle on the surface $\Sigma_t$ by restriction, which extends smoothly over the cut-locus $\left|z\right|= \left|w\right|=\rho$.

\begin{figure}   
\label{bild1}                                                    
\includegraphics[width=1.1\textwidth]{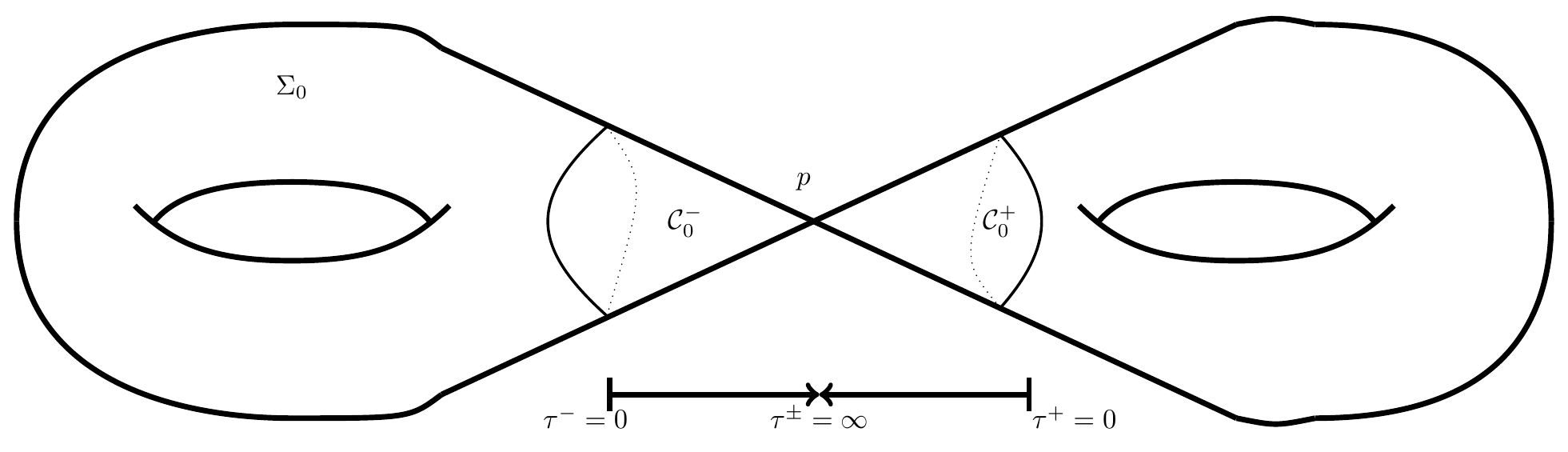}                         
\caption{Degenerate Riemann surface $\Sigma_0$ with one node $p\in\mathfrak p$, which separates the two half-infinite cylinders $\mathcal C_0^{\pm}$.}
\end{figure}

%
%
\subsection{The local model}\label{subsect:localmod}
 
Fix constants $\alpha\in\R$ and $C\in\C$. Then the pair
\begin{equation}\label{eq:modsol}
A^{\mod}=\begin{pmatrix}\alpha&0\\0&-\alpha\end{pmatrix}\left(\frac{dz}{z}-\frac{d\bar z}{\bar z}\right),\quad \Phi^{\mod}=\begin{pmatrix}C&0\\0&-C\end{pmatrix}\frac{dz}{z}
\end{equation}
provides a  solution on $\C^{\ast}$ to  Eq.\ \eqref{eq:hitequ}, which we call {\bf{model solution to parameters}} $(\alpha,C)$. It is smooth outside the origin and has a logarithmic (first-order) singularity in $z=0$, provided that $\alpha$ and $C$ do not both vanish. It furthermore restricts to a smooth solution on each of the annuli $R_{\rho}^{\pm}$ defined in Eq.\ \eqref{eq:annuli}. Note that the parameters encountered in the example of \textsection \ref{subsect:harmonicmaps} below are $(\alpha,C)=(0,\frac{\ell}{2i})$. We also remark that we do not consider the more general solution with connection
\begin{equation*}
A=\begin{pmatrix}\alpha&0\\0&-\alpha\end{pmatrix}\frac{dz}{z}-\begin{pmatrix}\bar\alpha&0\\0&-\bar\alpha\end{pmatrix}\frac{d \bar z}{\bar z}
\end{equation*}
for   parameter $\alpha\in\C$, because it is unitarily  gauge equivalent to the above model solution. Since
\begin{equation*}
\frac{dz}{z}-\frac{d\bar z}{\bar z}=2i\,d\theta,
\end{equation*}
the connections $A^{\mod}$ appearing in Eq.\ \eqref{eq:modsol} are in radial gauge, i.e.\ their $dr$-components vanish identically. For constants $t\in\C$ and $\rho=\left| t\right|$ such that $0<\rho<1$ let $\mathcal C_t$ denote the complex cylinder obtained from gluing the two annuli $R_{\rho}^-$ and $R_{\rho}^+$. Since
\begin{equation*}
\frac{dz}{z}=-\frac{dw}{w}
\end{equation*} 
the two model solutions $(A_+^{\mod},\Phi_+^{\mod})$  to parameters $(\alpha,C)$ over $R_{\rho}^+$ and  $(A_-^{\mod},\Phi_-^{\mod})$ to parameters $(-\alpha,-C)$ over $R_{\rho}^-$ glue to a smooth solution $(A^{\mod},\Phi^{\mod})$ on $\mathcal C_t$, again called model solution to parameters $(\alpha,C)$. In the following it is always assumed that $\alpha>0$.

\subsection{Analytical setup}\label{subsect:parabHiggs}
%

We  introduce weighted Sobolev spaces of connections and gauge transformations over the punctured Riemann surface $\Sigma_0$, following the analytical setup of \cite{bibo04}.
\medskip\\
Let $r$ be a strictly positive function on $\Sigma_0$ such that  $r=\left|z\right|$ near the subset  $\mathfrak p\subset\Sigma_0$ of nodes. The weighted Sobolev spaces to be introduced next are all defined with respect to the measure $r\,dr\,d\theta$ on $\C$ which we abbreviate as $r\,dr$. Note that later on we will have cause to  use also the more singular measure $r^{-1}\,dr\,d\theta$.  For a weight $\delta\in\R$ we define the $L^2$ based weighted Sobolev spaces 
\begin{equation*}
L_{\delta}^2=\left\{u\in L^2(r\,dr)\mid r^{-\delta-1}u\in L^2(r\,dr)\right\}
\end{equation*}
and
\begin{equation*}
H_{\delta}^k =\left\{ u, \nabla^ju\in L_{\delta}^2(r\,dr), 0\leq j\leq k\right\},
\end{equation*}
with the unitary background connection $\nabla$ on $E$ being fixed throughout. Note that the function $r^{\beta}\in L_{\delta}^2$ if and only if $\beta>\delta$. The spaces $H_{\delta}^k$ satisfy a number of standard Sobolev embedding and multiplication theorems such as 
\begin{equation*}
H_{-2+\delta}^k\cdot H_{-2+\delta}^k\subseteq H_{-2+\delta}^k\quad (k\geq2),\qquad H_{-2+\delta}^2\cdot H_{-2+\delta}^1\subseteq H_{-2+\delta}^1, 
\end{equation*}
cf.~\cite[\textsection 3]{bibo04} for details. 
\medskip\\
From now on let a constant $\delta>0$ be fixed. We shall work with the  spaces of unitary connections and Higgs fields
\begin{equation*}
\mathcal A_{-2+\delta}^1=\left\{A^{\mod}+\alpha\mid \alpha\in H_{-2+\delta}^1(\Omega^1\otimes \mathfrak{su}(E))\right\}
\end{equation*}
and
\begin{equation*}
\mathcal B_{-2+\delta}^1=\left\{\Phi^{\mod}+\varphi\mid \varphi\in H_{-2+\delta}^1(\Omega^{1,0}\otimes \mathfrak{sl}(E))\right\},
\end{equation*}
where $(A^{\mod},\Phi^{\mod})$ is some singular model solution on $\Sigma_0$ as introduced in Eq.\ \eqref{eq:modsol}. It is important to note that we have to allow for  arbitrary small weights $\delta>0$  as  the example   discussed in \textsection\ref{subsect:harmonicmaps} shows. These spaces are acted on smoothly by the Banach Lie group
\begin{equation*}
\mathcal G=\mathcal G_{-2+\delta}^2=\left\{ g\in\SU(E)\mid g^{-1}dg\in H_{-2+\delta}^1(\Omega^1\otimes \mathfrak{su}(E))\right\}
\end{equation*}
of special unitary gauge transformations, where
\begin{equation*}
g^{\ast}(A,\Phi)=(g^{-1}Ag+g^{-1}dg,g^{-1}\Phi g)
\end{equation*}
for $(A,\Phi)\in\mathcal A_{-2+\delta}^1\times\mathcal B_{-2+\delta}^1$, cf.\ \cite[Lemma 2.1]{bibo04}. The complexification of $\mathcal G_{-2+\delta}^2$ is the Banach Lie group
\begin{equation*}
\mathcal G^c=\mathcal G_{-2+\delta}^{2,c}=\left\{ g\in\SL(E)\mid g^{-1}dg\in H_{-2+\delta}^1(\Omega^1\otimes \mathfrak{sl}(E))\right\}.
\end{equation*}
The moduli space   of key interest in this article  is then defined to be  the quotient
\begin{equation}\label{eq:defmodspace}
\mathcal M(\Sigma_0)=\frac{\left\{(A,\Phi)\in\mathcal A_{-2+\delta}^1\times\mathcal B_{-2+\delta}^1\mid (A,\Phi)\;\textrm{satisfies Eq.}\;\eqref{hit.equ.uni}\right\}}{\mathcal G_{-2+\delta}^2},
\end{equation}
along with  the analogously defined spaces $\mathcal M(\Sigma_{t})$ for parameter $t\neq0$.

\subsection{Guiding example:  Teichm\"uller theory of harmonic maps}\label{subsect:harmonicmaps}
%

Due to Hitchin, there is a close relation between Teich\-m\"ul\-ler theory and the moduli space of $\SL(2,\R)$-Higgs bundles which we recall next. Let $\Sigma$ be a Riemann  surface of genus $g$ with associated canonical bundle $K_{\Sigma}\cong \Omega^{1,0}(\Sigma)$. Let $L$ be a holomorphic line bundle of  degree $g-1$ such that $L^2\cong K_{\Sigma}$ and set $E=L\oplus L^{-1}$.   With respect to this splitting we define the Higgs field
\begin{equation*}
\Phi=\begin{pmatrix}0&q\\1&0\end{pmatrix}\in\Omega^{1,0}(\Sigma,\mathfrak{sl}(E))
\end{equation*}
for some  fixed   holomorphic quadratic differential
\begin{equation*}
q\in\Omega^{1,0}(\Sigma,\Hom(L^{-1},L))\cong\operatorname{QD}(\Sigma)
\end{equation*}
and the constant function $1\in\Omega^{1,0}(\Sigma,\Hom(L,L^{-1}))\cong \Omega^0(\Sigma,\C)$. Clearly, $\bar\partial_E\Phi=0$. We also  endow the holomorphic line bundle $L$ with an auxiliary hermitian metric  $h_0$. It induces on $E$ the hermitian metric $H_0=h_0\oplus h_0^{-1}$; let  $A=A_L\oplus A_{L^{-1}}$ denote the associated Chern connection. As one can show,    the Higgs bundle $(E,\Phi)$ is stable in the terminology of \cite{hi87}. Therefore, there exists a complex gauge transformation  $g\in\mathcal G^c$, unique up to modification by a unitary gauge transformation, such that $(A_1,\Phi_1):=g^{\ast}(A,\Phi)$ is a solution of   Eq.\ \eqref{eq:hitequ}. We argue that in this particular case the self-duality equations reduce to a scalar PDE on $\Sigma$. 
\medskip\\
To this aim, we first  modify $\Phi_1$ by a locally defined unitary gauge transformation $k$ such that the Higgs field  $\Phi_2:=(gk)^{-1}\Phi gk$ has again off-diagonal form. Set $g_1:=gk$ and $A_2:=g_1^{\ast}A$. By invariance of the second of the self-duality equations under complex gauge transformations,  $\bar\partial_{A_2} \Phi_2=0$. This equation can only hold true if the connection $A_2$ is diagonal  for otherwise  a nonzero diagonal term  would arise. 
One can furthermore check that the local complex gauge transformation $g_1$ mapping $(A,\Phi)$ to $ (A_2,\Phi_2)$ is diagonal, and hence so  is the hermitian metric $H_1:=H_0(g_1g_1^{\ast})^{-1}= H_0(g^{\ast})^{-1} (k^{\ast})^{-1}k^{-1}g^{-1}=H_0(g^{\ast})^{-1}g^{-1}$. Since this expression is independent of the above chosen local gauge transformation $k$ we conclude that $H_1$ is globally well-defined and diagonal with respect the holomorphic splitting of $E$ into line bundles. We set $H_1=h_1\oplus h_1^{-1}$, where  $h_1=e^{2u}h_0$ for some smooth function $u\colon\Sigma\to\R$.  As a standard fact, the connection $A_1$ is the Chern connection with respect to the hermitian metric $H_1$. It  thus  splits into   $A_1=A_{1,L}\oplus A_{1,L^{-1}}$. The component   $A_{1,L}$  has curvature $F_{A_{1,L}}=F_{A_L}-2\bar\partial\partial u$. Furthermore, the first diagonal entry of the  commutator term $[g^{-1}\Phi g \wedge(g^{-1}\Phi g)^{\ast}]$ equals to $e^{4u}h_0^2q\bar q-e^{-4u}h_0^{-2}$. Hence the first of the self-duality equations   reduces to the scalar PDE
\begin{equation}\label{eq:exselfdual}
 2\bar\partial\partial u-e^{4u}h_0^2q\bar q+e^{-4u}h_0^{-2}- F_{A_L}=0
\end{equation}
for the function  $u$.
\medskip\\
A   calculation furthermore shows that Eq.\ \eqref{eq:exselfdual}  is satisfied if and only if the Riemannian metric
\begin{equation}\label{eq:connstcurvmetr}
G=q+(h_1^{-2}+h_1^2q\bar q)+\bar q\in \operatorname{Sym}^2(\Sigma)
\end{equation}
has constant Gauss curvature equal to $-4$, cf.\ \cite{hi87,dw07} for further details. From a different perspective, one can interpret Eq.\ \eqref{eq:exselfdual} as the condition that the identity map   between the surface $\Sigma$ with its given conformal structure and the   surface $(\Sigma,G)$ is harmonic. This point of view has been pursued by Wolf in \cite{wo89},  where it was shown that  for every $q\in\operatorname{QD}(\Sigma)$ there is a unique solution of Eq.\ \eqref{eq:exselfdual}, therefore leading to a different proof of Teich\-m\"ul\-ler's well-known theorem stating that the Teich\-m\"ul\-ler moduli space $\mathcal T_g$ is diffeomorphic to a cell.   Wolf in \cite{wo91} then studied the behavior of this harmonic map under degeneration of the domain Riemann surface $\Sigma$ to a noded surface $\Sigma_0$. Let us now describe those aspects of his theory which are the most relevant to us.
\medskip\\
Let $\mathcal C=(S^1)_x\times[1,\infty)_y$ denote the half-infinite cylinder, endowed with the complex coordinate $z=x+iy$ and flat Riemannian metric $g_{\mathcal C}=\left|dz\right|^2=dx^2+dy^2$. Furthermore, for parameter $\ell>0$ let 
\begin{equation*}
N_{\ell}=[\ell^{-1}\csc^{-1}(\ell^{-1}),\pi/\ell-\ell^{-1}\csc^{-1}(\ell^{-1})]_u\times (S^1)_v
\end{equation*}
be the finite cylinder with complex coordinate $w=u+iv$. It carries the hyperbolic metric $g_{\ell}=\ell^2\csc^2(\ell u)\left|dw\right|^2$. In \cite{wo91}, Wolf discusses the one-parameter family of infinite-energy harmonic maps
\begin{equation*}
w_{\ell}\colon (\mathcal C,\left|dz\right|^2)\to (N_{\ell},g_{\ell}),\qquad w_{\ell}=u_{\ell}+iv_{\ell},
\end{equation*}
where
\begin{equation*}
v_{\ell}(x,y)=x,\qquad u_{\ell}(x,y)=\frac{1}{\ell}\sin^{-1}\left(\frac{1-B_{\ell}(y)}{1+B_{\ell}(y)}\right),
\end{equation*}
and
\begin{equation*}
B_{\ell}(y)=\frac{1-\ell}{1+\ell}e^{2\ell(1-y)}.
\end{equation*}
It serves as a model for harmonic maps with domain a noded Riemann surface and target  a smooth Riemann surface containing a long hyperbolic  ``neck" with central geodesic of length $2\pi\ell$. Indeed, Wolf (cf.~\cite[Proposition 3.8]{wo91}) shows that the unique harmonic such map $w_{\ell}$ homotopic to the identity  is exponentially close to the above model harmonic map. The pullback to $\mathcal C$ of the metrics $g_{\ell}$ yields the family of hyperbolic metrics
\begin{eqnarray*}
G_{\ell}&=&\ell^2\left(\frac{1+B_{\ell}}{1-B_{\ell}}\right)^2\left(dx^2+\frac{4B_{\ell}}{(1+B_{\ell})^2}dy^2\right)\\
&=&\ell^2\left(\frac{1+B_{\ell}}{1-B_{\ell}}\right)^2\left(\left(\frac{1}{4}-\frac{B_{\ell}}{(1+B_{\ell})^2}\right)\,dz^2+\left(\frac{1}{2}+\frac{2B_{\ell}}{(1+B_{\ell})^2}\right)\,dz\,d\bar z\right.\\
&&\left.+\left(\frac{1}{4}-\frac{B_{\ell}}{(1+B_{\ell})^2}\right)\,d\bar z^2\right).
\end{eqnarray*}
Since $w_{\ell}$ is  harmonic, the $dz^2$ component $q_{\ell}$ of $G_{\ell}$ is a holomorphic quadratic differential on $\mathcal C$. In the framework described above, it is induced by the Higgs field
\begin{equation*}
\Phi_{\ell} =\begin{pmatrix}0&q_{\ell}\\
1&0\end{pmatrix}. 
\end{equation*}
We choose a local holomorphic trivialization of $E$ and suppose that with respect to it the auxiliary hermitian metric $h_0$ is the standard hermitian metric on $\C^2$. Comparing the metric $G_{\ell}$ with the one in Eq.\ \eqref{eq:connstcurvmetr} it follows that the hermitian metric $h_{1,\ell}$ on $L$ in this particular example is
\begin{equation*}
h_{1,\ell}=\frac{2}{\ell} \frac{1-B_{\ell}^{\frac{1}{2}}}{1+B_{\ell}^{\frac{1}{2}}}.
\end{equation*}
The corresponding hermitian metric on $E=L\oplus L^{-1}$ is 
\begin{equation*}
H_{1,\ell}=\begin{pmatrix}h_{1,\ell} &0\\0&h_{1,\ell}^{-1}\end{pmatrix}\end{equation*}
and thus any complex gauge transformation
\begin{equation*}
g_{\ell}=\begin{pmatrix}e^{-u_{\ell}}&0\\0&e^{u_{\ell}}\end{pmatrix}
\end{equation*}
satisfying $g_{\ell}^2=H_{1,\ell}^{-1}$ gives rise to a solution of Eq.\ \eqref{eq:exselfdual}, as one may check by direct calculation. With respect to the above chosen  local holomorphic trivialization of $E$, the Chern connection $A$   becomes the trivial connection, and thus the solution of the self-duality equations corresponding to the Higgs pair $(A,\Phi_{\ell})$ equals $(A_{1,\ell},\Phi_{1,\ell})=g_{\ell}^{\ast}(0,\Phi_{\ell})$.
We change complex coordinates to
\begin{equation*}
\zeta=e^{iz},\qquad i\,dz=\frac{d\zeta}{\zeta},
\end{equation*}
so to make the resulting expressions easier to compare with the setup in \textsection \ref{sect:localmod}. Under this coordinate transformation the cylinder $\mathcal C$ is mapped conformally to the punctured unit disk. We then obtain that
\begin{equation*}
A_{1,\ell} =\frac{\ell}{2}\frac{B_{\ell}^{\frac{1}{2}}}{1-B_{\ell}}  \begin{pmatrix}1&0\\0&-1\end{pmatrix}\left(\frac{d\zeta}{\zeta}-\frac{d\bar \zeta}{\bar \zeta}\right)
\end{equation*}
and
\begin{equation*}
\Phi_{1,\ell} =\begin{pmatrix}0&\frac{\ell^2}{4}h_{1,\ell}\\
h_{1,\ell}^{-1}&0\end{pmatrix} \frac{d\zeta}{i\zeta}.
\end{equation*}
Since $B_{\ell}(\zeta)= \frac{1-\ell}{1+\ell}e^{2\ell}|\zeta|^{2\ell}$ and $h_{\ell}=\frac{2}{\ell}(1+\mathcal O(|\zeta|^{\ell})$,  it follows that
\begin{equation*}
A_{1,\ell}=\mathcal O(|\zeta|^{\ell})\begin{pmatrix}1&0\\0&-1\end{pmatrix}\left( \frac{d\zeta}{\zeta}-\frac{d\bar \zeta}{\bar \zeta}\right),\qquad  \Phi_{1,\ell}=(1+\mathcal O(|\zeta|^{\ell})\begin{pmatrix}0&\frac{\ell}{2} \\
\frac{\ell}{2}&0\end{pmatrix} \frac{d\zeta}{i\zeta}.
\end{equation*}
Therefore, after a unitary change of frame, the Higgs field $\Phi_{1,\ell}$ is asymptotic to the model Higgs field  
\begin{equation*}
\Phi_{\ell}^{\mod}=\begin{pmatrix}\frac{\ell}{2}&0\\0&-\frac{\ell}{2}\end{pmatrix}\frac{d\zeta}{i\zeta},
\end{equation*}
while the connection $A_{1,\ell}$ is asymptotic to the trivial flat connection. This ends the discussion of the example.

%
%
\section{Approximate solutions}\label{sect:localmod}

\subsection{Approximate solutions}\label{subsect:approxsolutions}
%

Throughout this section, we  fix a constant $\delta>0$ and a solution $(A,\Phi)\in \mathcal A_{-2+\delta}^1\times \mathcal B_{-2+\delta}^1$ of    Eq.\ \eqref{eq:hitequ} on the noded Riemann surface $\Sigma_0$. It represents a point in the moduli space $\mathcal M(\Sigma_0)$ as defined through Eq.\ \eqref{eq:defmodspace}. Our immediate next goal is to construct from $(A,\Phi)$  a good approximate solution on each ``nearby'' surface $\Sigma_t$ (where $\rho=\left|t\right|$ is small). We do so by  interpolating between the model solution $(A^{\mod},\Phi^{\mod})$ on each punctured cylinder $\mathcal C_p(0)$, $p\in\mathfrak p$,   and   the exact solution $(A,\Phi)$ on the ``thick" region of $\Sigma_0$, i.e.~the complement of the union  $\bigcup_{p\in\mathfrak p}\mathcal C_p(0)$. We therefore obtain an approximate solution on $\Sigma_0$, which induces one on each of the surfaces  $\Sigma_t$ obtained from $\Sigma_0$ by performing the conformal plumbing construction of \textsection \ref{subsect:degeneratingfamilies}. We also rename the  cylinder $\mathcal C_t$ obtained in the plumbing step at $p\in\mathfrak p$ (cf.\ Eq.\ \eqref{eq:annuli}) to $\mathcal C_p(\rho)$ to indicate that its length is asymptotically equal to $\left|\log\rho\right|$.
\medskip\\
Near each node $p\in\mathfrak p$ we  choose a neighborhood $U_p$ complex   isomorphic to  $\{zw=0\}$, cf.~\textsection \ref{subsect:degeneratingfamilies}. Since $(A,\Phi)$ satisfies the second equation in Eq.\ \eqref{eq:hitequ} it follows that $q:=\det\Phi$ is a meromorphic quadratic differential, its set of  poles being contained in $\mathfrak p$. As we are here only considering model solutions with poles of order one,   the poles of $q$ are  at most of order two, hence $q$ is an element of $\operatorname{QD}_{-2}(\Sigma_0)$.  After a holomorphic change of coordinates, we may assume that  near $z=0$ (and similarly near $w=0$) the meromorphic quadratic differential $q=\det \Phi$  is in the standard form 
\begin{equation}\label{eq:holQD}
q=-C_{p,+}^2\frac{dz^2}{z^2}
\end{equation}
for some constant $C_{p,+}\in\C$, and similarly near $w=0$ for some constant $C_{p,-}$. By definition of the space $\mathcal A_{-2+\delta}^1\times \mathcal B_{-2+\delta}^1$ there exists for each $p\in\mathfrak p$ a model solution  $(A_{p,\pm}^{\mod},\Phi_{p,\pm}^{\mod})$ such that (after applying a suitable unitary gauge transformation in $ \mathcal G$ if necessary) the solution $(A,\Phi) $ is asymptotically close to it near $p$. This model solution takes the form  
\begin{equation*}
A_{p,+}^{\mod}=\begin{pmatrix}\alpha_{p,+}&0\\0&-\alpha_{p,+}\end{pmatrix}\left(\frac{dz}{z}-\frac{d\bar z}{\bar z}\right)
\end{equation*}
for   some constant $\alpha_{p,+}\in\R $ and 
\begin{equation*}
\Phi_{p,+}^{\mod}=\begin{pmatrix}C_{p,+}&0\\0&-C_{p,+}\end{pmatrix} \frac{dz}{z}, 
\end{equation*}
where  the constant $C_{p,+}\in\C  $ is  as above (and similarly for $(A_{p,-}^{\mod},\Phi_{p,-}^{\mod})$). To be able to carry out the construction of approximate solutions, we need to  impose  on  $(A_{p,\pm}^{\mod},\Phi_{p,\pm}^{\mod})$  the     {\bf{assumptions (A1--A3)}} as stated in the introduction. Let us recall them here and   discuss their significance.
\begin{itemize}
\item[{\bf{(A1)}}]
The constants $C_{p,\pm}\neq0$ for every $p\in\mathfrak p$.  This is a generic condition on the   meromorphic quadratic differential $q$ in Eq.\ \eqref{eq:holQD}. It is imposed here to assure that near $z=0$ (respectively near $w=0$) the Higgs field $\Phi$ is of the form $\Phi(z)=\varphi(z)\frac{dz}{z}$ for some \emph{diagonalizable} endomorphism $\varphi(z)$. We use this assumption in Proposition \ref{prop:diaggauge} to transform $(A,\Phi)$ via a complex gauge transformation $g$ (which we   show can be chosen to be close to the identity) into $g^{\ast}(A,\Phi)$, where $g^{\ast}A$ is diagonal and $g^{-1}\Phi g$ coincides with $\Phi_{p,\pm}^{\mod}$.  For a second time,  the assumption (A1) is used in the proof of Proposition \ref{prop:asympttrace}. Together with assumption (A3) it implies injectivity  of the operator $L_1+L_2^{\ast}$ associated with the deformation complex of the self-duality equations at the solution $(A,\Phi)$.
\item[{\bf{(A2)}}]
The constants $(\alpha_{p,+},C_{p,+})$ and $(\alpha_{p,-},C_{p,-})$ satisfy the matching conditions $\alpha_{p,+}=-\alpha_{p,-}$ and  $C_{p,+}=-C_{p,-}$ for every $p\in\mathfrak p$.   
 Thus in particular, the coefficients of $z^{-2}$ and $w^{-2}$ in Eq.\ \eqref{eq:holQD} coincide. As discussed in \textsection \ref{subsect:localmod},  this matching condition permits us to construct from any pair of  singular model solutions $(A_{p,\pm}^{\mod},\Phi_{p,\pm}^{\mod})$ on the cylinder $\mathcal C_p(\rho)$  a   smooth model solution  using the conformal plumbing construction. 
\item[{\bf{(A3)}}]
The meromorphic quadratic differential $q=\det\Phi$ has at least one simple zero. This is a generic assumption on a  meromorphic quadratic    differential. It weakens the one imposed in  \cite{msww14}, where it was required that {\emph{all}} zeroes of $q$ are simple. As a consequence, it was shown there that     solutions $(A,\Phi)$ to the self-duality equations are necessarily irreducible (cf.\ \cite[p.\ 7]{msww14}).  The assumption (A3) is used here in a very similar way, namely to prove that the operator $L_1+L_2^{\ast}$  arising in the deformation complex of the self-duality equations is injective, cf.\ Lemma \ref{lemma:limitkernelnew}. This in particular implies injectivity of the operator $L_1$ and therefore irreducibility of the solution $(A,\Phi)$.
\end{itemize}
From now on, we rename the model solution $(A_{p,\pm}^{\mod},\Phi_{p,\pm}^{\mod})$ to $(A_{p}^{\mod},\Phi_{p}^{\mod})$ and the constants $C_{p,+}$ and $\alpha_{p,+}$ to $C_{p}$ and $\alpha_{p}$, respectively. Recall that  the solution $(A,\Phi)$ (after modifying it by a suitable unitary gauge transformation if necessary) differs from $(A_{p}^{\mod},\Phi_{p}^{\mod})$ by some element in $H_{-2+\delta}^1$. However,  according to Biquard-Boalch \cite{bibo04} it is asymptotically close to it in a much stronger sense.

\begin{lemma}\label{thm:asymptbehav}
Let $(A,\Phi)$ be a  solution of Eq.\ \eqref{eq:hitequ} and for each $p\in\mathfrak p$ let    $(A_{p}^{\mod},\Phi_{p}^{\mod})$ be the  pair of model solutions as before. Then there exists a unitary gauge transformation $g\in \mathcal G$      such that in  some  neighborhood of each point $p\in\mathfrak p$   it holds that 
\begin{equation*}
g^{\ast}(A,\Phi)=(A_{p}^{\mod}+\alpha,\Phi_{p}^{\mod}+\varphi), 
\end{equation*}
where $(\alpha,\varphi),(r\nabla\alpha,r\nabla\varphi),(r^2\nabla^2\alpha,r^2\nabla^2\varphi)\in C_{-1+\delta}^0$.    
\end{lemma}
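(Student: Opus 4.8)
The plan is to reduce the statement to a decay estimate for the difference $(\alpha,\varphi) := g^{*}(A,\Phi) - (A_p^{\mod},\Phi_p^{\mod})$ on a punctured disk, and then to quote the relevant estimate of Biquard--Boalch \cite{bibo04}. First I would fix a node $p\in\mathfrak p$ and restrict attention to a punctured coordinate neighborhood $\{0<|z|<\epsilon\}$. Since by hypothesis $(A,\Phi)$ lies in $\mathcal A_{-2+\delta}^1\times\mathcal B_{-2+\delta}^1$, we already know that after applying some unitary gauge transformation there is a model solution $(A_p^{\mod},\Phi_p^{\mod})$ with $g^{*}(A,\Phi)-(A_p^{\mod},\Phi_p^{\mod})\in H_{-2+\delta}^1$; the point of the lemma is to upgrade this $H^1$-smallness to pointwise decay of the field and its first two covariant derivatives, with the natural weights $r,r^2$ on the derivatives reflecting the scale-invariance of the cylindrical end. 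The key input is that $(A_p^{\mod},\Phi_p^{\mod})$ is an exact solution with \emph{diagonalizable} (indeed diagonal, semisimple, nonzero by (A1)) residue, so the self-duality equations linearized at the model have no indicial roots in the critical window, and the Biquard--Boalch asymptotic analysis applies verbatim.

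The steps I would carry out are: (i) write the self-duality equations \eqref{eq:hitequ} for $(A_p^{\mod}+\alpha,\Phi_p^{\mod}+\varphi)$ as $\mathcal L(\alpha,\varphi) = Q(\alpha,\varphi)$, where $\mathcal L$ is the linearization of \eqref{eq:hitequ} at the model (coupled with a Coulomb-type gauge-fixing condition relative to the model, which we may impose since $g$ is only determined up to the residual gauge group) and $Q$ collects the quadratic terms $[\varphi\wedge\varphi^{*}]$ and the connection nonlinearities; (ii) observe that on the cylindrical end, in the coordinates $(\tau,\vartheta)$ with $r=e^{-\tau}$, the operator $\mathcal L$ becomes an elliptic operator of the form $\partial_\tau + D$ with $D$ a first-order self-adjoint operator on the circle whose spectrum is discrete and — because the model residue is semisimple and nonzero, cf.\ \cite[\textsection 3]{bibo04} — avoids the weight line $\delta$; (iii) invoke the standard parametrix/indicial-root argument (this is exactly \cite[Lemma 2.2 and the surrounding discussion]{bibo04}, or \cite[\textsection 3]{bibo04}) to conclude that a solution in $H_{-2+\delta}^1$ of such an equation lies in the stronger weighted space $C_{-1+\delta}^0$ together with its weighted derivatives $r\nabla$, $r^2\nabla^2$; (iv) bootstrap using elliptic regularity on annuli $\{2^{-k-1}\epsilon\le|z|\le 2^{-k}\epsilon\}$ rescaled to unit size, feeding the $C^0$-bound into the Schauder estimate for $\mathcal L$ to control $r\nabla(\alpha,\varphi)$ and then $r^2\nabla^2(\alpha,\varphi)$, the quadratic remainder $Q$ being harmless since it is of higher weight. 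Finally one patches the finitely many nodes together into a single global gauge transformation $g\in\mathcal G$, using that away from $\mathfrak p$ the identity already works.

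The main obstacle is step (ii)--(iii): one must be sure that the weight $\delta>0$, which we are forced to allow to be arbitrarily small (as the harmonic-maps example of \textsection\ref{subsect:harmonicmaps} shows, where the indicial root is $\ell$ and can be tiny), still lies strictly between the indicial roots of $\mathcal L$ at the model, so that no resonance obstructs the improvement from $H^1$ to weighted $C^0$. This is precisely where assumptions (A1) (nonvanishing, hence semisimple residue $C_p\ne 0$) enters: it guarantees the model Higgs field has distinct eigenvalues $\pm C_p$, the indicial operator is invertible on the relevant weight line for all small $\delta>0$, and the critical exponents are bounded away from $0$ uniformly. Once this spectral fact is in place, the rest is the routine weighted-elliptic machinery of \cite{bibo04}, and indeed the cleanest write-up is simply to verify that our $(A,\Phi)$ and $(A_p^{\mod},\Phi_p^{\mod})$ satisfy the hypotheses on p.\ 181 of \cite{bibo04} and then cite their asymptotic expansion.
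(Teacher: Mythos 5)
Your proposal follows essentially the same route as the paper: the statement is reduced to the Biquard--Boalch asymptotic result (\cite[Lemma 5.3]{bibo04}), and the weighted bounds on $r\nabla(\alpha,\varphi)$ and $r^2\nabla^2(\alpha,\varphi)$ are then obtained by bootstrapping along the cylindrical end --- the paper does this by differentiating the equation $Lb=b\odot b$ satisfied by $b=(\alpha,\varphi)$ and using weighted $L^p$ elliptic regularity together with the embedding $L^{1,p}_{-3+2\delta}\hookrightarrow C^0_{-3+2\delta+2/p}$, while you use Schauder estimates on rescaled annuli, an immaterial difference. One caveat: assumption (A1) plays no role in this lemma (it enters only later, in Propositions \ref{prop:diaggauge} and \ref{prop:asympttrace}); what the Biquard--Boalch machinery needs here is just the standing hypothesis that the model solutions are in diagonal form and that $(A,\Phi)$ lies in $\mathcal A^1_{-2+\delta}\times\mathcal B^1_{-2+\delta}$ by definition of the configuration space, and since the decay exponent in the conclusion is the same $\delta$ as in the hypothesis, no resonance-free window or uniform lower bound on the indicial roots is required --- indeed $0$ is always an indicial root in the diagonal directions regardless of $C_p\neq0$, so the ``main obstacle'' you single out is not actually load-bearing.
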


\begin{proof}
For a proof of the first statement we refer to \cite[Lemma 5.3]{bibo04}. The statement that $(r\nabla\alpha,r\nabla\varphi),(r^2\nabla^2\alpha,r^2\nabla^2\varphi)\in C_{-1+\delta}^0$ follows similarly. However, since it is not carried out there, we add its proof for completeness. From \cite[Lemma 5.3]{bibo04} we get the existence of a unitary gauge transformation $g\in \mathcal G$ such that $g^{\ast}(A,\Phi)=(A_{p}^{\mod}+\alpha,\Phi_{p}^{\mod}+\varphi)$, where $b:=(\alpha,\varphi)$ satisfies the equation
\begin{equation*}
Lb=b \odot	b.
\end{equation*}
 Here $L$ is a first-order elliptic differential operator with constant coefficients with respect to cylindrical coordinates $(t,\vartheta)=(-\log r,-\theta)$, while   $b \odot	b$ denotes some bilinear combination of the function $b$ with constant coefficients. Differentiating this equation we obtain that
\begin{equation}\label{eq:nablab}
L(\nabla b)=b \odot	\nabla b+ [L,\nabla]b.
\end{equation}
We now follow the line of argument in  \cite[Lemma 4.6]{bibo04} to prove the claim. There it is shown that   $b\in L_{-2+\delta}^{1,p}$ for any $p>2$, with the weighted   spaces $L_{\delta}^{k,p}$ being defined in analogy to $H_{\delta}^k$. Hence $\nabla b\in L_{-2+\delta}^p$ which together with $b\in C_{-1+\delta}^0$ yields the inclusion $b \odot	\nabla b \in L_{-3+2\delta}^p$. Similarly, $[L,\nabla]b \in C_{-1+\delta}^0\subseteq L_{-3+2\delta}^p$ since   $[L,\nabla]$ is an operator of order zero with constant coefficients. It thus follows from Eq.\ \eqref{eq:nablab} and elliptic regularity   that $\nabla b \in L_{-3+2\delta}^{1,p}$. For any $p>2$ we have the  continuous embedding $L_{-3+2\delta}^{1,p}\hookrightarrow C_{-3+2\delta+2/p}^0$ (cf.\ \cite[Lemma 3.1]{bibo04}) which we apply with $p=\frac{2}{1-\delta}$ to conclude that $\nabla b \in C_{-2+\delta}^0$. Hence $r\nabla b \in C_{-1+\delta}^0$. The remaining claim   that $r^2\nabla^2 b \in C_{-1+\delta}^0$ is finally shown along the same lines. Differentiating Eq.\ \eqref{eq:nablab} leads to the equation
\begin{equation*}\label{eq:nablab1}
L(\nabla^2 b)=\nabla b \odot	\nabla b+b\odot\nabla^2b+ [L,\nabla]	\nabla b+ [\nabla L,\nabla]	 b.
\end{equation*}
By inspection, using the regularity results already proven, we have that the right-hand side is contained in $L_{-4+2\delta}^p$ for any $p>2$. Elliptic regularity yields  $\nabla^2 b\in  L_{-4+2\delta}^{1,p}$. Since for $p>2$ the latter space embeds continuously into $C_{-4+2\delta+2/p}^0$ it follows as before that  $r^2\nabla^2 b\in C_{-1+\delta}^0$, as claimed.
\end{proof}

From now on, we rename the solution $g^{\ast}(A,\Phi)$ of Lemma \ref{thm:asymptbehav} to $(A,\Phi)$. We refer to Eq.\ \eqref{eq:defspaceHk} below for the definition of the   space $H_{-1+\delta'}^2(r^{-1}\,dr)$ used in the next lemma.

\begin{lemma}\label{lem:normformlem}
Let $(A,\Phi)$ be a  solution of Eq.\ \eqref{eq:hitequ} as above and $\delta>0$ be the constant of Lemma \ref{thm:asymptbehav}. We fix a further constant  $0<\delta'<\min \{\frac{1}{2},\delta\}$. Then there exists    a complex gauge transformation $g=\exp(\gamma)\in \mathcal G^c$ where $\gamma\in H_{-1+\delta'}^2(r^{-1}\,dr)$    such that the restriction of $g^{\ast}(A,\Phi)$ to $\mathcal C_p(0)$  coincides with $(A_p^{\mod},\Phi_p^{\mod})$  for each $p\in\mathfrak p$.  
\end{lemma}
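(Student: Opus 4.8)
The plan is to produce the desired complex gauge transformation $g$ on each cylinder $\mathcal C_p(0)$ separately and then patch the local gauges together by cutting them off near the thick part of $\Sigma_0$. So fix $p\in\mathfrak p$ and work on $\mathcal C_p(0) = \mathcal C_p^+(0)\sqcup_{\text{node}}\mathcal C_p^-(0)$, using the coordinate $z$ near $z=0$. By Lemma~\ref{thm:asymptbehav} we may assume (after a preliminary unitary gauge transformation, already absorbed into $(A,\Phi)$) that $A = A_p^{\mod}+\alpha$ and $\Phi = \Phi_p^{\mod}+\varphi$ with $(\alpha,\varphi)$ together with its first two rescaled derivatives lying in $C^0_{-1+\delta}$, i.e.\ decaying like $r^{1-\delta}$. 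The first step is to kill the Higgs error $\varphi$: since $\Phi = \varphi(z)\,\tfrac{dz}{z}$ with $\varphi(z)$ diagonalizable and $C^0_{-1+\delta}$-close to $\mathrm{diag}(C_p,-C_p)$ (this uses (A1) and $C_p\neq 0$, so the two eigenvalues stay uniformly separated), there is a unique complex gauge transformation $g_1(z)$, close to the identity, with $g_1^{-1}\varphi(z)g_1 = \mathrm{diag}(C_p,-C_p)$; concretely $g_1$ is the normalized eigenvector matrix of $\varphi(z)$, which depends holomorphically on $z$ because $\varphi$ is holomorphic, and $\log g_1$ inherits the decay and regularity of $\varphi$, hence lies in $H^2_{-1+\delta'}(r^{-1}\,dr)$ for $\delta'<\delta$. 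After applying $g_1$ we have $\Phi = \Phi_p^{\mod}$ exactly on $\mathcal C_p(0)$, while $A = A_p^{\mod}+\alpha'$ for a new error $\alpha'$ with the same decay.

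The second step is to remove the remaining connection error $\alpha'$ while keeping $\Phi_p^{\mod}$ fixed. Here one exploits that $(A,\Phi_p^{\mod})$ still solves Hitchin's equations \eqref{eq:hitequ}; since $\delb_A\Phi_p^{\mod}=0$ and $\Phi_p^{\mod}$ is the fixed diagonal model, the holomorphic-structure equation forces the off-diagonal part of the $(0,1)$-component of $\alpha'$ to vanish, so $\delb_A = \delb_{A_p^{\mod}} + (\text{diagonal }(0,1)\text{-form})$. A diagonal $(0,1)$-form $\beta\,\tfrac{d\bar z}{\bar z}\cdot\mathrm{diag}(1,-1)$ is gauged away by $g_2 = \exp\big(\mathrm{diag}(f,-f)\big)$ with $\delb f$ matching $\beta$; solvability with the required weighted-$H^2$ control is a standard $\delb$-estimate on the cylinder (equivalently a scalar Laplace equation $\Delta f = \text{source}$ with source in the appropriate weighted space, uniquely solvable in $H^2_{-1+\delta'}(r^{-1}\,dr)$ because the indicial roots of the cylindrical Laplacian avoid the weight $-1+\delta'$ — this is where $\delta'<\tfrac12$ enters). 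Applying $g_2$ makes $\delb_A = \delb_{A_p^{\mod}}$; then the first Hitchin equation $F_A^\perp+[\Phi\wedge\Phi^*]=0$, now satisfied by both $A$ and $A_p^{\mod}$ with the \emph{same} $\delb$-operator and the \emph{same} $\Phi$, pins down $A$ uniquely by the usual argument (two unitary connections inducing the same holomorphic structure and solving the same moment-map equation differ by a unitary gauge transformation, which here must be the identity since $A$ and $A_p^{\mod}$ agree asymptotically). Hence $g_2^{\ast}g_1^{\ast}(A,\Phi) = (A_p^{\mod},\Phi_p^{\mod})$ on $\mathcal C_p(0)$.

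The third step is globalization. The composite $h_p := g_1 g_2 \in \mathcal G^c$, defined a priori only on $\mathcal C_p(0)$, is $I + (\text{something in }H^2_{-1+\delta'}(r^{-1}\,dr))$, so it tends to the identity exponentially fast (in the cylindrical variable $\tau=-\log r$) toward the node. Choose a cutoff $\chi_p$ equal to $1$ on the inner half of $\mathcal C_p(0)$ and $0$ outside $\mathcal C_p(0)$, and set $g := \exp\big(\sum_{p}\chi_p\log h_p\big)$; this is globally defined on $\Sigma_0$, equals $h_p$ on the inner part of each $\mathcal C_p(0)$, equals the identity on the thick part, and $\log g\in H^2_{-1+\delta'}(r^{-1}\,dr)$ by construction. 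On the inner cylinders $g^{\ast}(A,\Phi)=(A_p^{\mod},\Phi_p^{\mod})$ exactly as required (note the statement only asserts agreement on $\mathcal C_p(0)$ itself — if one insists on the full $\mathcal C_p(0)$ rather than its inner half, simply take the cutoff region to lie in a slightly larger coordinate patch, which is harmless). The main obstacle is the second step: ensuring the $\delb$- and Laplace-type equations are solvable with gauge transformations in exactly the weighted space $H^2_{-1+\delta'}(r^{-1}\,dr)$ and with norm controlled by the (small) norm of the error — this is a Fredholm/indicial-root computation on the cylinder, and the hypothesis $0<\delta'<\min\{\tfrac12,\delta\}$ is precisely what keeps the weight off the spectrum of the relevant model operators so that a bounded right inverse exists; the smallness of the error from Lemma~\ref{thm:asymptbehav} then guarantees $g$ is genuinely close to the identity and lands in $\mathcal G^c$.
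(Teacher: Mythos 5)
Your overall architecture (first bring $\Phi$ to the diagonal model by an algebraic complex gauge close to the identity, using (A1) to keep the eigenvalues of $\varphi$ separated; then remove the residual, necessarily diagonal, connection error by a diagonal complex gauge obtained from a scalar linear PDE in weighted spaces on the cylinder; finally cut off into the thick part) parallels the paper, whose proof is exactly the combination of Propositions \ref{prop:diaggauge} and \ref{prop:gaugetomodel}. Your step 1 is essentially Proposition \ref{prop:diaggauge}, except that the paper writes the diagonalizing gauge down explicitly; your side remark that $g_1$ ``depends holomorphically on $z$'' is neither needed nor correct, since $\varphi$ is holomorphic only with respect to $\delb_A$, not $\delb$ --- what matters is only the $C^0_{-1+\delta}$ decay of $(\alpha,\varphi)$ and its rescaled derivatives from Lemma \ref{thm:asymptbehav}. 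Your observation that the $(0,1)$-error must be diagonal uses only $\delb_A\Phi_p^{\mod}=0$ and is fine.

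The genuine gap is in step 2. You assert that after the (non-unitary) gauge $g_1$ the pair $(A,\Phi_p^{\mod})$ ``still solves Hitchin's equations'', and you conclude $A=A_p^{\mod}$ because the first equation is ``now satisfied by both $A$ and $A_p^{\mod}$ with the same $\delb$-operator and the same $\Phi$''. That premise is false: only the second equation $\delb_A\Phi=0$ is invariant under complex gauge transformations; the first equation $F_A^\perp+[\Phi\wedge\Phi^*]=0$ involves the hermitian adjoint and is preserved only by unitary gauge transformations (if it were complex-gauge invariant, the entire contraction-mapping scheme of the paper, which searches the complex orbit for a zero of $\mathcal F$, would be vacuous). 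So the moment-map uniqueness argument you invoke has no hypotheses to stand on. The conclusion you want at that point can be rescued much more cheaply: once $\delb_{A_2}=\delb_{A_p^{\mod}}$, and provided the complex gauge action is implemented in the standard unitarity-preserving way (the $(1,0)$-part of an $H$-unitary connection is minus the adjoint of its $(0,1)$-part; for a non-hermitian $g$ this means first absorbing the unitary factor of the polar decomposition), the two connections coincide outright, with no appeal to the first equation. With that repair your $\delb$-equation route is viable, and its weighted solvability (Fourier modes on the cylinder, $\delta'<\frac12$ keeping the weight off the indicial roots, norm of the solution controlled by the data) is the exact analogue of what the paper actually proves for the scalar Poisson equation $\Delta_0u=h_p$ in Proposition \ref{prop:gaugetomodel}; the paper then matches curvatures with the flat model via a hermitian diagonal gauge and uses a radial-gauge normalization plus the decay $\beta_{2,p}\in\mathcal O(r^{\delta})$ to force the remaining error to vanish. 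Finally, your cutoff only achieves the model form on an inner subcylinder; that is acceptable --- the paper does the same and simply renames $\mathcal C_p(0)$ --- but it should be stated as such rather than deferred to ``a slightly larger coordinate patch''.
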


\begin{proof}
The assertion follows by combining the subsequent Propositions \ref{prop:diaggauge} and \ref{prop:gaugetomodel}.
\end{proof}

\begin{proposition}\label{prop:diaggauge}
On $\Sigma_0$ there exists a complex gauge transformation $g=\exp(\gamma)\in \mathcal G^c$ such that  $\gamma,r\nabla\gamma,r^2\nabla^2\gamma\in\mathcal O(r^{\delta})$ 
 with the following  significance.  On a sufficiently small noded subcylinder of    $\mathcal C_p(0)\subset\Sigma_0$, the pair $(A_1,\Phi_1):=g^{\ast}(A,\Phi)$ equals  
\begin{equation*}
\Phi_1=\Phi_p^{\mod}=\begin{pmatrix}C_p&0\\0&-C_p\end{pmatrix}\,\frac{dz}{z}
\end{equation*}
and
\begin{equation}\label{eq:connectionA_1}
A_1=A_p^{\mod}+\begin{pmatrix}\beta_p&0\\
0&-\beta_p\end{pmatrix}\left(\frac{dz}{z}-\frac{d\bar z}{z}\right)
\end{equation}
for some map $\beta_p$ satisfying $\beta_p,r\nabla \beta_p \in\mathcal O(r^{\delta})$.
\end{proposition}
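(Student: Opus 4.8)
The plan is to diagonalize the Higgs field first and then normalize it to $\Phi_p^{\mod}$, using a complex gauge transformation obtained as the exponential of a small off-diagonal (respectively diagonal) perturbation, while controlling the decay of $\gamma$ in the weighted $C^0$-spaces provided by Lemma \ref{thm:asymptbehav}. Write the given solution near $p$ as $(A,\Phi) = (A_p^{\mod}+\alpha,\Phi_p^{\mod}+\varphi)$ with $(\alpha,\varphi),(r\nabla\alpha,r\nabla\varphi),(r^2\nabla^2\alpha,r^2\nabla^2\varphi)\in C^0_{-1+\delta}$ (Lemma \ref{thm:asymptbehav}, after renaming). Since $\Phi(z) = \varphi(z)\,\frac{dz}{z}$ with $\varphi(z)\to\begin{pmatrix}C_p&0\\0&-C_p\end{pmatrix}$ and $C_p\neq 0$ by assumption (A1), the endomorphism $\varphi(z)$ is diagonalizable for $r$ small, with eigenvalues $\pm\lambda_p(z)$ where $\lambda_p(z) = \sqrt{-q(z)/(dz/z)^2}$; by the normal form \eqref{eq:holQD} for $q=\det\Phi$, in fact $\lambda_p(z)\equiv C_p$ is \emph{constant}. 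So there is a gauge transformation $g_1(z) = \Id + \eta(z)$, with $\eta$ off-diagonal, conjugating $\varphi(z)$ to $\operatorname{diag}(C_p,-C_p)$ exactly; solving the linear algebra equation $g_1^{-1}\varphi g_1 = \operatorname{diag}(C_p,-C_p)$ shows $\eta$ is a smooth algebraic function of the off-diagonal entries of $\varphi$ (which lie in $C^0_{-1+\delta}$) and hence $\eta,r\nabla\eta,r^2\nabla^2\eta\in\mathcal O(r^\delta)$; in particular $g_1=\exp(\gamma_1)$ with $\gamma_1$ of the same decay. This arranges $g_1^\ast\Phi = \Phi_p^{\mod}$ and, simultaneously, $g_1^\ast A = A_p^{\mod}+\alpha'$ with $\alpha'$ still of class $\mathcal O(r^\delta)$ (with the matching derivative bounds), but $\alpha'$ need not yet be diagonal.

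The second step is to kill the off-diagonal part of $\alpha'$ while preserving $\Phi_p^{\mod}$. The holomorphicity equation $\delb_{A_1}\Phi_1 = 0$ with $\Phi_1 = \Phi_p^{\mod}$ diagonal forces the off-diagonal part of $A_1^{0,1}$ to commute appropriately with $\Phi_p^{\mod}$; since $C_p\neq 0$, the adjoint action $[\Phi_p^{\mod},\cdot]$ is invertible on off-diagonal endomorphisms, and the equation $\delb_{A_p^{\mod}}(\text{off-diag part}) + [\text{off-diag of }\alpha', \Phi_p^{\mod}] + \dots = 0$, read componentwise on $\C^\ast$, shows the off-diagonal part of $\alpha'$ is itself $\delb$-exact, i.e.\ of the form $\delb u$ for some off-diagonal $u$ with the right decay. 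Applying $g_2 = \exp(\gamma_2)$ with $\gamma_2$ the corresponding (anti-)self-adjoint completion removes this off-diagonal part without disturbing $\Phi_p^{\mod}$ (because $g_2$ commutes with $\Phi_p^{\mod}$ up to the required order — here one uses that conjugating $\Phi_p^{\mod}$ by $\exp(\gamma_2)$ with $\gamma_2$ off-diagonal produces only higher-order terms, which one then absorbs into a further correction, iterating if necessary). The outcome is $A_1 = A_p^{\mod} + \operatorname{diag}(\beta_p,-\beta_p)\left(\frac{dz}{z}-\frac{d\bar z}{\bar z}\right)$ — written in the radial gauge, using $\frac{dz}{z}-\frac{d\bar z}{\bar z} = 2i\,d\theta$ — with $\beta_p,r\nabla\beta_p\in\mathcal O(r^\delta)$; the composite $g=\exp(\gamma)$ with $\gamma=\gamma_1\# \gamma_2$ (Baker–Campbell–Hausdorff) inherits $\gamma,r\nabla\gamma,r^2\nabla^2\gamma\in\mathcal O(r^\delta)$ since the decay rate $\mathcal O(r^\delta)$ is closed under products and the algebra operations involved.

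The main obstacle I expect is \emph{not} the diagonalization of $\Phi$, which is pure linear algebra, but the simultaneous control in the \emph{second} step: one has to remove the off-diagonal part of the connection while guaranteeing that the Higgs field is not moved away from $\Phi_p^{\mod}$, and that the gauge transformation stays in $\mathcal O(r^\delta)$ with two derivatives. The delicate point is that a complex gauge transformation acting on $A$ also acts on $\Phi$ by conjugation, so the two normalizations interfere; the resolution is to exploit that $[\Phi_p^{\mod},\cdot]$ is an isomorphism on off-diagonal blocks (consequence of (A1)), which makes the off-diagonal part of $A$ removable by a gauge transformation that \emph{commutes} with the diagonal $\Phi_p^{\mod}$ to leading order, and to set up a small fixed-point / iteration scheme in the weighted space to handle the lower-order cross-terms. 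A secondary technical point is bookkeeping the derivative bounds $r\nabla\gamma, r^2\nabla^2\gamma\in\mathcal O(r^\delta)$ through the algebraic manipulations — routine given Lemma \ref{thm:asymptbehav} but requiring care with the cylindrical-coordinate commutators $[L,\nabla]$ exactly as in the proof of that lemma. Finally, Proposition \ref{prop:gaugetomodel} will presumably take over from here to upgrade "coincides on a sufficiently small noded subcylinder" to "coincides on all of $\mathcal C_p(0)$" via a further diagonal gauge transformation, so for the present statement it suffices to produce the normal form \eqref{eq:connectionA_1} near the node.
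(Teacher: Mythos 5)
Your first step (diagonalizing the Higgs field exactly to $\Phi_p^{\mod}$ by a gauge transformation close to the identity, using (A1) and the fact that the normal form \eqref{eq:holQD} makes the eigenvalues identically $\pm C_p$, with the decay of $\gamma_1$ inherited from Lemma \ref{thm:asymptbehav}) is essentially the paper's first step; the paper just writes the diagonalizing transformation explicitly as $g_p=(1+d_0)^{-1/2}\begin{pmatrix}1&d_1\\ d_2&1\end{pmatrix}$ with $d_j=-\varphi_j/(2C_p+\varphi_0)$, using the determinant relation $2C_p\varphi_0+\varphi_0^2+\varphi_1\varphi_2=0$.

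The gap is in your second step, which both rests on a false claim and addresses a problem that does not exist. You assert that conjugating $\Phi_p^{\mod}$ by $\exp(\gamma_2)$ with $\gamma_2$ off-diagonal ``produces only higher-order terms''; in fact the first-order term $[\Phi_p^{\mod},\gamma_2]$ is nonzero for every nonzero off-diagonal $\gamma_2$, precisely because $\mathrm{ad}_{\Phi_p^{\mod}}$ is invertible on off-diagonal endomorphisms (the very fact you invoke for the opposite purpose), so the proposed $g_2$ moves the Higgs field away from $\Phi_p^{\mod}$ at leading order and the sketched ``absorb and iterate'' scheme is never set up; likewise the claimed $\bar\partial$-exactness of the off-diagonal part of $\alpha'$ is not established. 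The correct observation, which makes all of this unnecessary, is that once $\Phi_1=\Phi_p^{\mod}$ exactly, complex gauge invariance of the second equation in Eq.\ \eqref{eq:hitequ} gives $0=\bar\partial_{A_1}\Phi_p^{\mod}=[A_1^{0,1}\wedge\Phi_p^{\mod}]$ (the constant diagonal term is annihilated by $\bar\partial$), so $A_1^{0,1}$ commutes with $\Phi_p^{\mod}$ and is therefore diagonal since $C_p\neq -C_p$ by (A1); unitarity then gives $A_1^{1,0}=-(A_1^{0,1})^{\ast}$ diagonal as well, so no second gauge transformation is needed to remove off-diagonal terms — only a further \emph{diagonal} unitary gauge transformation to put $A_1$ into radial gauge, which leaves $\Phi_p^{\mod}$ untouched and yields the form \eqref{eq:connectionA_1}. (A side remark: Proposition \ref{prop:gaugetomodel} is then used to gauge away the remaining diagonal perturbation $\beta_p$, i.e.\ to reach $A_p^{\mod}$ exactly, not to enlarge the subcylinder on which the normal form holds.)
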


\begin{proof}
By Lemma \ref{thm:asymptbehav}, the Higgs field $\Phi$ takes near any $p\in\mathfrak p$ the form
\begin{equation*}
\Phi=\begin{pmatrix}C_p+\varphi_0&\varphi_1\\\varphi_2&-C_p-\varphi_0\end{pmatrix}\,\frac{dz}{z}
\end{equation*}
for smooth functions $\varphi_j$ satisfying $\varphi_j,r\nabla\varphi_j,r^2\nabla^2\varphi_j\in\mathcal O(r^{\delta})$ for $j=0,1,2$. Because   $\det \Phi=-C_p^2\frac{dz^2}{z^2}$ by Eq.\ \eqref{eq:holQD}, there holds the relation
\begin{equation}\label{eq:detrelation}
2C_p\varphi_0+\varphi_0^2+\varphi_1\varphi_2=0.
\end{equation}
We claim the existence of   a complex gauge transformation $g$ which diagonalizes $\Phi$ and has the desired decay. To define it, we set for  $j=0,1,2$
\begin{equation*}
d_j:=- \frac{\varphi_j}{2C_p+\varphi_0} .
\end{equation*}
Then the complex gauge transformation
\begin{equation*}
g_p:=\frac{1}{\sqrt{1+d_0}} \begin{pmatrix} 1&d_1\\
d_2&1\end{pmatrix}
\end{equation*}   
has the  desired properties. Indeed, using Eq.\ \eqref{eq:detrelation}, it is easily seen that $g_p$ has determinant $1$ and satisfies $g_p^{-1}\Phi g_p=\Phi_p^{\mod}$. It remains to check that $g_p $     can be written as $g_p=\exp(\gamma_p)$, where $\gamma_p$ decays at   the   asserted   rate. As for the denominator $2C_p+\varphi_0$ appearing in the definition of $d_j$, it follows that for all $z$ sufficiently close to $0$ its modulus is  uniformly bounded away from $0$ since $\varphi_0\in \mathcal O(r^{\delta})$ and $C_p\neq 0$ by assumption $(A1)$. Hence each of the functions $d_j$,  $j=0,1,2$, satisfies $d_j,r\nabla d_j,r^2\nabla^2 d_j\in\mathcal O(r^{\delta})$.   The same decay properties then hold for  $g_p-\mathbbm{1}$ and thus there exits $\gamma_p$ as required.
\medskip\\
We extend the locally defined gauge transformations $g_p$ to a smooth gauge transformation $g$ on $\Sigma_0$. The gauge-transformed connection $A_1=g^{\ast}A$ is now automatically diagonal near each $p\in\mathfrak p$. Namely, by complex gauge-invariance of the second equation in Eq.\ \eqref{eq:hitequ} and the above    it satisfies    $\bar\partial_{A_1}\Phi_1=\bar\partial_{A_1}\Phi_p^{\mod}=0$. Hence
\begin{equation*}
0=\bar\partial\begin{pmatrix}C_p&0\\0&-C_p\end{pmatrix}\,\frac{dz}{z}+[A_1^{0,1}\wedge \Phi_p^{\mod}]=[A_1^{0,1}\wedge \Phi_p^{\mod}].
\end{equation*}
Thus  $A_1^{0,1}$ commutes with   $\Phi_p^{\mod}$ and therefore is diagonal. The same holds true for $A_1^{1,0}=-(A_1^{0,1})^{\ast}$, hence for $A_1$.  At $r=0$, the difference $B:=r(A_1-A)$ satisfies $B,r\nabla B  \in\mathcal O(r^{\delta})$ as follows from   the transformation formula
\begin{equation*}
A_1^{0,1}=g_p^{-1}A^{0,1}g_p+g_p^{-1}\bar\partial g_p=g_p^{-1}A^{0,1}g_p+\bar\partial\gamma_p
\end{equation*}
and the corresponding decay behavior of $\gamma_p$. If necessary, we apply a further unitary gauge transformation which can be chosen to be diagonal near each puncture $p\in\mathfrak p$, to put the connection $A_1$ into radial gauge. This last step leaves the Higgs field unchanged near  $p$. The asserted decay property of the connection then follows from a similar argument.  
\end{proof}

From now on, we again use the notation $\mathcal C_p(0)$ for  the noded subcylinder  occuring in the statement of the previous proposition. In view of this result, in order to show Lemma \ref{lem:normformlem} it remains to find a further  gauge transformation $g\in\mathcal G^c$ which fixes $\Phi_1$ and transforms the connection $A_1$ such that it coincides with  $A_p^{\mod}$ on each noded  cylinder  $\mathcal C_p(0)$. We look for $g$ in the form $g=\exp(\gamma)$ for some hermitian section $\gamma$ and first solve the easier problem of transforming via $g^{-1}$ the flat connection $A_p^{\mod}$ into a connection $A_2$ whose curvature coincides with that of $A_1$ on  $\mathcal C_p(0)$. We address  this problem  in Proposition  \ref{prop:gaugetomodel} below and assume for the moment the existence of such a section $\gamma$. That is, $\gamma^{\ast}=\gamma$ and $g=\exp(\gamma)$ satisfies 
\begin{equation*}
g^{-1}\Phi_1 g=\Phi_1,\qquad F_{(g^{-1})^{\ast}A_p^{\mod}}=F_{A_1}\qquad\textrm{locally on each}\;\mathcal C_p(0),\quad p\in\mathfrak p.
\end{equation*}
Put  $A_2:=g^{\ast}A_1$, which we in addition may assume to be in radial gauge, after modifying it,  if necessary, over each $\mathcal C_p(0)$ by a further diagonal and unitary gauge transformation of the same decay. Then writing
\begin{equation*}
A_2=\begin{pmatrix}\alpha_p+\beta_{2,p}&0\\0&-\alpha_p-\beta_{2,p}\end{pmatrix} \,d\theta\qquad\textrm{locally on each}\;\mathcal C_p(0),\quad p\in\mathfrak p,
\end{equation*}
for some function $\beta_{2,p}\in\mathcal O(r^{\delta})$ it follows that
\begin{equation*}
F_{A_2}=\begin{pmatrix}\partial_r\beta_{2,p}&0\\0&-\partial_r\beta_{2,p}\end{pmatrix}  \,dr\wedge d\theta=F_{A_p^{\mod}}=0.
\end{equation*}
Hence  $\beta_{2,p}\equiv0$ and $A_2=A_p^{\mod}$ locally on $\mathcal C_p(0)$, as desired.
\medskip\\
It remains to show the existence of the hermitian section $\gamma$ used in the above construction. It was shown in \cite[Proposition 4.4]{msww14} that this section $\gamma$ is determined as  solution  to the Poisson equation  
\begin{equation}\label{eq:Poisson}
\Delta_{A_p^{\mod}}\gamma=i\ast F_{A_1}^{\perp}
\end{equation}
for the connection Laplacian $\Delta_{A_p^{\mod}} \colon \Omega^0(i\mathfrak{su}(E))\to \Omega^0(i\mathfrak{su}(E))$, which we need to solve locally on each noded cylinder $\mathcal C_p(0)$. The Laplacian and Hodge-$\ast$ operator appearing here are taken with respect to the metric $g=\frac{\left|dz\right|^2}{\left|z\right|^2}$ on $\mathcal C_p(0)$ for reasons that will become clear later. We are    interested in finding a solution of Eq.\ \eqref{eq:Poisson} of the form
\begin{equation*}\label{eq:deffctu}
\gamma=\begin{pmatrix}u&0\\0&-u\end{pmatrix}
\end{equation*}
for some real-valued function $u$. With $\ast\, dr\wedge d\theta=r$ it follows from Eq.\ \eqref{eq:connectionA_1} that
\begin{equation*}
i\ast F_{A_1}^{\perp}=\begin{pmatrix}-2r\partial_r\beta_p&0\\0&2r\partial_r\beta_p\end{pmatrix}=:\begin{pmatrix}h_p&0\\0&-h_p\end{pmatrix}.
\end{equation*}
Therefore   Eq.\ \eqref{eq:Poisson} reduces to the  equation
\begin{equation}\label{eq:scalPoisson}
\Delta_0u=h_p
\end{equation}
for the scalar Laplacian $\Delta_0=-(r\partial_r)^2-\partial_{\theta}^2$. To obtain regularity estimates for solutions to  Eq.\ \eqref{eq:scalPoisson} it is convenient to   introduce the Hilbert spaces  
\begin{equation*}
L_{-1+\delta}^2(r^{-1}dr)=\left\{u\in L^2(\D)\mid  r^{-\delta}u\in L^2(r^{-1}dr)  \right\}
\end{equation*}
and 
\begin{multline}\label{eq:defspaceHk}
H_{-1+\delta}^k(r^{-1}dr) =\\
\left\{u\in L^2(\D)\mid (r\partial_r)^j\partial_{\theta}^{\ell} u\in L_{-1+\delta}^2(r^{-1}dr), 0\leq j+\ell\leq k\right\}.
\end{multline}
We note that $r\partial_r\beta_p\in\mathcal O(r^{\delta})$ as shown in Proposition \ref{prop:diaggauge}, and therefore  the right-hand side   of Eq.\ \eqref{eq:scalPoisson} satisfies $h_p\in\mathcal O(r^{\delta})$. From this it is straightforward to check that  $h_p\in L_{-1+\delta'}^2(r^{-1}dr)$ for any $\delta'<\delta$.

\begin{proposition}\label{prop:gaugetomodel}
Let $\delta>0$ be as before and fix a further constant $0<\delta'<\min \{\frac{1}{2},\delta\}$. Then the Poisson equation  $\Delta_0u=h$  on the punctured disk $\D^{\times} =\{z\in\C\mid 0<\left|z\right|<1\}$ admits a solution $u\in H_{-1+\delta'}^2(r^{-1}dr)$, which satisfies
\begin{equation*}
\|u\|_{H_{-1+\delta'}^2(r^{-1}dr)}\leq C \|h\|_{L_{-1+\delta}^2(r^{-1}dr)}
\end{equation*}
for some constant $C=C(\delta,\delta')$ which does not depend on $h\in L_{-1+\delta}^2(r^{-1}dr)$.
\end{proposition}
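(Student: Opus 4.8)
The plan is to solve the Poisson equation $\Delta_0 u = h$ on $\D^\times$ by separation of variables, decomposing $h$ into its Fourier modes in the angular variable $\theta$ and solving the resulting ordinary differential equations in the radial variable. Writing $h(r,\theta) = \sum_{n\in\Z} h_n(r) e^{in\theta}$ and similarly $u(r,\theta) = \sum_{n\in\Z} u_n(r) e^{in\theta}$, the equation $\Delta_0 u = -(r\partial_r)^2 u - \partial_\theta^2 u = h$ becomes the decoupled system $-(r\partial_r)^2 u_n + n^2 u_n = h_n$ for each $n$. It is convenient to pass to the logarithmic coordinate $t = -\log r \in (0,\infty)$, so that $r\partial_r = -\partial_t$ and the weighted measure $r^{-1}\,dr$ becomes (up to sign) $dt$; the weight $r^{-\delta}$ becomes $e^{\delta t}$. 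In this coordinate the equation reads $-u_n'' + n^2 u_n = h_n$ on $(0,\infty)$, a constant-coefficient ODE which we solve with the appropriate Green's function.

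First I would treat the modes $n \neq 0$. Here the operator $-\partial_t^2 + n^2$ is invertible on $L^2(dt)$, and more importantly on the exponentially weighted space $e^{-\delta t}L^2(dt)$ for any $|\delta| < |n|$; since we only need $\delta' < \tfrac12 < 1 \leq |n|$, the Green's function $G_n(t,s) = \tfrac{1}{2|n|} e^{-|n|\,|t-s|}$ gives a bounded solution operator, and one checks directly that convolution against $G_n$ maps $e^{\delta t} L^2$ to itself with operator norm bounded by $C/(n^2 - \delta^2) \le C/(1-\delta'^2)$, uniformly in $n$ — in fact decaying like $n^{-2}$, which also controls the two $t$-derivatives (equivalently the $(r\partial_r)^j \partial_\theta^\ell$ derivatives with $j+\ell \le 2$) after multiplying by powers of $n$. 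Summing over $n\neq 0$ in $\ell^2$ and using Parseval recovers the $H^2_{-1+\delta'}(r^{-1}\,dr)$ bound on the non-constant part of $u$ in terms of the $L^2_{-1+\delta}(r^{-1}\,dr)$ norm of $h$.

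The case $n = 0$ is the delicate one and I expect it to be the main obstacle. Here the equation is simply $-u_0'' = h_0$ on $(0,\infty)$, and the operator $-\partial_t^2$ is \emph{not} invertible on $L^2(dt)$ (constants and the linear function $t$ are in the kernel/cokernel in the relevant sense), which is precisely the analytic reflection of the "small eigenvalue" / neck-stretching difficulty flagged in the introduction. The point is that we are allowed to lose a little weight: we solve in $H^2_{-1+\delta'}$ with $\delta' < \delta$ strictly. Explicitly, one sets $u_0(t) = \int_0^\infty K(t,s) h_0(s)\,ds$ with a Green's kernel $K$ chosen so that $u_0$ decays like $e^{-\delta' t}$ rather than merely being bounded; a natural choice is $K(t,s) = \min(t,s)$ adjusted by a decaying correction, or more cleanly $u_0(t) = \int_t^\infty (s-t) h_0(s)\,ds$ plus a homogeneous solution tuned to kill the linear growth — one verifies using $h_0 \in e^{\delta t}L^2$ and Cauchy–Schwarz that this integral converges, satisfies $-u_0'' = h_0$, and obeys $\|e^{\delta' t} u_0\|_{L^2} + \|e^{\delta' t} u_0'\|_{L^2} + \|e^{\delta' t}u_0''\|_{L^2} \le C(\delta,\delta')\|e^{\delta t}h_0\|_{L^2}$, the constant blowing up as $\delta' \to \delta$ but finite for $\delta' < \delta$. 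The constraint $\delta' < \tfrac12$ enters to keep $u_0 \in L^2(\D)$ (unweighted), i.e.\ to ensure the solution genuinely lies in the disk's $L^2$ space and not just in a weighted space; one records this compatibility explicitly. Finally I would assemble the full solution $u = \sum_n u_n e^{in\theta}$, note that the $n=0$ and $n\neq 0$ estimates combine to give the claimed inequality with $C = C(\delta,\delta')$ independent of $h$, and observe that elliptic regularity (or simply the explicit formulas) upgrades the weak solution to the stated $H^2$ regularity.
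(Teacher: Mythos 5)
Your approach is essentially the paper's: Fourier decomposition in $\theta$, explicit integral kernels mode by mode (after the substitution $t=-\log r$, your half-line Green's function $\tfrac{1}{2|n|}e^{-|n|\,|t-s|}$ and your ``clean'' zero-mode formula $\int_t^\infty (s-t)h_0(s)\,ds$ are exactly the kernels $K_j$ and the explicit $u_0$ of the paper's proof), with uniform-in-$n$ weighted bounds via Young/Schur-type estimates and the weight drop $\delta'<\delta$ absorbing the $n=0$ mode. The only discrepancies are cosmetic and harmless: the paper runs Schur's test in the $r$-variable rather than on the cylinder, your $n=0$ formula already decays so no homogeneous correction is needed, and the condition $\delta'<\tfrac12$ is not actually what keeps $u_0\in L^2(\D)$ (any positive weight does); it is simply part of the hypothesis carried along from Lemma \ref{lem:normformlem}.
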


\begin{proof}
Fourier decomposition $u=\sum_{j\in\Z}u_je^{ij\theta}$ and $h=\sum_{j\in\Z}h_je^{ij\theta}$ reduces Eq.\ \eqref{eq:scalPoisson} to the system of ordinary differential equations
\begin{equation}\label{eq:Fourierode}
\left(-(r\partial_r)^2+j^2\right)u_j=h_j\qquad (j\in\Z)
\end{equation}
which we analyze in each Fourier mode   separately. For $j=0$ it has the solution
\begin{equation*}
u_0(r)=-\log r\int_0^rh_0(s)s^{-1}\,ds+\int_0^rh_0(s)s^{-1}\log s\,ds.
\end{equation*}
We estimate the two terms on the right-hand side separately. By the Cauchy-Schwarz inequality it follows that
\begin{eqnarray*}
\lefteqn{\int_0^1\left|\log r\int_0^rh_0(s)\,\frac{ds}{s} \right|^2r^{-2\delta'}\,\frac{dr}{r}}\\
&\leq&\int_0^1(\log r)^2\left(\int_0^r|h_0(s)|^2s^{-2\delta}\,\frac{ds}{s}\right) \left(\int_0^rs^{2\delta}\,\frac{ds}{s}\right)r^{-2\delta'}\,\frac{dr}{r}\\
&\leq&\int_0^1(\log r)^2 \left(\int_0^rs^{2\delta}\,\frac{ds}{s}\right)r^{-2\delta'}\,\frac{dr}{r} \cdot \int_0^1|h_0(s)|^2s^{-2\delta}\,\frac{ds}{s}\\
&=&\frac{1}{2\delta}\int_0^1(\log r)^2  r^{2(\delta-\delta')}\,\frac{dr}{r}\cdot \|h_0\|_{L_{-1+\delta}^2(r^{-1}dr)}^2\\
&\leq&C(\delta,\delta')\|h_0\|_{L_{-1+\delta}^2(r^{-1}dr)}^2.
\end{eqnarray*}
Similarly, using again the Cauchy-Schwarz inequality and for any constant $0<\delta''<\delta-\delta'$  the elementary inequality
\begin{equation*}
0\geq s^{\delta''}\log s\geq -\frac{1}{\delta''e}=: -C(\delta'')\qquad(0<s\leq1)
\end{equation*}
we obtain that 
\begin{eqnarray*}
\lefteqn{\int_0^1\left| \int_0^rh_0(s)\log s\,\frac{ds}{s} \right|^2r^{-2\delta'}\,\frac{dr}{r}}\\
&\leq&\int_0^1\left(\int_0^r\left|h_0(s)\right|^2s^{-2\delta}\,\frac{ds}{s}\right)\left(\int_0^r(\log s)^2s^{2\delta}\,\frac{ds}{s}\right)r^{-2\delta'}\,\frac{dr}{r}\\
&\leq&\|h_0\|_{L_{-1+\delta}^2(r^{-1}dr)}^2\cdot\int_0^1\left(\int_0^r(\log s)^2s^{2\delta}\,\frac{ds}{s}\right)r^{-2\delta'}\,\frac{dr}{r}\\
&\leq&\|h_0\|_{L_{-1+\delta}^2(r^{-1}dr)}^2\cdot C(\delta'')^2\int_0^1\left(\int_0^rs^{2(\delta-\delta'')}\,\frac{ds}{s}\right)r^{-2\delta'}\,\frac{dr}{r}\\
&\leq&\|h_0\|_{L_{-1+\delta}^2(r^{-1}dr)}^2\cdot C(\delta,\delta'')\int_0^1r^{2(\delta-\delta'-\delta'')}\,\frac{dr}{r}\\
&\leq& C(\delta,\delta',\delta'')\|h_0\|_{L_{-1+\delta}^2(r^{-1}dr)}^2.
\end{eqnarray*}
For $j\geq1$ a solution of Eq.\ \eqref{eq:Fourierode} is given by
\begin{eqnarray}\label{eq:integop}
\nonumber u_j(r)&=&\frac{r^{-j}}{2j}\int_0^rh_j(s)s^j\,\frac{ds}{s}-\frac{r^j}{2j}\int_1^rh_j(s)s^{-j}\,\frac{ds}{s}\\
&=&\frac{r^{-j}}{2j}\int_0^rh_j(s)s^js^{2\delta}\,\frac{ds}{s^{1+2\delta}}+\frac{r^j}{2j}\int_r^1h_j(s)s^{-j}s^{2\delta}\,\frac{ds}{s^{1+2\delta}}.
\end{eqnarray}
The integral kernel of the map $K_j\colon h_j\mapsto u_j=K_jh_j\colon L_{-1+\delta}^2(r^{-1}\,dr)\to L_{-1+\delta'}^2(r^{-1}\,dr)$ therefore is 
\begin{equation*}
K_j(r,s)=\begin{cases} \frac{1}{2j}r^{-j} s^{j+2\delta},&\textrm{if}\;0\leq s\leq r,\\
\frac{1}{2j}r^j s^{-j+2\delta},&\textrm{if}\;r\leq s\leq 1.\end{cases} 
\end{equation*}
In the case $j\leq-1$ a solution of Eq.\ \eqref{eq:Fourierode} is given by $u_j=K_{-j}h_j$. Applying Schur's test (cf.~\cite{hasu78}), we obtain for the operator  norm of  $K_j$, $j\geq1$, the bound
\begin{eqnarray*}
\lefteqn{\|K_j\|_{\mathcal L(L_{-1+\delta}^2(r^{-1}\,dr),L_{-1+\delta'}^2(r^{-1}\,dr))}\leq}\\
&&\sup_{0\leq r\leq 1}\int_0^1\left|K_j(r,s)\right|s^{-1-2\delta}\,ds+\sup_{0\leq s\leq 1}\int_0^1\left|K_j(r,s)\right|r^{-1-2\delta'}\,dr\\
&=&\sup_{0\leq r\leq 1}\left(\frac{1}{2j}\int_0^r\left|r^{-j} s^{j} s^{2\delta }\right|s^{-1-2\delta}\,ds +\frac{1}{2j}\int_r^1\left|r^j s^{-j}s^{2\delta }\right|s^{-1-2\delta}\,ds\right)\\
&&+\sup_{0\leq s\leq 1}\left( \frac{1}{2j}\int_s^1\left|r^{-j} s^{j} s^{2\delta}\right|r^{-1-2\delta'}\,dr +\frac{1}{2j}\int_0^s\left|r^j s^{-j}s^{2\delta}\right|r^{-1-2\delta'}\,dr  \right)\\
&=&\sup_{0\leq r\leq 1}\frac{1}{2j^2}(1+1-r^j)\\
&&+\sup_{0\leq s\leq 1}\left(\frac{1}{2j(j+2\delta')}(s^{2(\delta-\delta')}-s^{j+2\delta})  +\frac{1}{2j(j-2\delta')}s^{2(\delta-\delta')}\right)\leq \frac{4}{j^2}.
\end{eqnarray*}
The same bound  holds for the operator norm of $K_j$ in the case  $j\leq-1$.  Summing these estimates over $j\in\Z$ implies a bound for $u$ in $L_{-1+\delta'}^2(r^{-1}\,dr)$. The asserted $H_{-1+\delta'}^2(r^{-1}\,dr)$ estimate follows in a similar way by Fourier decomposition of the maps $r\partial_ru=\sum_{j\in\Z}r\partial_ru_j$ and  $(r\partial_r)^2u=\sum_{j\in\Z}(r\partial_r)^2u_j$. In the case $j\neq0$, the resulting terms are up to multiples of $j$, respectively of $j^2$, similar to those in Eq.\ \eqref{eq:integop}. Using again Schur's test it follows that the norms of the corresponding integral kernels are bounded by $\frac{C}{j}$, respectively by $C$, for some constant $C$ which does not depend on $j$, and thus can be summed over. In the case $j=0$, it remains to check the asserted estimate for $r\partial_r u_0$ and $(r\partial_r)^2u_0$. Using Eq.\ \eqref{eq:Fourierode}, the latter equals $-h_0$ and thus is bounded by $C\|h_0\|_{L_{-1+\delta}^2(r^{-1}dr)}$. As for
\begin{equation*}
r\partial_r u_0=-\int_0^rh_0(s)\frac{ds}{s},
\end{equation*}
we use the Cauchy-Schwarz inequality to estimate
\begin{eqnarray*}
\lefteqn{\int_0^1\left| \int_0^rh_0(s) \,\frac{ds}{s} \right|^2r^{-2\delta'}\,\frac{dr}{r}}\\
&\leq&\int_0^1\left(\int_0^r\left|h_0(s)\right|^2s^{-2\delta}\,\frac{ds}{s}\right)\left(\int_0^r s^{2\delta}\,\frac{ds}{s}\right)r^{-2\delta'}\,\frac{dr}{r}\\
&\leq&\|h_0\|_{L_{-1+\delta}^2(r^{-1}dr)}^2\cdot\int_0^1\left(\int_0^r s^{2\delta}\,\frac{ds}{s}\right)r^{-2\delta'}\,\frac{dr}{r}\\
&\leq& C(\delta,\delta')\|h_0\|_{L_{-1+\delta}^2(r^{-1}dr)}^2.
\end{eqnarray*}
This completes the proof.
\end{proof}

After these preparations we can now define  suitable  approximate   solutions to   Eq.\ \eqref{eq:hitequ}. Let  $(A,\Phi)$ be the   exact solution on the noded surface $\Sigma_0$  as  in the beginning of \textsection \ref{subsect:approxsolutions},  and let $\delta'>0$ be the constant of Lemma \ref{lem:normformlem}.   This lemma shows the existence of some $\gamma\in H_{-1+\delta'}^2(r^{-1}dr)$   such that $\exp(\gamma)^{\ast}(A,\Phi)$ coincides with $(A_p^{\mod},\Phi_p^{\mod})$ on each punctured cylinder $\mathcal C_p(0)$.  Let $r>0$ be a smooth function on $\Sigma_0$ which coincides with $\left|z\right|$, respectively $\left|w\right|$ near each puncture.  Fix a constant $0<R<1$ and a smooth cutoff function $\chi_R\colon[0,\infty)\to[0,1]$ with support in $[0,R]$ and such that $\chi_R(r)=1$ if $r\leq\frac{3R}{4}$. We impose the further requirement that
\begin{equation}\label{eq:assumptcutoff}
\left|r\partial_r\chi_R\right|+ \left|(r\partial_r)^2\chi_R\right|\leq C
\end{equation}
for some constant $C$ which does not depend on $R$. Then the map $p\mapsto \chi_R(r(p))\colon \Sigma_0\to\R$ gives rise to a smooth cutoff function on $\Sigma_0$ which by a slight abuse of notation we again denote $\chi_R$. Then we define  
\begin{equation}\label{eq:apprsol}
(A_R^{\app},\Phi_R^{\app}):=\exp(\chi_R\gamma)^{\ast}(A,\Phi).
\end{equation}
Note that by choice of the section $\gamma$, the pair $(A_R^{\app},\Phi_R^{\app})$ is an exact solution of   Eq.\ \eqref{eq:hitequ}   in the region
\begin{equation*}
\Sigma_0\setminus\bigcup_{p\in\mathfrak p}\Big\{z\in\mathcal C_p(0)\mid \frac{3R}{4}\leq\left|z\right|\leq R\Big\}.
\end{equation*}
It equals to $(A_p^{\mod},\Phi_p^{\mod})$ on the subset of points in $\mathcal C_p(0)$ which satisfy $0<\left|z\right|\leq \frac{3R}{4}$.
Set $\rho=\frac{R}{2}$ and let $t\in\C$ with $\left|t\right|=\rho^2$. Then let $\Sigma_{t}$ be the smooth Riemann surface obtained from $\Sigma_0$ by identifying  $zw=t$ at each node $p\in\mathfrak p$, cf.~\textsection\ref{subsect:degeneratingfamilies}. By  construction of the model solutions $(A_p^{\mod},\Phi_p^{\mod})$ it follows that   the restriction of  $(A_R^{\app},\Phi_R^{\app})$ to the subdomain
\begin{equation*}
\Sigma_0\setminus \bigcup_{p\in\mathfrak p}\Big\{z\in\mathcal C_p(0)\mid  \left|z\right|\leq \frac{R}{2}\Big\}
\end{equation*}
extends smoothly over the cut-locus $\left|z\right|= \left|w\right|= \frac{R}{2}=\rho$ and hence defines a   smooth pair  $(A_R^{\app},\Phi_R^{\app})$   on  $\Sigma_{t}$. We call $(A_R^{\app},\Phi_R^{\app})$ the {\bf{approximate solution to the parameter $R$}}. By definition, it satisfies the second equation in Eq.\ \eqref{eq:hitequ} exactly, and the first equation up to some error which we have good control on.

\begin{figure}   
\label{bild2}                                                   
\includegraphics[width=1.1\textwidth]{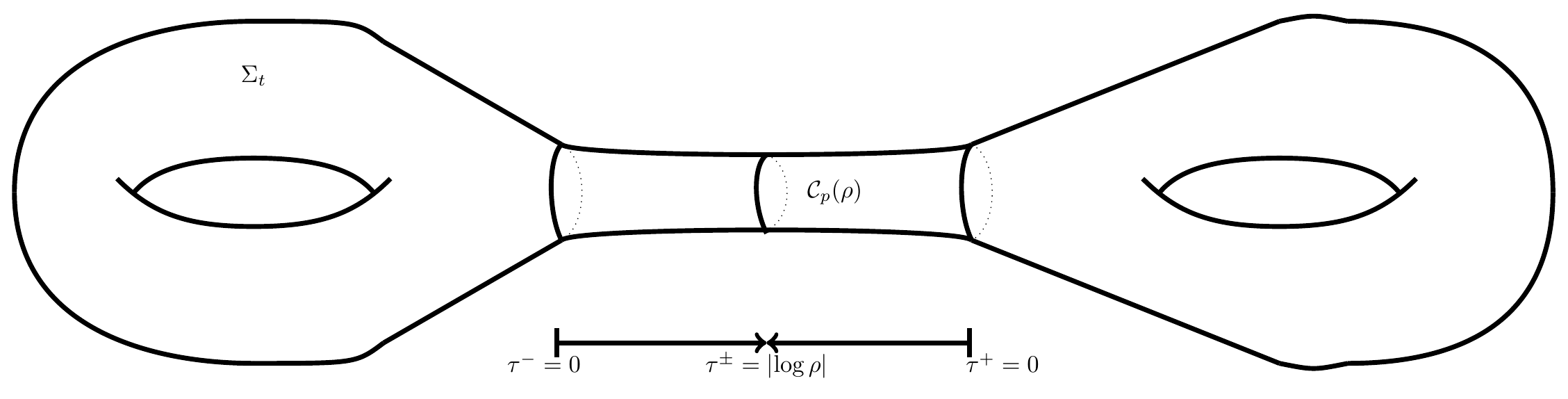}                        \caption{Setup for the definition of the approximate solution $(A_R^{\app},\Phi_R^{\app})$ on a  smooth Riemann surface $\Sigma_t$ with a long cylindrical ``neck". It interpolates between the given solution $(A_0,\Phi_0)$ on the ``thick" part of $\Sigma_t$ and the  model solution $(A_p^{\mod},\Phi_p^{\mod})$ on the  ``neck".}
\end{figure}

\begin{lemma}\label{lem:errorappr}
Let $\delta'>0$ be as above, and fix some further constant  $0<\delta''<\delta'$.  The approximate solution $(A_R^{\app},\Phi_R^{\app})$ to the parameter $0<R<1$ satisfies
\begin{equation*}
\|\ast F_{A_R^{\app}}^{\perp}+\ast [\Phi_R^{\app}\wedge (\Phi_R^{\app})^{\ast}]\|_{C^0(\Sigma_t)}\leq CR^{\delta''}
\end{equation*}
for some constant $C=C(\delta',\delta'')$ which does not depend on  $R$.  
\end{lemma}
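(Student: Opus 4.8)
The plan is to estimate the self-duality error of $(A_R^{\app},\Phi_R^{\app})$ region by region. First I would observe that, by construction in Eq.\ \eqref{eq:apprsol} together with Lemma \ref{lem:normformlem}, the pair $(A_R^{\app},\Phi_R^{\app})=\exp(\chi_R\gamma)^{\ast}(A,\Phi)$ is an \emph{exact} solution of Eq.\ \eqref{eq:hitequ} on the complement of the transition annuli $T_{p,R}:=\{z\in\mathcal C_p(0)\mid \tfrac{3R}{4}\le|z|\le R\}$, $p\in\mathfrak p$; indeed, where $\chi_R\equiv 1$ one gets $(A_p^{\mod},\Phi_p^{\mod})$, and where $\chi_R\equiv 0$ one gets the untouched solution $(A,\Phi)$. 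So the $C^0$ norm of the error $\ast F_{A_R^{\app}}^{\perp}+\ast[\Phi_R^{\app}\wedge(\Phi_R^{\app})^{\ast}]$ is supported in $\bigcup_p T_{p,R}$, and it suffices to bound it there.

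\textbf{Main estimate on the transition annuli.} On $T_{p,R}$ I would write $g_s:=\exp(\chi_R(r)\gamma)$ and expand the self-duality operator $\mathcal F(A,\Phi):=F_A^{\perp}+[\Phi\wedge\Phi^*]$ at $g_s^{\ast}(A,\Phi)$. Since $(A,\Phi)$ is an exact solution, the error is entirely produced by the $r$-dependence of the cutoff: schematically it is a sum of terms, each of which is a smooth (real-analytic) function of $\chi_R\gamma$, $r\partial_r(\chi_R\gamma)=(r\partial_r\chi_R)\gamma+\chi_R\,r\partial_r\gamma$ and $\partial_\theta(\chi_R\gamma)=\chi_R\,\partial_\theta\gamma$, together with the coefficients of $(A,\Phi)$, all computed in the flat cylindrical metric $g=|dz|^2/|z|^2$ (this is why the Laplacian in \eqref{eq:Poisson} was taken with respect to that metric). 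Every such term vanishes to first order in $(\chi_R\gamma, r\partial_r(\chi_R\gamma), \partial_\theta(\chi_R\gamma))$ — i.e.\ it is at least linear in these quantities — because at $\gamma=0$ the error is $\mathcal F(A,\Phi)=0$. Hence, using $|r\partial_r\chi_R|+|(r\partial_r)^2\chi_R|\le C$ from \eqref{eq:assumptcutoff}, the pointwise error on $T_{p,R}$ is controlled by
\[
C\Big(|\gamma|+|r\partial_r\gamma|+|\partial_\theta\gamma|+|(r\partial_r)^2\gamma|+\cdots\Big)\quad\text{on }T_{p,R},
\]
i.e.\ by the pointwise $H^2$-type cylindrical derivatives of $\gamma$. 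Now I invoke the decay of $\gamma$. Lemma \ref{lem:normformlem} gives $\gamma\in H_{-1+\delta'}^2(r^{-1}\,dr)$; by the weighted Sobolev embedding $H_{-1+\delta'}^2(r^{-1}dr)\hookrightarrow C^0_{-1+\delta''}$ for $0<\delta''<\delta'$ (the analogue, in these spaces, of \cite[Lemma 3.1]{bibo04}, valid once $\delta''<\delta'<\tfrac12$, cf.\ Proposition \ref{prop:gaugetomodel}), one gets $|\gamma|+|r\partial_r\gamma|+|\partial_\theta\gamma|+|(r\partial_r)^2\gamma|\le C\,r^{\delta''}$ on the cylinder. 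On $T_{p,R}$ we have $r\le R$, so the error is bounded by $CR^{\delta''}$ there; since the error vanishes elsewhere, this is the asserted global $C^0(\Sigma_t)$ bound.

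\textbf{Where the care is needed.} The routine part is the algebraic expansion of $\mathcal F$ under the complex gauge action; the only genuine points to watch are (i) that the cutoff is inserted \emph{inside} the exponential, so one must differentiate $\exp(\chi_R\gamma)$ and keep track of the commutators $[\gamma, r\partial_r\gamma]$ etc., but these are all still $O(r^{2\delta''})=O(r^{\delta''})$ by the same embedding, and (ii) that on $T_{p,R}$ there is no contribution from the ``old'' model part of $(A,\Phi)$ other than through $\gamma$, because on $\mathcal C_p(0)$ the solution $(A,\Phi)$ has already been normalized in Proposition \ref{prop:diaggauge} so that its non-model part decays like $r^{\delta}\le r^{\delta''}$ as well — this keeps all coefficients appearing in the expansion bounded and contributes only lower-order terms. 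The main (minor) obstacle is thus bookkeeping: making sure that when $\chi_R$ is differentiated one uses \eqref{eq:assumptcutoff} rather than naively differentiating an $R$-dependent object, and that the weighted embedding constant is $R$-independent — both of which hold by construction. Assembling these, the bound $\|\ast F_{A_R^{\app}}^{\perp}+\ast[\Phi_R^{\app}\wedge(\Phi_R^{\app})^{\ast}]\|_{C^0(\Sigma_t)}\le CR^{\delta''}$ follows.
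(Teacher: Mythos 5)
Your overall strategy coincides with the paper's: localize the error to the transition annuli $\{\tfrac{3R}{4}\le |z|\le R\}$, note that $(A_R^{\app},\Phi_R^{\app})=\exp(\chi_R\gamma)^{\ast}(A,\Phi)$ is exact elsewhere, and convert decay of $\gamma$ into the bound $CR^{\delta''}$. However, there is a genuine gap at your key analytic step. You bound the pointwise error by $|\gamma|+|r\partial_r\gamma|+|\partial_\theta\gamma|+|(r\partial_r)^2\gamma|$ and then claim all of these are $\le Cr^{\delta''}$ by the weighted embedding $H^2_{-1+\delta'}(r^{-1}dr)\hookrightarrow C^0_{-1+\delta''}$. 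That embedding (two $L^2$-derivatives in two dimensions) controls only $\gamma$ itself pointwise; it gives no $C^0$ control of first derivatives and certainly none of second derivatives, which for an $H^2$ section are merely $L^2$. Since the curvature of $\exp(\chi_R\gamma)^{\ast}A$ genuinely involves two derivatives of $\chi_R\gamma$, your estimate does not close as stated, and Lemma \ref{lem:normformlem} alone does not supply the missing pointwise bound on $(r\partial_r)^2\gamma$ (even elliptic theory for $\Delta_0 u=h$ with $h=\mathcal O(r^{\delta})$ in $C^0$ yields only $C^{1,\alpha}$/$W^{2,p}$ control, not $C^0$ bounds on all second derivatives).

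The paper's proof is organized precisely to avoid needing generic second derivatives of $\gamma$: with $g=\exp(\chi_R\gamma)$ it uses the identity $\ast F^{\perp}_{A_R^{\app}}=g^{-1}(\ast F_A^{\perp}+i\Delta_A(\chi_R\gamma))g$, expands
$\Delta_A(\chi_R\gamma)=\chi_R\Delta_A\gamma+2r\partial_r\chi_R\cdot r\partial_r\gamma+(r\partial_r)^2\chi_R\cdot\gamma$,
and then replaces $\Delta_A\gamma$ — the only second-order combination that appears — by the right-hand side of the Poisson equation \eqref{eq:Poisson}, i.e.\ $i\ast F^{\perp}_{A_1}$, which is $\mathcal O(r^{\delta})$ pointwise by Proposition \ref{prop:diaggauge}. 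The remaining terms involve only $\gamma$ and $r\partial_r\gamma$ multiplied by the bounded cutoff derivatives from \eqref{eq:assumptcutoff}, and the Higgs-field term needs only $\|g-\operatorname{id}\|_{C^0}\le CR^{\delta''}$, obtained from the embedding chain $H^2_{-1+\delta'}\hookrightarrow H^2_{-1+\delta''}\hookrightarrow C^0$ on the annulus. To repair your argument you must either exploit this structure (the fact that $\gamma$ was \emph{constructed} as a solution of \eqref{eq:Poisson}, so $\Delta_A\gamma$ has known pointwise decay) or separately establish pointwise decay of the full second derivatives of $\gamma$, which would require additional (e.g.\ Schauder-type) estimates not contained in Lemma \ref{lem:normformlem} or Proposition \ref{prop:gaugetomodel} as stated.
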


\begin{proof}
It suffices to estimate the error in the union over $p\in\mathfrak p$ of the regions  $S_p:= \{z\in\mathcal C_p(0)\mid \frac{3R}{4}\leq\left|z\right|\leq R\}$ since $(A_R^{\app},\Phi_R^{\app})$ is an exact solution on its complement. For each $p\in\mathfrak p$ we obtain, using the triangle inequality,
\begin{multline*}
\|\ast F_{A_R^{\app}}^{\perp}+\ast [\Phi_R^{\app}\wedge (\Phi_R^{\app})^{\ast}]\|_{C^0(S_p)}\\
\leq \|\ast (F_{A_R^{\app}}^{\perp}- F_A^{\perp})\|_{C^0(S_p)}+ \|\ast ( [\Phi_R^{\app}\wedge (\Phi_R^{\app})^{\ast}]- [\Phi\wedge\Phi^{\ast}])\|_{C^0(S_p)}
\end{multline*}
since by assumption $(A,\Phi)$ is a solution of Eq.\ \eqref{eq:hitequ} on $\Sigma_0$. With $g=\exp(\chi_R\gamma)$ as in Eq.\ \eqref{eq:apprsol} it follows that
\begin{equation*}
\ast F_{A_R^{\app}}^{\perp}=\ast F_{g^{\ast}A}^{\perp}=g^{-1}(\ast F_A^{\perp}+i\Delta_A(\chi_R\gamma) )g,
\end{equation*}
and hence
\begin{multline*}
\|\ast F_{A_R^{\app}}^{\perp}-\ast F_A^{\perp}\|_{C^0(S_p)}\leq \|\ast g^{-1} F_A^{\perp}g-\ast F_A^{\perp}\|_{C^0(S_p)}+\|g^{-1}\Delta_A(\chi_R\gamma) g\|_{C^0(S_p)}.
\end{multline*}
We estimate the two terms on the right-hand side separately. Developing $g=\exp(\chi_R\gamma)$ in an exponential series, we obtain  for  the first one  
\begin{equation*}
\|\ast g^{-1} F_A^{\perp}g-\ast F_A^{\perp}\|_{C^0(S_p)}\leq C\|\gamma\|_{C^0(S_p)}\leq CR^{\delta''}
\end{equation*}
for some constant $C$ independent of $R$. In the last step we used that by Lemma \ref{lem:normformlem} $\gamma\in H_{-1+\delta'}^2(r^{-1}\,dr)$ together with the continuous embeddings
\begin{equation*}
H_{-1+\delta'}^2(S_p(r^{-1}\,dr))\hookrightarrow H_{-1+\delta''}^2(S_p(r^{-1}\,dr)) \hookrightarrow C^0(S_p)
\end{equation*}
for exponents $0<\delta''<\delta'$. Note that the norm of first embedding is bounded above by $CR^{\delta'-\delta''}$. The decay of the second term follows from
\begin{equation}\label{eq:Delta_Agamma}
\Delta_A(\chi_R\gamma)=\chi_R\Delta_A\gamma+2r\partial_r\chi_R\cdot r\partial_r\gamma+(r\partial_r)^2\chi_R\cdot\gamma,
\end{equation}
where we recall that in some neighborhood of $p$ the term $\Delta_A\gamma$ equals the right-hand side in  Eq.\ \eqref{eq:Poisson}. As discussed before Proposition \ref{prop:gaugetomodel}, it has order of decay $\mathcal O(r^{\delta})$. The last two summands in Eq.\ \eqref{eq:Delta_Agamma} decay as $\mathcal O(r^{\delta''})$  as follows from Proposition \ref{prop:gaugetomodel} and the assumption in Eq.\ \eqref{eq:assumptcutoff} on $\chi_R$.  It remains to show the  estimate  
\begin{equation*}
\|  \ast ([g^{-1}\Phi g\wedge(g^{-1}\Phi g)^{\ast}]-[\Phi\wedge\Phi^{\ast}])\|_{C^0(S_p)}\leq CR^{\delta''},
\end{equation*}
which as before follows from $\|g-\operatorname{id}\|_{C^0(S_p)}\leq CR^{\delta''}$. 
\end{proof}

%
\subsection{Linearization of the Hitchin operator along a complex gauge orbit}\label{lin.hit.ope}

With the aim of finally perturbing the above defined approximate solution $(A_R^{\app},\Phi_R^{\app})$ to an exact solution in mind, we first need to study the linearization of the first equation in Eq.\ \eqref{eq:hitequ} in some detail.
\medskip\\
We fix a sufficiently small  constant $R>0$ and refer to \textsection \ref{subsect:approxsolutions} for the definition of the closed surface $\Sigma_t$, where $\left|t\right|=\rho=\frac{R}{2}$. On $\Sigma_t$ we study the nonlinear Hitchin operator  
\begin{align}\label{eq:nonlinH}
\nonumber\mc H  \colon H^1(\Sigma_t,\Lambda^1\otimes \mf{su}(E)\oplus\Lambda^{1,0}\otimes \mf{sl}(E)) \to L^2(\Sigma_t,\Lambda^2\otimes  \mf{su}(E)
\oplus\Lambda^{1,1}\otimes\mf{sl}(E)),\\
\mc H(A,\Phi) = ( F^\perp_A+  [\Phi \wedge \Phi^*],\bar\partial_A\Phi).
\end{align}
We further consider the orbit map for the action of the group $\mathcal G^c$ of complex gauge transformations,
\begin{equation}\label{orb.map}
\gamma\mapsto \mathcal{O}_{(A,\Phi)}(\gamma)=g^{\ast}(A,\Phi)=( g^{\ast}A,g^{-1}\Phi g),  
\end{equation}
where $\gamma\in H^1(\Sigma_t,i \mf{su}(E))$ and $g=\exp(\gamma)$.  Given a Higgs pair $(A,\Phi)$, i.e.~a solution of $\bar\partial_A\Phi=0$, our goal   is to find a point  in the complex gauge orbit of it at which $\mc H$ vanishes. Since the condition that $\bar{\partial}_A \Phi = 0$ is invariant under  complex gauge transformations, we in fact only need to find a solution $\gamma$ of 
\begin{equation}\label{eq:defmapFt}
\mc{F}(\gamma) := \pr_1 \circ \mathcal{H} \circ \mathcal{O}_{(A,\Phi)}( \gamma) = 0,
\end{equation}
or equivalently, of
\begin{equation*}
F_{g^{\ast}A}^\perp+ [g^{-1} \Phi g\wedge(g^{-1}\Phi g)^{\ast}] = 0, \qquad\textrm{where}\; g = \exp(\gamma).
\end{equation*}
By continuity of the multiplication maps $H^1\cdot H^1\to L^2$ and $H^2\cdot H^1\to H^1$, it is easily seen that the map $\mathcal H$ in Eq.\ \eqref{eq:nonlinH} and the  maps
\begin{align*}
\begin{split}
&\mathcal{O}_{(A,\Phi)}\colon H^2(\Sigma_t,i\mf{su}(E)) \to  H^1(\Sigma_t,\Lambda^1\otimes\mf{su}(E)\oplus\Lambda^{1,0}\otimes\mf{sl}(E)), \label{eq:nonlinmaps}\\ 
&\mc{F}\colon H^2(\Sigma_t,i\mf{su}(E)) \to L^2(\Sigma_t,\Lambda^2\otimes \mf{su}(E)) 
\end{split}
\end{align*}
are all well-defined and smooth. We now compute their linearizations.  First, the differential at $g =\operatorname{id}$ of the orbit map in Eq.\ \eqref{orb.map} is 
\begin{equation*} 
\Lambda_{(A,\Phi)}\gamma=(\bar\partial_A\gamma-\partial_A\gamma,[\Phi\wedge \gamma]).
\end{equation*}
Furthermore, the differential of the Hitchin operator at $(A,\Phi)$ is
\begin{equation}\label{eq:diffHitchinop}
D\mc H \begin{pmatrix} \alpha \\ \varphi \end{pmatrix}= \begin{pmatrix}
 d_A &   [\Phi \wedge {\cdot\,}^{\ast}]+ [\Phi^{\ast}\wedge\cdot\,]\\[0.5ex]
[\Phi\wedge\cdot\,] & \bar{\partial}_A
 \end{pmatrix}\begin{pmatrix} \alpha \\ \varphi \end{pmatrix},
\end{equation}
whence
\begin{equation*} 
(D\mc H \circ \Lambda_{(A,\Phi)})(\gamma) = \begin{pmatrix}(\partial_A\bar\partial_A-\bar\partial_A\partial_A)
\gamma+ [\Phi\wedge[\Phi\wedge\gamma]^{\ast}]+[\Phi^{\ast}\wedge[\Phi\wedge\gamma]]\\[0.5ex] 
[\Phi\wedge(\bar\partial_A\gamma-\partial_A\gamma)]+\bar\partial_A[\Phi\wedge\gamma]
\end{pmatrix}.
\end{equation*}
Using that $\bar\partial_A\Phi=0$, as well as the fact that
$[\Phi\wedge\partial_A\gamma]=0$ for dimensional reasons, the entire second component vanishes. The first component equals $D\mc{F}(\gamma) =i\ast L_{(A ,\Phi )}$, with $L_{(A ,\Phi )}$  the elliptic operator   
\begin{equation}\label{eq:defnlinoperator}
L_{(A ,\Phi )}=\Delta_{A }-i \ast M_{\Phi }\colon \Omega^0(\Sigma_t,i\mathfrak{su}(E))\to \Omega^0(\Sigma_t,i\mathfrak{su}(E)).
\end{equation}
Here we define
\begin{equation}\label{eq:defnlinoperatorMphi}
M_\Phi\gamma:=[\Phi^{\ast}\wedge[\Phi\wedge\gamma]] - [ \Phi \wedge [ \Phi^{\ast}\wedge\gamma]].
\end{equation} 
Observe that the linear operators
\begin{align*}
\Lambda_{(A,\Phi)}\colon\Omega^0(\Sigma_t,i\mf{su}(E))\to\Omega^1(\Sigma_t,\mf{su}(E))\oplus\Omega^{1,0}(\Sigma_t,\mf{sl}(E)), \\
D\mc{F}=\pr_1\circ D\mc H \circ\Lambda_{(A,\Phi)}\colon\Omega^0(\Sigma_t,i\mf{su}(E))\to\Omega^2(\Sigma_t,\mf{su}(E)), \\
\mbox{and} \qquad
L_{(A ,\Phi )} \colon\Omega^0(\Sigma_t,i \su(E)) \rightarrow \Omega^0(\Sigma_t,i \su(E))
\end{align*}
are all bounded from $H^1$ to $L^2$, or $H^2$ to $L^2$ respectively. It is a basic fact, first observed by Simpson  \cite{si88}, that the linearized operator $L_{(A ,\Phi )}$ is nonnegative.

\begin{lemma}\label{positivity}
If $\gamma \in \Omega^0(\Sigma_t,i \su(E))$, then
\begin{equation*} 
\langle   L_{(A ,\Phi )} \gamma, \gamma \rangle_{L^2(\Sigma_t)} =  \|d_A \gamma\|_{L^2(\Sigma_t)}^2 + 2 \| [ \Phi\wedge\gamma ]\|_{L^2(\Sigma_t)}^2 \geq 0.
\end{equation*} 
In particular, $L_{(A ,\Phi )} \gamma =0$ if and only if $d_A\gamma=[\Phi \wedge\gamma ] = 0$.
\end{lemma}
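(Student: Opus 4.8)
The plan is to compute $\langle L_{(A,\Phi)}\gamma,\gamma\rangle_{L^2}$ directly by pairing the two summands of $L_{(A,\Phi)}=\Delta_A-i\ast M_\Phi$ against $\gamma$ and integrating by parts. For the first term, since $\Delta_A=d_A^\ast d_A$ on $0$-forms (with the sign convention implicit in Eq.~\eqref{eq:defnlinoperator}), we get $\langle \Delta_A\gamma,\gamma\rangle_{L^2}=\|d_A\gamma\|_{L^2}^2$; one should note that on the cylindrical-end surface $\Sigma_t$ this integration by parts produces no boundary term because $\Sigma_t$ is closed, so this is routine. The substance is in the second term, where I would show that $\langle -i\ast M_\Phi\gamma,\gamma\rangle_{L^2}=2\|[\Phi\wedge\gamma]\|_{L^2}^2$.

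For that identity, write $M_\Phi\gamma=[\Phi^\ast\wedge[\Phi\wedge\gamma]]-[\Phi\wedge[\Phi^\ast\wedge\gamma]]$ as in Eq.~\eqref{eq:defnlinoperatorMphi}. The key step is to use the invariance of the Killing-form pairing on $\mathfrak{su}(E)$ (equivalently, the ad-invariance of the trace form) together with the adjointness relation $[\Phi^\ast\wedge\cdot]$ versus $[\Phi\wedge\cdot]$ under the combined Lie-algebra and Hodge pairing. Concretely, pairing $[\Phi^\ast\wedge[\Phi\wedge\gamma]]$ against $\gamma$ moves $\Phi^\ast$ across to give (up to the appropriate factor coming from $\ast$ on the relevant form degree) $\langle [\Phi\wedge\gamma],[\Phi\wedge\gamma]\rangle=\|[\Phi\wedge\gamma]\|_{L^2}^2$; the second term $-[\Phi\wedge[\Phi^\ast\wedge\gamma]]$ contributes the same quantity because $\gamma^\ast=-\gamma$ (as $\gamma\in\Omega^0(i\mathfrak{su}(E))$ is skew-hermitian after the $i$-twist) so that $[\Phi^\ast\wedge\gamma]^\ast$ is related to $[\Phi\wedge\gamma]$ by a sign that is absorbed. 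Hence the two terms add rather than cancel, producing the factor $2$. I would carry this out by working pointwise in a local unitary frame, reducing it to the elementary matrix identity $\Tr(X^\ast[[X,\gamma],\gamma]) = \text{(nonneg.\ multiple of)}\ \|[X,\gamma]\|^2$ for $X$ the matrix of $\Phi$ and $\gamma$ skew-hermitian, which is a standard computation using cyclicity of trace.

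Once both terms are evaluated, adding them gives $\langle L_{(A,\Phi)}\gamma,\gamma\rangle_{L^2}=\|d_A\gamma\|_{L^2}^2+2\|[\Phi\wedge\gamma]\|_{L^2}^2\ge 0$, which is the first assertion. The ``in particular'' statement is then immediate: since both terms on the right are nonnegative, their sum vanishes iff each vanishes, i.e.\ iff $d_A\gamma=0$ and $[\Phi\wedge\gamma]=0$; conversely if both vanish then clearly $L_{(A,\Phi)}\gamma=0$ because $L_{(A,\Phi)}\gamma$ is built entirely out of $d_A\gamma$ and the bracket terms (one can also read this off from $\langle L_{(A,\Phi)}\gamma,\gamma\rangle=0$ combined with a Cauchy--Schwarz / self-adjointness argument, but the direct route via the expansion of $L_{(A,\Phi)}$ is cleaner).

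The main obstacle I anticipate is purely bookkeeping: getting the Hodge-$\ast$ factors, the placement of the $i$, and the signs from $\gamma^\ast=-\gamma$ all consistent so that the two bracket contributions genuinely add to $2\|[\Phi\wedge\gamma]\|^2$ rather than cancelling or producing a wrong constant. This is best handled by fixing one explicit convention for the $L^2$ inner product on $\mathfrak{sl}(E)$- and $\mathfrak{su}(E)$-valued forms (say $\langle a,b\rangle=-\int_{\Sigma_t}\Tr(a\wedge \ast b^\ast)$ or the analogous real form) at the outset and then doing the pointwise matrix computation once and for all; there is no analytic difficulty here since $\gamma$ is smooth and $\Sigma_t$ is compact, so all the integrations by parts are valid without any decay or boundary-term discussion.
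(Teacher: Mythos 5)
Your proposal is correct and follows essentially the same route as the proof the paper outsources to \cite[Proposition 5.1]{msww14}: pair $L_{(A,\Phi)}=\Delta_A-i\ast M_\Phi$ against $\gamma$, integrate by parts for the $\Delta_A$-term (no boundary issues since $\Sigma_t$ is closed), and evaluate the $M_\Phi$-term pointwise via $\mathrm{ad}_X^{\ast}=\mathrm{ad}_{X^{\ast}}$, so that the two bracket contributions add to $2\|[\Phi\wedge\gamma]\|_{L^2}^2$ exactly as in Eq.\ \eqref{emmvarvieh}. One small correction: sections of $i\mathfrak{su}(E)$ are trace-free \emph{hermitian}, so $\gamma^{\ast}=+\gamma$ (this is precisely what the proof of Lemma \ref{lem:kernelMphi} uses), not skew-hermitian as you wrote; the slip is harmless, since with $\gamma^{\ast}=\gamma$ one still has $[\Phi^{\ast}\wedge\gamma]=-[\Phi\wedge\gamma]^{\ast}$ pointwise, hence $|[\Phi^{\ast}\wedge\gamma]|=|[\Phi\wedge\gamma]|$, the two terms add rather than cancel, and the same identity also shows $[\Phi\wedge\gamma]=0$ forces $[\Phi^{\ast}\wedge\gamma]=0$, which is the fact needed for the converse direction of the ``in particular'' statement.
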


\begin{proof}
For a proof we refer to \cite[Proposition 5.1]{msww14}.
\end{proof}

In the case of  rank-$2$ Higgs bundles which we are considering  here, we can characterize the nullspace of the algebraically defined operator $M_\Phi$ more closely. We fix $\varphi\in\mf{sl}(2,\C)$ and consider the linear map 
\begin{equation*}
M_\varphi\colon i\su(2) \to i\su(2),\quad\gamma \mapsto [ \varphi^*, [ \varphi, \gamma]] + [\varphi, [ \varphi^*, \gamma]].
\end{equation*}
By a short calculation, 
\begin{equation}\label{emmvarvieh}
\langle M_\varphi \gamma, \gamma \rangle =  \left|[\varphi,\gamma] \right|^2 +   \left|[\varphi^*,\gamma] \right|^2 = 2 \left|[\varphi,\gamma] \right|^2.
\end{equation}
Clearly $M_\varphi$ is hermitian with respect to $\langle \cdot \,, \cdot \rangle$ and satisfies 
$g^{-1}(M_\varphi\gamma)g=M_{g^{-1}\varphi g}g^{-1}\gamma g$ when $g\in \SU(2)$.

\begin{lemma}\label{lem:kernelMphi}
If $\varphi \in \mf{sl}(2,\C)$, then $M_\varphi\colon i\su(2) \to i\su(2)$ is invertible if and only if $[\varphi, \varphi^*] \neq 0$, i.e.\ if the endomorphism $\varphi$ is not normal. 
If $[\varphi,\varphi^*]=0$ for some $0 \neq \varphi \in \mf{sl}(2,\C)$, then $M_\varphi$ has a one-dimensional kernel.
\end{lemma}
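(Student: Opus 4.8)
The plan is to work out the eigenstructure of $M_\varphi$ explicitly using the conjugation-equivariance $g^{-1}(M_\varphi\gamma)g = M_{g^{-1}\varphi g}g^{-1}\gamma g$ for $g\in\SU(2)$, which reduces the problem to a normal form for $\varphi$ under the adjoint action. First I would dispose of the case $[\varphi,\varphi^*]=0$: a normal element of $\mathfrak{sl}(2,\C)$ is, after $\SU(2)$-conjugation, either zero or diagonal, $\varphi = \operatorname{diag}(c,-c)$ with $c\in\C\setminus\{0\}$. For such $\varphi$ one computes directly on the standard basis of $i\su(2)$ — say $H=\operatorname{diag}(i,-i)$ and the two off-diagonal hermitian-times-$i$ generators — that $[\varphi,H]=0$, so $H\in\ker M_\varphi$, while by \eqref{emmvarvieh} the other two basis elements $X$ satisfy $\langle M_\varphi X,X\rangle = 2|[\varphi,X]|^2 > 0$ since $[\varphi,X]\neq 0$. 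Together with hermiticity of $M_\varphi$ this shows $\ker M_\varphi$ is exactly one-dimensional, spanned by the direction that commutes with $\varphi$.

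For the converse direction — invertibility when $[\varphi,\varphi^*]\neq 0$ — by \eqref{emmvarvieh} and hermiticity it suffices to show that no nonzero $\gamma\in i\su(2)$ satisfies $[\varphi,\gamma]=0$; that is, the centralizer of $\varphi$ in $\mathfrak{sl}(2,\C)$ meets $i\su(2)$ only in $0$. Equivalently, I would argue that if some nonzero $\gamma\in i\su(2)$ commutes with $\varphi$, then $\varphi$ is normal. A non-normal $\varphi\in\mathfrak{sl}(2,\C)$ is, up to $\SU(2)$-conjugation and scaling, one of the standard non-semisimple or "generic" forms; in any case its centralizer in $\mathfrak{sl}(2,\C)$ is the one-dimensional space $\C\varphi$ when $\varphi$ is regular, so $\gamma\in\C\varphi\cap i\su(2)$. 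But a nonzero element of $i\su(2)$ that is a complex multiple of $\varphi$ forces $\varphi$ itself to be (a real multiple of) a skew-hermitian matrix times $i$, hence $\varphi^*=\pm\varphi$ up to scaling, so $[\varphi,\varphi^*]=0$ — a contradiction. If $\varphi$ is not regular, i.e. $\varphi=0$, there is nothing non-normal to consider. Hence $[\varphi,\gamma]=0$ with $\gamma\neq 0$ implies $[\varphi,\varphi^*]=0$, and invertibility follows.

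Alternatively, and perhaps more cleanly, I would just pick the normal form of $\varphi$ under $\SU(2)$ directly. Every $\varphi\in\mathfrak{sl}(2,\C)$ can be conjugated by an element of $\SU(2)$ into upper-triangular form $\varphi = \begin{pmatrix} a & b \\ 0 & -a\end{pmatrix}$; then $[\varphi,\varphi^*]\neq 0$ exactly when $b\neq 0$, i.e. $\varphi$ is genuinely non-diagonal after this reduction. On the $3$-dimensional space $i\su(2)$ with its standard basis one then writes out the $3\times 3$ matrix of $M_\varphi$ in terms of $a$ and $b$ and computes its determinant; the computation shows the determinant vanishes precisely when $b=0$. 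Combined with \eqref{emmvarvieh} to identify the kernel direction in the degenerate case, this gives both statements at once.

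The main obstacle is mostly bookkeeping rather than conceptual: one must be careful that the $\SU(2)$-normal form used is legitimate (every $\mathfrak{sl}(2,\C)$ matrix is $\SU(2)$-conjugate to an upper-triangular one, via Schur), and that "normal $\iff$ diagonalizable by $\SU(2)$ with purely the diagonal part" is applied correctly, since over $\C$ one must not confuse normality of the matrix with semisimplicity. The one genuinely delicate point is ruling out, in the invertible case, a nonzero commuting $\gamma\in i\su(2)$ when $\varphi$ is non-normal but still has a repeated eigenvalue (the nilpotent-type case $a=0$, $b\neq0$): there the centralizer in $\mathfrak{gl}$ is two-dimensional, so one checks by hand that its intersection with $\mathfrak{sl}(2,\C)$ is $\C\varphi$ and that $\C\varphi\cap i\su(2)=0$ because $\varphi$ nilpotent is not a multiple of any nonzero skew-hermitian matrix.
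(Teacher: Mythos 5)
Your proposal is correct, and it rests on the same pivot as the paper's proof, namely the identity \eqref{emmvarvieh}: since $M_\varphi$ is hermitian and positive semidefinite, this identifies $\ker M_\varphi$ with $\{\gamma\in i\su(2)\mid [\varphi,\gamma]=0\}$. Where you differ is in the linear algebra after that reduction: you put $\varphi$ into an $\SU(2)$-normal form (diagonal in the normal case, upper triangular in general) and count centralizer dimensions, handling the nilpotent case $a=0$, $b\neq0$ separately. The paper never normalizes $\varphi$ at all: if $0\neq\gamma\in\ker M_\varphi$ then $[\varphi,\gamma]=0$, and taking adjoints (using $\gamma^*=\gamma$) gives $[\varphi^*,\gamma]=0$ for free; since a nonzero traceless hermitian $\gamma$ has two distinct real eigenvalues, $\varphi$ and $\varphi^*$ are simultaneously diagonal in the eigenbasis of $\gamma$, hence $\varphi^*=\lambda\varphi$ and $\varphi$ is normal. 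Because the argument pivots on the spectrum of $\gamma$ rather than of $\varphi$, your ``genuinely delicate'' repeated-eigenvalue case simply never arises. For the converse the paper does not diagonalize either: it exhibits $\varphi+\varphi^*$ (or $i\varphi$ if that vanishes) as an explicit kernel element and notes the kernel of a nonzero normal $\varphi$ is at most one-dimensional. Your route buys an explicit matrix form of $M_\varphi$ (which would be needed for quantitative spectral information, but is not needed here) at the cost of extra bookkeeping. Three small repairs, none structural: $\operatorname{diag}(i,-i)$ lies in $\su(2)$, not $i\su(2)$ (take $\operatorname{diag}(1,-1)$); in your last paragraph ``skew-hermitian'' should read ``hermitian''; and when you conclude one-dimensionality of the kernel from positivity of the quadratic form on two basis vectors, you should state explicitly that positive semidefiniteness via \eqref{emmvarvieh} identifies $\ker M_\varphi$ with the commutant of $\varphi$ in $i\su(2)$, since positivity on basis vectors alone does not control the kernel of a quadratic form.
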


\begin{proof}
Let $0\neq \varphi \in \mf{sl}(2,\C)$ be given and suppose that $0\neq \gamma\in \ker M_\varphi$. Then Eq.\ \eqref{emmvarvieh} shows that $[\varphi,\gamma]=0$. Since $\gamma^*=\gamma$ it follows that $0=[\varphi,\gamma]^*=-[\varphi^*,\gamma]$. Because $\gamma\neq0$, the identities $[\varphi,\gamma]= [\varphi^*,\gamma]=0$ can only be satisfied if $\varphi^*=\lambda \varphi$ for some $\lambda\in\C$. But then $[\varphi^*,\varphi]=0$, and $\varphi$ is normal. Conversely, suppose that $\varphi\neq0$ is normal. Then $\ker M_\varphi$ is at most one-dimensional. 
Consider $\gamma=\varphi+\varphi^*\in i\su(2)$. It satisfies $[\varphi,\gamma]=0$ by normality of $\varphi$, hence is contained in $\ker M_\varphi$. In the case where $\varphi+\varphi^*\neq0$ this shows that $\ker M_\varphi$ is exactly one-dimensional. If $\varphi+\varphi^*=0$ we can draw the same conclusion using $0\neq\gamma=i\varphi\in i\su(2)$.
\end{proof}

%
%
\subsection{Cylindrical Dirac-type operators}\label{subsect:cyloperators}

In order to carry out our intended gluing construction we need to have good control on the linearized operator $L_R:=L_{(A_R^{\app},\Phi_R^{\app})}$ in Eq.\ \eqref{eq:defnlinoperator}, and in particular we need a bound for the norm of its inverse $G_R:=L_R^{-1}$ as an operator acting on $L^2(\Sigma_t)$. As it turns out, the operator $L_R$ is strictly positive, and thus the operator norm of $G_R$ is bounded by $\lambda_R^{-1}$, the inverse of the smallest eigenvalue of $L_R$. Recall from \textsection \ref{subsect:degeneratingfamilies} that the Riemann surface $\Sigma_t$ is the disjoint union of a ``thick" region and a number of  ``long" euclidean cylinders $\mathcal C_p(R)$ of length $2\left|\log R\right|$, where $p\in\mathfrak p$.  We therefore expect that $\lambda_R^{-1}\sim \left|\log R\right|^2=:T^2$, i.e.~that this quantity diverges as $R\searrow 0$.
\medskip\\
To show that the   scale of $\lambda_R^{-1}$ takes place as expected, we shall make use of the Cappell--Lee--Miller  gluing theorem (cf.~\cite{clm96}) and its generalization to  small perturbations of constant coefficient operators due to Nicolaescu (cf.~\cite{ni02}). 
The general setup considered there is that of  a family of  manifolds $M_T$, $T_0\leq T\leq\infty$, each containing a long cylindrical ``neck" of length $\sim T= \left|\log R\right|$, thought of as being glued from two disjoint manifolds $M_T^{\pm}$ along the boundaries of a pair of cylindrical ends. We then consider  a   self-adjoint   first-order  Dirac-type operator    $\mathfrak  D_T$ on a hermitian  vector bundle over  $M_T$ (here ``Dirac-type"  means  that the square of $\mathfrak  D_T$ is a generalized Laplacian).  The statement of the Cappell--Lee--Miller  gluing theorem  is that under suitable assumptions   to which we return below, $\mathfrak  D_T$ admits two types of eigenvalues, referred to as \emph{large} and \emph{small} eigenvalues. Large eigenvalues are those of   order  of decay $O(T^{-1})$, while small ones are those decaying as $o(T^{-1})$. For $T\to\infty$ the subspace of $L^2$ spanned by the eigenvectors to small eigenvalues is  parametrized in a sense we do not make precise here   by the  kernel of the limiting operator $\mathfrak  D=\mathfrak  D_{\infty}$. Hence the occurence of small eigenvalues can be viewed as caused by the fact that the dimension of the kernel of $\mathfrak  D_T$ is in general  unstable as $T$ varies.
\medskip\\ 
Of particular interest to us   is a version of the generalized  Cappell--Lee--Miller  gluing theorem for a $\Z_2$-graded Dirac-type operator $\mathfrak D$ of the form considered in Eq.\ \eqref{eq:diracoperator} below. It acts on sections of the $\Z_2$-graded hermitian vector bundle $\hat E=\hat E^+\oplus \hat E^-$. Under the assumption that the operator $\slashed D\colon C^{\infty}(\hat E^+)\to C^{\infty}(\hat E^-)$ has trivial kernel one can show that for sufficiently large values of the gluing parameter $T$ the associated Laplacian $\slashed D_T^{\ast}\slashed D_T$ does not have any small eigenvalues at all and hence admits a bounded inverse of norm $\sim T^2$.  This is the content of Theorem \ref{thm:applicationclm} below. An application of it will almost immediately give us the desired control on the norm of the  operator $G_R$.
\medskip\\ 
Let us therefore digress here to introduce the setup for  Theorem     \ref{thm:applicationclm} and then  to explain how it is applied in the present context. Following closely \cite[\textsection 1--2]{ni02}, we consider an oriented Riemannian manifold $(\hat N,\hat g)$ with a cylindrical end modeled by $\R^+\times N$, where    $(N,g)$ is an oriented compact Riemannian manifold. We let $\pi\colon \R^+\times N\to N$ denote the canonical projection and introduce $\tau$ as the outgoing longitudinal coordinate on $\R^+\times N$. We further assume that $\hat g=d\tau^2\oplus g$ along the cylindrical end. A vector bundle $\hat E$ over $\hat N$ is called \emph{cylindrical}  if there exists a vector bundle $E\to N$ and a bundle isomorphism $\hat\vartheta\colon \hat E|_{\R^+\times N}\to \pi^{\ast}E$. A section $\hat u$ of $\hat E$ is called \emph{cylindrical} if there exists a section $u$ of $E$ such that along the cylindrical end, $\hat\vartheta \hat u=\pi^{\ast} u$. In this case we shall simply write $\hat u=\pi^{\ast} u$ and $u=\partial_{\infty}\hat u$. A cylindrical vector bundle comes equipped with a canonical first order partial differential operator $\partial_{\tau}$ acting on sections of $  \hat E|_{\R^+\times N}$. It is uniquely defined by the properties that $\partial_{\tau}(f  v)=\frac{\partial f}{\partial \tau}  v+f\partial_{\tau}v$ for every section $v$ of $\hat E|_{\R^+\times N}$ and  function $f\in C^{\infty}(\R^+\times N)$, and $\partial_{\tau}\hat u=0$  for every cylindrical section $\hat u$. 
\medskip\\
The vector bundles to be considered here are  $\Z_2$-graded   cylindrical hermitian vector bundles $(\hat E,\hat H)$. By definition, this means that $\hat E$  is a  cylindrical vector bundle,     the hermitian metric $\hat H$ is along the cylindrical end of the form  $\hat H=\pi^{\ast}H$   for some hermitian metric $H$ on $E$, and $\hat E$  splits into the orthogonal sum $\hat E= \hat E^+\oplus \hat E^-$ of cylindrical vector bundles. We also suppose that $\hat E$ carries a Clifford structure and let $G\colon E^+\to E^-$ denote the bundle isomorphism given by Clifford multiplication by $d\tau$. 
\medskip\\
Let $\hat E \to \hat N$ be a  $\Z_2$-graded cylindrical hermitian vector bundle. A  first order partial differential operator     $\mathfrak D\colon C^{\infty}(\hat E)\to C^{\infty}(\hat E)$   is called a \emph{$\Z_2$-graded cylindrical Dirac-type operator} if with respect to the $\Z_2$-grading of $\hat E$   it takes the form
\begin{equation}\label{eq:diracoperator}
\mathfrak D=\begin{pmatrix}
0& 	\slashed  D^{\ast}\\ \slashed D&0
\end{pmatrix},
\end{equation}
such that along the cylindrical end $\slashed D=G(\partial_\tau-D)$ for a self-adjoint Dirac-type operator $D\colon C^{\infty}(E^+)\to C^{\infty}( E^+) $. More generally, we need to consider the perturbed operator $\mathfrak D+ \mathfrak B$, where $\slashed D$ is replaced by the operator $\slashed D+ B$ (and $\slashed  D^{\ast}$ by $\slashed  D^{\ast}+B^{\ast}$). Here we assume that the perturbation $B$ is an exponentially decaying operator of order $0$, i.e.\ that there exist constants $C,\lambda>0$ such that
\begin{equation}\label{eq:expdecaycond}
\sup\{ | B(x)|\mid x\in [\tau,\tau+1]\times N \}\leq Ce^{-\lambda |\tau|}
\end{equation}
for all $\tau\in \R^+$.
\medskip\\
Now let a pair $(\hat N_i,\hat g_i)$, $i=1,2$, of oriented Riemannian manifolds with cylindrical ends be given. We endow the cylinder $(-\infty,0)\times N_2$ with the outgoing coordinate $-\tau<0$ and impose the following compatibility assumptions. First we assume that   there exists an orientation reversing isometry $\varphi\colon (  N_1,  g_1)\to ( N_2, g_2)$.  Let then   $\hat E_i\to \hat N_i$  be a pair of  $\Z_2$-graded cylindrical hermitian vector bundles such that there exists an isometry $\gamma\colon E_1\to E_2$ of hermitian vector bundles     covering $\varphi$ and respecting the gradings. Further, let $\mathfrak D_i$, $i=1,2$, be $\Z_2$-graded cylindrical Dirac-type operators as in Eq.\ \eqref{eq:diracoperator}. We assume that  the operators  $\slashed D_i$ appearing  there are of the form  $\slashed D_i=G_i(\partial_\tau-D_i)$ along the cylindrical ends, and that (with respect to the  identification  of the vector bundles along the ends induced by   $\gamma$)  $G_1+G_2=L_1+L_2=0$. We can then form   for each $T>0$ the smooth manifold $N_T$ obtained by attaching $\hat N_1\setminus  (T+1,\infty)\times N_1$ to $\hat N_2\setminus  (-\infty,-T-1)\times N_2$ using the the orientation preserving identification
\begin{equation*}
	[T+1,T+2]\times N_1\to [-T-2,-T-1]\times N_2,\quad (t,x)\mapsto (t-2T-3,\varphi(x)).
\end{equation*}
The   $\Z_2$-graded cylindrical hermitian  vector bundles $\hat E_i$ are glued in a similar way to obtain a  $\Z_2$-graded hermitian vector bundle $  E_T= E_T^+\oplus   E_T^-$ over $N_T$. We also note that   the  cylindrical operators $\mathfrak D_i$ combine to give a   $\Z_2$-graded Dirac-type operator $\mathfrak D_T$ on the  vector bundle $E_T$. We shall again allow for a perturbed version of the operator  $\mathfrak D_T$. Thus  let  $\mathfrak B_i$, $i=1,2$, be two perturbations satisfying the exponential decay condition in Eq.\ \eqref{eq:expdecaycond}. We fix a smooth cutoff function $\chi\colon\R\to [0,1]$ with support in $(-\infty,\frac{3}{4}]$ and such that $\chi(\tau)\equiv 1$ if $\tau\leq  \frac{1}{4}$, and set $\chi_T(\tau):=\chi(|\tau|-T)$. As before, the two perturbed operators $\mathfrak D_i+\chi_T\mathfrak B_i$   combine into a Dirac-type operator on $E_T$. From now on, the notation  $\mathfrak D_T$ will always refer to this perturbed Dirac-type operator, which we write as
\begin{equation*}
\mathfrak D_{T}=\begin{pmatrix}
0& 	\slashed  D_T^{\ast}\\ \slashed D_T&0
\end{pmatrix}.
\end{equation*}
We also introduce the notation $\mathfrak D_{i,\infty}:=\mathfrak D_i+ \mathfrak B_i$, $i=1,2$, and write
\begin{equation}\label{eq:diracoperatorpert}
\mathfrak D_{i,\infty}=\begin{pmatrix}
0& 	\slashed  D_{i,\infty}^{\ast}\\ \slashed D_{i,\infty}&0
\end{pmatrix}.
\end{equation}
We  finally  define the \emph{extended $L^2$ space} $L_{\operatorname{ext}}^2(\hat N,\hat E)$ to consist of all sections $\hat u$ of $\hat E$   such that  there exists an $L^2$ section  $u_{\infty}$ of $E$ satisfying
\begin{equation*}
\hat u - \pi^{\ast}  u_{\infty}\in L^2(\hat N,\hat E).
\end{equation*}
We also note that $ u_{\infty}$ is uniquely determined by $\hat u$ and so there is a well-defined map
\begin{equation*}
\partial_{\infty}\colon L_{\operatorname{ext}}^2(\hat N,\hat E)\to L^2(N,E),\quad \hat u\mapsto u_{\infty},
\end{equation*}
called \emph{asymptotic trace map}. The following theorem is  proved in \cite{ni02} as a particular instance of the generalized Cappell--Lee--Miller  gluing theorem.

\begin{theorem}\label{thm:applicationclm}
For $i=1,2$, let $\mathfrak D_{i,\infty}$ be a  $\Z_2$-graded   Dirac-type operator 	 on the cylindrical vector bundle $\hat E_i\to \hat N_i$ as in Eq.\ \eqref{eq:diracoperatorpert}. Suppose that the kernel $K_i^+\subseteq L_{\operatorname{ext}}^2(\hat N_i,\hat E_i^+)$ of the operator $\slashed D_{i,\infty}$ is trivial for $i=1,2$. Then there is $T_0>0$ and a constant $C>0$  such that the operator $\slashed D_T^{\ast} \slashed D_T$ is bijective for all $T>T_0$ and its inverse $(\slashed D_T^{\ast}\slashed D_T)^{-1}\colon L^2(N_T,E_T^+)\to L^2(N_T,E_T^+)$ satisfies
\begin{equation*}
\| (\slashed D_T^{\ast}\slashed D_T)^{-1} \|_{\mathcal L(L^2,L^2)} \leq CT^2. 
\end{equation*}
\end{theorem}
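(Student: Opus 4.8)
The plan is to obtain this statement as a direct consequence of Nicolaescu's form of the Cappell--Lee--Miller gluing theorem \cite{ni02} (building on \cite{clm96}); what remains is to explain how the $\Z_2$-grading hypothesis on the limiting kernels feeds into that machinery. First I would recall the spectral dichotomy it furnishes: there exist constants $T_0>0$ and $c>0$ so that, for every $T>T_0$, the eigenvalues of the glued Dirac-type operator $\mathfrak D_T$ split into \emph{large} ones, of absolute value at least $c/T$, and \emph{small} ones, of size $o(T^{-1})$, and --- this is the crucial refinement --- the span of the small-eigenvalue eigensections is, as $T\to\infty$, canonically modelled, via the asymptotic trace maps $\partial_{\infty}$ together with the orientation-reversing identification of boundary data along the neck, on a subspace of $K_1\oplus K_2$, where $K_i=\ker_{L_{\operatorname{ext}}^2}\mathfrak D_{i,\infty}$ is the \emph{extended} $L^2$ kernel of the $i$-th limiting operator. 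Since each $\mathfrak D_{i,\infty}$ is $\Z_2$-graded as in \eqref{eq:diracoperatorpert}, one has $K_i=K_i^+\oplus K_i^-$ with $K_i^+=\ker_{L_{\operatorname{ext}}^2}\slashed D_{i,\infty}$, and the small eigenvalues of the \emph{positive} part $\slashed D_T^{\ast}\slashed D_T$ acting on $L^2(N_T,E_T^+)$ are governed by $K_1^+\oplus K_2^+$ alone.

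Next I would feed in the hypothesis $K_1^+=K_2^+=0$. By the parametrization just recalled, for $T>T_0$ the operator $\slashed D_T^{\ast}\slashed D_T$ then has no small eigenvalues; in particular $\ker\slashed D_T=0$, and since $\slashed D_T^{\ast}\slashed D_T$ is a nonnegative self-adjoint elliptic operator on the closed manifold $N_T$, hence Fredholm of index zero, it is bijective. For the norm bound I would pass back to $\mathfrak D_T$: an eigenfunction $u$ of $\slashed D_T^{\ast}\slashed D_T$ with eigenvalue $\lambda>0$ yields the eigenfunction $u\oplus\lambda^{-1/2}\slashed D_T u$ of $\mathfrak D_T$ with eigenvalue $\sqrt\lambda$, which is nonzero and, by the previous step, not small; thus $\sqrt\lambda\geq c/T$, i.e.\ $\lambda\geq c^2T^{-2}$. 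Therefore $\|(\slashed D_T^{\ast}\slashed D_T)^{-1}\|_{\mathcal L(L^2,L^2)}$ equals the reciprocal of the smallest eigenvalue of $\slashed D_T^{\ast}\slashed D_T$ and is $\leq c^{-2}T^2$, which is the claim with $C=c^{-2}$.

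The step I expect to be delicate --- and which is the genuine content imported from \cite{ni02,clm96} --- is the parametrization of the small-eigenvalue subspace used in the first paragraph: one must verify that the relevant limiting kernels are the \emph{extended} $L^2$ kernels rather than the $L^2$ kernels, that the gluing respects the $\Z_2$-grading so that vanishing of the $+$-parts $K_i^+$ suffices in place of vanishing of all of $K_i$, and that the order-zero perturbations $B_i$ subject only to the exponential decay \eqref{eq:expdecaycond} do not disturb these estimates. These are precisely the hypotheses under which \cite[\textsection 1--2]{ni02} establishes the gluing theorem, so invoking its results closes the argument with no further work. An alternative, more self-contained route would be a neck-stretching/contradiction argument proving the Poincar\'e-type inequality $\|\slashed D_T u\|_{L^2}^2\geq cT^{-2}\|u\|_{L^2}^2$ on $E_T^+$, with any bubbling-off configuration landing in $K_1^+\oplus K_2^+=0$; but tracking the sharp power $T^{-2}$ through such an argument essentially reproves the pertinent case of \cite{ni02}.
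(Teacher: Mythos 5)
Your proposal is correct and follows essentially the same route as the paper: the paper's proof is simply the observation that the statement is precisely what is established in \cite[\textsection 5.B]{ni02} (with the roles of $E_T^+$ and $E_T^-$ interchanged), which is the same invocation of Nicolaescu's refinement of the Cappell--Lee--Miller theorem that you make. The extra details you supply --- the large/small eigenvalue dichotomy, the parametrization of the small-eigenvalue space by the extended $L^2$ kernels $K_i^+$, and the passage from eigenvalues of $\mathfrak D_T$ to those of $\slashed D_T^{\ast}\slashed D_T$ --- are accurate and merely unpack what the cited reference provides.
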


\begin{proof}
This is precisely the statement shown in \cite[\textsection 5.B]{ni02} (there with the roles of the subbundles $E_T^+$ and $E_T^-$ interchanged).	
\end{proof}

We remark  that the statement of the theorem and its proof extend straightforwardly  to the case of a manifold glued in the above way from  a finite number of  manifolds with cylindrical ends. Let us now explain how the  family of   Riemann surfaces $\Sigma_t$   fits into this general framework. We first  recall that the noded Riemann surface $\Sigma_0$ is endowed with a Riemannian metric which with respect to the complex coordinate $z$ near each puncture $p\in\mathfrak p$ takes the form $g=\frac{|dz|^2}{|z|^2}$. Passing to the cylindrical coordinates $(\tau,\vartheta)=(-\log r,-\theta)$ shows that $g=d\tau^2+d\vartheta^2$ and hence each connected component of $(\Sigma_0,g)$ is a Riemannian manifold with cylindrical ends of the form considered above. The discussion at the end of \textsection \ref{subsect:degeneratingfamilies} makes it clear that the pair $(E,H)$ is a cylindrical hermitian  vector bundle over $\Sigma_0$. Similarly, the bundle of $\mathfrak{sl}(E)$-valued differential forms is a  cylindrical hermitian vector bundle; it decomposes into  the subbundles of forms of  even, respectively odd total degree, and hence is $\Z_2$-graded.   Also, the gluing construction described  in \textsection \ref{subsect:degeneratingfamilies} is a special case of the one  above   for general cylindrical vector bundles. 
\medskip\\
Let us now introduce the relevant  Dirac-type operator and check that it is indeed an exponentially small perturbation of a cylindrical Dirac-type operator. We let $(A,\Phi)$ denote the exact solution of Eq.\ \eqref{eq:hitequ} on the noded surface $\Sigma_0$  as initially fixed in \textsection \ref{subsect:approxsolutions}. Its  gives rise to  the elliptic complex 
\begin{multline}\label{eq:ellcomplex}
0\longrightarrow \Omega^0(\Sigma_0,\mathfrak{su}(E))\stackrel{L_1}{\longrightarrow} \Omega^1(\Sigma_0,\mathfrak{su}(E))\oplus\Omega^{1,0}(\Sigma_0,\mathfrak{sl}(E))\\
\stackrel{L_2}{\longrightarrow} \Omega^2(\Sigma_0,\mathfrak{su}(E))\oplus \Omega^{2}(\Sigma_0,\mathfrak{sl}(E)) \longrightarrow 0,
\end{multline}
where
\begin{equation*}
L_1\gamma=(d_A\gamma,[\Phi\wedge\gamma])
\end{equation*}
and $L_2=D\mathcal H$ is the differential of the Hitchin operator as in Eq.\ \eqref{eq:diffHitchinop}, i.e.\
\begin{equation*}
L_2(\alpha,\varphi)=\begin{pmatrix}d_A\alpha+[\Phi\wedge\varphi^{\ast}]+[\Phi^{\ast}\wedge\varphi]\\
\bar\partial_A\varphi+[\Phi\wedge\alpha]\end{pmatrix}.
\end{equation*}
Decomposing   $\Omega^{\ast}(\Sigma,\mathfrak{sl}(E))$ into forms of even, respectively odd total degree, we can combine the operators $L_1$ and $L_2$  into the Dirac-type operator $\mathfrak D_{\infty}$ on $\Sigma_0$ given by
\begin{equation}\label{eq:dirtypeopdinfty}
\mathfrak D_{\infty}:=\begin{pmatrix} 0& L_1^{\ast}+L_2\\
L_1+L_2^{\ast}&0\end{pmatrix}.
\end{equation}
Following Hitchin \cite[pp.\ 85--86]{hi87}, and using  his notation, it is convenient  to identify
\begin{equation*}
\Omega^2(\Sigma_0,\mathfrak{su}(E))\cong	 \Omega^0(\Sigma_0,\mathfrak{su}(E))\qquad \textrm{and}\qquad \Omega^2(\Sigma_0,\mathfrak{sl}(E))\cong	 \Omega^0(\Sigma_0,\mathfrak{sl}(E))
\end{equation*}
using the Hodge-$\ast$ operator, and
\begin{equation*}
\Omega^1(\Sigma_0,\mathfrak{su}(E))\cong	 \Omega^{0,1}(\Sigma_0,\mathfrak{sl}(E))
\end{equation*}
via the projection  $A\mapsto  \pi^{0,1}A$ to the $(0,1)$-component. We further identify $(\gamma_1,\gamma_2)\in \Omega^0(\Sigma_0,\mathfrak{su}(E))\oplus \Omega^0(\Sigma_0,\mathfrak{su}(E))$ with $\psi_1=\gamma_1+i\gamma_2\in \Omega^0(\Sigma_0,\mathfrak{sl}(E))$. With these identifications understood, the operator $L_1+L_2^{\ast}$ is the map
\begin{align}\label{eq:operatorl1l2ast}
\nonumber L_1+L_2^{\ast}\colon \Omega^{0}(\Sigma_0,\mathfrak{sl}(E))\oplus  \Omega^{0}(\Sigma_0,\mathfrak{sl}(E))\to  \Omega^{0,1}(\Sigma_0,\mathfrak{sl}(E))
\oplus  \Omega^{1,0}(\Sigma_0,\mathfrak{sl}(E)),\\
(\psi_1,\psi_2)\mapsto \begin{pmatrix}
 \bar\partial_A\psi_1+[\Phi^{\ast}\wedge\psi_2]\\
\partial_A\psi_2+[\Phi\wedge\psi_1]
\end{pmatrix}.
\end{align}
As shown in Proposition \ref{prop:expsmallperturbation} below, the operator $\mathfrak D_{\infty}$ arises as an exponentially small perturbation of the operator
\begin{equation*}
\hat{\mathfrak D}_{\infty}=\begin{pmatrix} 0& \hat L_1^{\ast}+\hat L_2\\
\hat L_1+\hat L_2^{\ast}&0\end{pmatrix}.
\end{equation*}
defined in the same way, with $(A,\Phi)$ replaced by some model solution  \begin{equation*}
(A^{\mod},\Phi^{\mod})=\Big(\beta( \frac{dz}{z}- \frac{d\bar z}{\bar z}),\varphi  \frac{dz}{z}\Big)
\end{equation*}
as in Eq.\ \eqref{eq:modsol} along each cylindrical neck. We check that the latter operator is in fact cylindrical. Introducing the complex coordinate $\zeta=\tau+ i \vartheta$, we have the identities
\begin{equation*}
d\tau = -\frac{dr}{r},\quad d\theta=-d\vartheta,\quad  \frac{dz}{z}=-d\zeta,	\quad  \frac{d\bar z}{\bar z}=-d\bar \zeta.
\end{equation*}
Hence $\hat L_1+\hat L_2^{\ast}$ (and similarly $\hat L_1+\hat L_2^{\ast}$) can indeed be written as the cylindrical differential operator
\begin{multline*}
\hat L_1+\hat L_2^{\ast}=\frac{\sqrt 2}{2} G(\partial_{\tau} - D) \colon\\
 (\psi_1,\psi_2)\mapsto \frac{1}{2}\begin{pmatrix}
 \partial_{\tau} \psi_1 d\bar \zeta\\
  \partial_{\tau} \psi_2  d\zeta
\end{pmatrix}	- \begin{pmatrix}
(\frac{i}{2}\partial_{\vartheta} \psi_1+ [\beta,\psi_1]- [\varphi^{\ast },\psi_2] ) d\bar \zeta \\
(-\frac{i}{2} \partial_{\vartheta} \psi_2- [\beta,\psi_2]- [\varphi,\psi_2] ) d\zeta 
\end{pmatrix}.	
\end{multline*} 
Here
\begin{equation}\label{eq:operatorD}
D(\psi_1,\psi_2)= 2\begin{pmatrix}
\frac{i}{2}\partial_{\vartheta} \psi_1+ [\beta,\psi_1]- [\varphi^{\ast },\psi_2]   \\
-\frac{i}{2} \partial_{\vartheta} \psi_2- [\beta,\psi_2]- [\varphi,\psi_2]  \end{pmatrix}
\end{equation}
and $G(\psi_1,\psi_2)=\frac{\sqrt 2}{2} (\psi_1 d\bar \zeta,\psi_2  d\zeta)$ denotes Clifford multiplication by $d\tau$.

\begin{proposition}\label{prop:expsmallperturbation}
The difference $B:=L_1+L_2^{\ast}- \hat L_1- \hat L_2^{\ast}$  satisfies the estimate stated in Eq.\ \eqref{eq:expdecaycond}, i.e.\ $B$ is exponentially decaying in $\tau$.
\end{proposition}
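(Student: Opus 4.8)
The plan is to recognise $B$ as a \emph{zeroth-order} bundle map whose coefficients are precisely the Biquard--Boalch remainders $\alpha := A - A^{\mod}$ and $\varphi := \Phi - \Phi^{\mod}$ along each cylindrical neck, and then to read off its decay from Lemma~\ref{thm:asymptbehav} after passing to the cylindrical coordinate $\tau = -\log r$.

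Concretely, since on $\mathfrak{sl}(E)$-valued sections $\bar\partial_A = \bar\partial_{A^{\mod}} + [\alpha^{0,1}\wedge\,\cdot\,]$ and $\partial_A = \partial_{A^{\mod}} + [\alpha^{1,0}\wedge\,\cdot\,]$, while $[\Phi^{\ast}\wedge\,\cdot\,] - [(\Phi^{\mod})^{\ast}\wedge\,\cdot\,] = [\varphi^{\ast}\wedge\,\cdot\,]$ and $[\Phi\wedge\,\cdot\,] - [\Phi^{\mod}\wedge\,\cdot\,] = [\varphi\wedge\,\cdot\,]$, formula~\eqref{eq:operatorl1l2ast} gives
\begin{equation*}
B(\psi_1,\psi_2) = \begin{pmatrix} [\alpha^{0,1}\wedge\psi_1] + [\varphi^{\ast}\wedge\psi_2]\\[0.4ex] [\alpha^{1,0}\wedge\psi_2] + [\varphi\wedge\psi_1] \end{pmatrix},
\end{equation*}
and dually $L_1^{\ast}+L_2-\hat L_1^{\ast}-\hat L_2 = B^{\ast}$, so that $\mathfrak D_{\infty}-\hat{\mathfrak D}_{\infty}$ is the antidiagonal operator with entries $B^{\ast}$ and $B$. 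In particular $B$ contains no derivatives, and pointwise $|B(x)| \leq C_0\,(|\alpha|_g + |\varphi|_g)$ with $|\cdot|_g$ the fibre norm of the cylindrical metric $g = d\tau^2 + d\vartheta^2$ and $C_0$ depending only on the Lie-bracket structure constants; the fibrewise identifications via the Hodge-$\ast$ operator and the projection $A\mapsto\pi^{0,1}A$ entering~\eqref{eq:operatorl1l2ast} are bounded isomorphisms and do not affect this estimate. Now by Lemma~\ref{thm:asymptbehav} (applied to $(A,\Phi)$ as already normalised there; alternatively Proposition~\ref{prop:diaggauge} gives $\varphi\equiv 0$ and $\alpha$ diagonal with entries $\mathcal O(r^\delta)$) one has $(\alpha,\varphi)\in C^0_{-1+\delta}$ near each $p\in\mathfrak p$. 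Since $\tfrac{dz}{z} = -d\zeta = -(d\tau + i\,d\vartheta)$ has constant norm in $g$, this means $|\alpha|_g + |\varphi|_g = \mathcal O(r^\delta) = \mathcal O(e^{-\delta\tau})$ on each half-infinite cylinder $\mathcal C_0^{\pm}$, whence $\sup\{|B(x)| : x \in [\tau,\tau+1]\times N\} \leq C\,e^{-\delta\tau}$ for all $\tau\geq 0$. This is exactly~\eqref{eq:expdecaycond} with $\lambda = \delta$.

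The only genuinely delicate point is this last bookkeeping step: one has to keep track of the fact that the weighted space $C^0_{-1+\delta}$ of~\cite{bibo04} is calibrated against the reference metric $|dz|^2$, under which $\tfrac{dz}{z}$ grows like $r^{-1}$, so that the weight $-1+\delta$ corresponds to genuine decay $\mathcal O(r^\delta)$ once measured in the cylindrical metric $g$; and one must make sure the model solution $(A^{\mod},\Phi^{\mod})$ appearing in $\hat{\mathfrak D}_\infty$ on a given neck is exactly the one to which $(A,\Phi)$ is asymptotically close there, so that $(\alpha,\varphi)$ really is the decaying remainder of Lemma~\ref{thm:asymptbehav} and carries no residual non-decaying part. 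Beyond this, the argument needs no further analytic input, the remaining verifications being routine.
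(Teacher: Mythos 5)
Your proposal is correct and follows essentially the same route as the paper: both identify $B$ as a zeroth-order operator whose coefficients are the remainders $(A-A^{\mod},\Phi-\Phi^{\mod})$, invoke Lemma \ref{thm:asymptbehav} to get $\mathcal O(r^{\delta})$ decay of these coefficients relative to the frame $\tfrac{dz}{z}$, and translate $r^{\delta}=e^{-\delta\tau}$ into the exponential bound \eqref{eq:expdecaycond} in cylindrical coordinates. Your explicit bookkeeping of the weight shift between the $|dz|^2$-calibrated space $C^0_{-1+\delta}$ and the cylindrical metric is exactly the (implicit) content of the paper's statement that the coefficient functions lie in $C^0_{\delta}$.
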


\begin{proof}
We set
\begin{equation*}\label{eq:expdecaycond1}
(A^{\app},\Phi^{\app})=(A^{\mod},\Phi^{\mod})+\Big(\beta_1( \frac{dz}{z}- \frac{d\bar z}{\bar z}),\varphi_1  \frac{dz}{z}\Big)
\end{equation*}
for suitable endomorphism valued maps $\beta_1$ and $\varphi_1$.  Lemma \ref{thm:asymptbehav} shows that $\beta_1,\varphi_1\in C_{\delta}^0$ for some $\delta>0$. It follows that the operator $B$, which is given by
\begin{equation*}
B(\psi_1,\psi_2)= \begin{pmatrix}
  (-[\beta_1,\psi_1]+ [\varphi_1^{\ast },\psi_2] )d\bar \zeta  \\
(  [\beta_1,\psi_2]+ [\varphi_1,\psi_2] )d\zeta \end{pmatrix},
\end{equation*}
satisfies the asserted exponential decay estimate with respect to cylindrical coordinates.
\end{proof}

Our next goal is to show that the space $\ker(L_1+L_2^{\ast})\cap L_{\operatorname{ext}}^2(\Sigma_0)$ is trivial. We prepare this result with the following proposition.

\begin{proposition}\label{prop:asympttrace}
Suppose  $(\psi_1,\psi_2)\in \ker(L_1+L_2^{\ast})\cap L_{\operatorname{ext}}^2(\Sigma_0)$. Then its asymptotic trace $\partial_{\infty}(\psi_1,\psi_2)$ is of the form
\begin{equation*}
\partial_{\infty}(\psi_1,\psi_2)=\left( \begin{pmatrix}
 c_1&0 \\0&-c_1
\end{pmatrix},\begin{pmatrix}
 c_2&0 \\0&-c_2
\end{pmatrix}  \right)	
\end{equation*}
for constants $c_1,c_2\in\C$.
\end{proposition}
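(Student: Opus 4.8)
The plan is to study the equation $(L_1+L_2^*)(\psi_1,\psi_2)=0$ on each half-infinite cylinder $\mathcal C_p^\pm$, where by Proposition~\ref{prop:expsmallperturbation} the operator is an exponentially small perturbation of the cylindrical model operator $\hat L_1+\hat L_2^*=\tfrac{\sqrt2}{2}G(\partial_\tau-D)$ with $D$ as in Eq.~\eqref{eq:operatorD}, built from the diagonal model data $(\beta,\varphi)=(\alpha,C)$. First I would note that, since $(\psi_1,\psi_2)\in L^2_{\operatorname{ext}}(\Sigma_0)$, its asymptotic trace $u_\infty=\partial_\infty(\psi_1,\psi_2)$ is a genuine $L^2$ section on the cross-sectional circle $N=S^1$ which lies in the kernel of the limiting operator $D$ (this is the standard fact from Cappell--Lee--Miller / Nicolaescu theory: an element of the extended $L^2$ kernel is asymptotic to a bounded translation-invariant solution, i.e.\ an element of $\ker D$ on the cross-section). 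So the problem reduces to computing $\ker D$ for the operator in Eq.~\eqref{eq:operatorD} on $C^\infty(S^1,\mathfrak{sl}(2,\mathbb C)\oplus\mathfrak{sl}(2,\mathbb C))$.

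Next I would diagonalize. Writing $D(\psi_1,\psi_2)=2\bigl(\tfrac{i}{2}\partial_\vartheta\psi_1+[\alpha,\psi_1]-[C^*,\psi_2],\,-\tfrac{i}{2}\partial_\vartheta\psi_2-[\alpha,\psi_2]-[C,\psi_2]\bigr)$ — here $\alpha=\operatorname{diag}(\alpha_p,-\alpha_p)$, $C=\operatorname{diag}(C_p,-C_p)$ — I would Fourier-expand $\psi_j=\sum_{n\in\mathbb Z}\psi_{j,n}e^{in\vartheta}$ and further decompose each $\psi_{j,n}\in\mathfrak{sl}(2,\mathbb C)$ into its diagonal part and its two off-diagonal entries, which are the eigenvectors of $\operatorname{ad}_\alpha$ and $\operatorname{ad}_C$ simultaneously (since $\alpha,C$ are both diagonal). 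On the diagonal part, $[\alpha,\cdot]=[C,\cdot]=0$, so the equation $Du_\infty=0$ becomes simply $\partial_\vartheta(\text{diag part of }\psi_1)=0$ and $\partial_\vartheta(\text{diag part of }\psi_2)=0$ — i.e.\ these are arbitrary constants, giving exactly the two complex parameters $c_1,c_2$ in the statement. On each off-diagonal channel the relevant $2\times2$ (or coupled) system has the form $\tfrac{i}{2}(n)\,x\pm(\text{nonzero constant built from }\alpha_p,C_p)\,y=0$ etc.; the key point is that assumption (A1) guarantees $C_p\neq 0$, so the zeroth-order coefficient is nonzero, and a short computation shows the only solution in each off-diagonal Fourier mode is zero. (This is exactly where (A1) enters, as flagged in the discussion after the assumptions: $\varphi$ diagonalizable with nonzero eigenvalues forces $M_\Phi$ — equivalently $\operatorname{ad}_C$ — to be invertible on the off-diagonal part, cf.\ Lemma~\ref{lem:kernelMphi}.) Hence $\ker D$ consists precisely of pairs of diagonal constant matrices, which is the asserted form of $\partial_\infty(\psi_1,\psi_2)$.

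The step I expect to be the main obstacle — or at least the one needing the most care — is the first one: justifying rigorously that $u_\infty=\partial_\infty(\psi_1,\psi_2)$ actually lies in $\ker D$, rather than merely being the asymptotic limit of some $L^2_{\operatorname{loc}}$ solution. One has to separate variables along the cylindrical end, write the solution mode-by-mode in $\tau$ as a sum of exponentials $e^{\mu\tau}$ with $\mu$ running over the spectrum of $D$ (this uses ellipticity and the product structure of the metric/operator near the ends, together with Proposition~\ref{prop:expsmallperturbation} to absorb the exponentially decaying perturbation $B$ into a standard perturbative argument), and observe that the $\mu=0$ mode contributes precisely $\pi^*u_\infty$ with $u_\infty\in\ker D$, while all $\mu\neq0$ modes that survive the $L^2_{\operatorname{ext}}$ constraint are exponentially decaying. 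I would cite the setup of~\cite{ni02} (and~\cite{clm96}) for the structural statement that $\partial_\infty$ maps $\ker\bigl((L_1+L_2^*)\bigr)\cap L^2_{\operatorname{ext}}$ into $\ker D$, and then the remaining work is the purely algebraic linear-algebra computation of $\ker D$ on $S^1$ sketched above. Finally, since $\Sigma_0$ may have several nodes, I would remark that the argument is carried out independently on each cylindrical end $\mathcal C_p^\pm$ and the constants $(c_1,c_2)$ may a priori depend on $p$ and on the $\pm$ end; the stated conclusion is the pointwise one on each end, which is all that is needed for the injectivity argument of Lemma~\ref{lemma:limitkernelnew}.
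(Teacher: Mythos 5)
Your proposal is correct and follows essentially the same route as the paper: one cites \cite{ni02} for the fact that $\partial_{\infty}$ maps the extended $L^2$ kernel of $L_1+L_2^{\ast}$ into $\ker D$, then Fourier-decomposes in $\vartheta$ and splits each mode into diagonal and off-diagonal parts, with the diagonal zero-mode giving the constants $c_1,c_2$ and assumption (A1) (via the nonvanishing determinant $-(\tfrac{j}{2}-2\alpha)^2-4|C|^2$) killing the off-diagonal modes. The only difference is that the paper carries out the off-diagonal determinant computation explicitly rather than appealing to Lemma \ref{lem:kernelMphi}, but this is cosmetic.
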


\begin{proof}
By \cite[p.\ 169]{ni02},  the space $\partial_{\infty}\ker(L_1+L_2^{\ast})$ of asymptotic traces is a subspace of $\ker D$ (with $D$ the operator in Eq.\ \eqref{eq:operatorD}), and hence it suffices  to check that the elements of the latter  have the asserted form. Passing to the Fourier decomposition $(\psi_1,\psi_2)=(\sum_{j\in\Z}\psi_{1,j}e^{ij\vartheta},\sum_{j\in\Z}\psi_{2,j}e^{ij\vartheta})$, where $\psi_{i,j}\in\mathfrak{sl}(2,\C)$, the equation $D(\psi_1,\psi_2)=0$ is equivalent to the   system of linear equations
\begin{equation}
\begin{pmatrix}\label{eq:linearsystem}
-\frac{j}{2}  \psi_{1,j} + [\beta,\psi_{1,j}]- [\varphi^{\ast },\psi_{2,j}]  \\
\frac{j}{2}\psi_{1,j}  - [\beta,\psi_{2,j}]- [\varphi,\psi_{1,j}]  
\end{pmatrix}=0
\end{equation}
for  $j\in\Z$. By assumption, the endomorphisms $\beta=\begin{pmatrix} \alpha&0\\0&-\alpha\end{pmatrix}$ and $\varphi=\begin{pmatrix} C&0\\0&-C\end{pmatrix}$ are diagonal.  It follows that $D$ acts invariantly on diagonal, respectively off-diagonal endomorphisms, and hence it suffices to consider both cases separately. Suppose first that
\begin{equation*}
(\psi_{1,j},\psi_{2,j})=\left(\begin{pmatrix}
c_{1,j}&0  \\0& -c_{1,j}  
\end{pmatrix},\begin{pmatrix}
c_{2,j}&0  \\0& -c_{2,j}  
\end{pmatrix}\right)
\end{equation*}
is diagonal. Then Eq.\ \ref{eq:linearsystem} has a non-trivial solution if and only if $j=0$, which  is then of the asserted form. Now let
\begin{equation*}
(\psi_{1,j},\psi_{2,j})=\left(\begin{pmatrix}
0&d_{1,j} \\e_{1,j} & 0
\end{pmatrix},\begin{pmatrix}
0&d_{2,j} \\e_{2,j}  & 0  
\end{pmatrix}\right)
\end{equation*}
for  $d_{i,j},e_{i,j}\in\C $. Then Eq.\ \ref{eq:linearsystem} is equivalent to
\begin{equation*}
\begin{pmatrix} 
-\frac{j}{2} +2\alpha    & -2\bar C \\
-2C &\frac{j}{2}  - 2\alpha
\end{pmatrix}\begin{pmatrix} d_{1,j}\\ d_{2,j} \end{pmatrix}  =0,
\end{equation*}
with a  similar linear equation being satisfied by    $(e_{1,j},e_{2,j})$. The determinant of the above matrix is $-(\frac{j}{2}  - 2\alpha)^2-4 C\bar C <0$, using  that  $C\neq0$ as  satisfied by assumption (A1). It hence does not admit any non-trivial solution, proving the proposition.
\end{proof}

\begin{lemma}\label{lemma:limitkernelnew}
The operator $L_1+L_2^{\ast}$, considered as a densely defined operator on $L_{\operatorname{ext}}^2(\Sigma_0)$, has trivial kernel.
\end{lemma}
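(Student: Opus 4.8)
The plan is to combine the positivity of the Dirac-type Laplacian on the extended $L^2$ space with Proposition \ref{prop:asympttrace}, and then to run a unique-continuation / integration-by-parts argument that exploits assumption (A3). Suppose $(\psi_1,\psi_2)\in\ker(L_1+L_2^{\ast})\cap L_{\operatorname{ext}}^2(\Sigma_0)$. First I would observe that, by the standard Weitzenb\"ock/Simpson computation underlying Lemma \ref{positivity} (applied now in the form of the operator $L_1+L_2^{\ast}$ rather than $L_{(A,\Phi)}$), an element of the kernel that lies in $L^2$ (not merely $L^2_{\operatorname{ext}}$) would have to satisfy both $L_1(\cdot)=0$ and $L_2^{\ast}(\cdot)=0$ separately, hence would be a genuine harmonic element of the deformation complex \eqref{eq:ellcomplex}; the point of allowing extended $L^2$ is precisely that $\ker(L_1+L_2^\ast)$ might pick up the constant asymptotic traces described in Proposition \ref{prop:asympttrace}. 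So the strategy is: (i) use Proposition \ref{prop:asympttrace} to pin down the asymptotic trace to the two-complex-parameter family $(c_1,c_2)$ of diagonal pairs on each cylindrical end; (ii) subtract off an explicit model section realizing this trace to reduce to the genuinely $L^2$ case; (iii) rule out nonzero $L^2$ solutions using (A3).

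For step (ii) I would use the matching condition (A2) together with the involution symmetry of the bundle over $\mathcal C_0$ (recalled at the end of \textsection\ref{subsect:degeneratingfamilies}) to check that the diagonal constant profile $\bigl(\operatorname{diag}(c_1,-c_1),\operatorname{diag}(c_2,-c_2)\bigr)$ extends consistently across each node — i.e. that it is the asymptotic trace of a global cylindrical section — and that the difference $(\psi_1,\psi_2)-\pi^\ast(\text{model})$ is then in $L^2(\Sigma_0)$ and still lies in $\ker(L_1+L_2^\ast)$ up to an exponentially decaying error coming from $B=L_1+L_2^\ast-\hat L_1-\hat L_2^\ast$ (Proposition \ref{prop:expsmallperturbation}); a small elliptic-regularity / fixed-point argument absorbs the error, exactly as in the Cappell--Lee--Miller framework. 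Alternatively, and perhaps more cleanly, I would argue directly: since $\partial_\infty(\psi_1,\psi_2)$ is diagonal and the $0$-th Fourier mode on each end decouples into diagonal and off-diagonal parts (as in the proof of Proposition \ref{prop:asympttrace}), the decomposition $\psi_i = \psi_i^{\mathrm{diag}}+\psi_i^{\mathrm{off}}$ is preserved by $L_1+L_2^\ast$ near the nodes, and one analyzes the two summands separately — the off-diagonal part is genuinely $L^2$ by the determinant computation in Proposition \ref{prop:asympttrace}, while the diagonal part reduces to a scalar $\bar\partial$/$\partial$ system.

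For step (iii), the decisive input is (A3): $q=\det\Phi$ has a simple zero $z_0\in\Sigma_0$. Near such a zero $\Phi$ is, up to gauge, of the normal form with nilpotent-plus-small behaviour, so the fibrewise operator $M_\Phi$ (Lemma \ref{lem:kernelMphi}) is \emph{invertible} on a neighborhood of $z_0$ away from $z_0$ itself and has only a one-dimensional kernel at $z_0$; more precisely, $[\Phi,\psi_1]=0$ forces $\psi_1$ to be (pointwise, off $z_0$) a multiple of the regular semisimple part of $\Phi$, and the simplicity of the zero makes the set where this degenerates discrete. Integrating $\langle(L_1+L_2^\ast)(\psi_1,\psi_2),(\psi_2,\psi_1)\rangle$ over $\Sigma_0$ — there are no boundary terms once we are in honest $L^2$ after step (ii) — yields, via the Simpson-type identity, that $\bar\partial_A\psi_1+[\Phi^\ast\wedge\psi_2]=0$ and $\partial_A\psi_2+[\Phi\wedge\psi_1]=0$ each hold, and pairing appropriately forces $[\Phi\wedge\psi_1]=0$ and $\bar\partial_A\psi_1=0$, so $\psi_1$ is a holomorphic section of $\mathfrak{sl}(E)$ commuting with $\Phi$ everywhere. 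At the simple zero of $q$ this commutant is one-dimensional and spanned by $\Phi$ itself (in the relevant frame), which is not $\bar\partial_A$-parallel in the right sense unless $\psi_1=0$; one then gets $\psi_2=0$ as well from $\partial_A\psi_2=0$ plus the $L^2$ condition and $\deg$-constraints. This is exactly the mechanism by which \cite{msww14} deduces irreducibility, and the paper flags it in the discussion of (A3).

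The main obstacle I anticipate is step (ii): making rigorous the passage from ``asymptotic trace is a diagonal constant'' to ``after subtracting an explicit model, the remainder is genuinely in $L^2$ and still (approximately) in the kernel,'' while keeping careful track of the exponentially small perturbation $B$ and of the behaviour across \emph{all} the nodes simultaneously (the bundle is only assumed cylindrical and invariant under the node-interchanging involution, so one must verify the constant profile is compatible with the gluing). Once one is in honest $L^2$, step (iii) is the familiar Simpson positivity argument refined by (A3), and step (i) is already done in Proposition \ref{prop:asympttrace}; so the real work — and the place where assumptions (A1) and (A2) get used again beyond Proposition \ref{prop:asympttrace} — is the reduction to the compactly-supported-error, $L^2$ setting.
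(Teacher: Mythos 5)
Your overall strategy (use Proposition \ref{prop:asympttrace}, then a Simpson-type integration argument plus (A3)) points in the right direction, but both of the steps you flag as the ``real work'' have genuine gaps, and the first one is in fact an unnecessary detour. In step (ii) the difference $(\psi_1,\psi_2)-\pi^{\ast}(\mathrm{model})$ is \emph{not} in $\ker(L_1+L_2^{\ast})$: it solves an inhomogeneous equation with right-hand side $-B\,\pi^{\ast}(\mathrm{model})$, and ``a small elliptic-regularity / fixed-point argument absorbs the error'' is not an argument --- there is nothing to contract against, since you are not solving an equation but trying to retain membership in a kernel. The paper avoids this reduction entirely: it works directly with the extended-$L^2$ solution, writes the Hitchin/Simpson identity $\partial\langle\bar\partial_A\psi_1,\psi_1\rangle+\bar\partial\langle\partial_A\psi_1,\psi_1\rangle=-|\bar\partial_A\psi_1|^2-|\partial_A\psi_1|^2-|[\Phi\wedge\psi_1]|^2-|[\Phi^{\ast}\wedge\psi_1]|^2$, integrates over the truncated surfaces $\Sigma_S$, and observes that the boundary term $\int_{\partial\Sigma_S}\langle d_A\psi_1,\psi_1\rangle$ tends to zero as $S\to\infty$ precisely because the asymptotic trace is a \emph{constant diagonal} matrix (Proposition \ref{prop:asympttrace}), hence annihilated by $d_A$ in the limit. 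No matching across the node, no use of (A2) or the involution, and no passage to honest $L^2$ is needed; the conclusion is the full set of identities $\bar\partial_A\psi_j=\partial_A\psi_j=[\Phi\wedge\psi_j]=[\Phi^{\ast}\wedge\psi_j]=0$ for $j=1,2$ (the case $j=2$ by taking adjoints, i.e.\ replacing $\Phi$ by $-\Phi$).

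Your step (iii) would also fail as written. You only extract $\bar\partial_A\psi_1=0$ and $[\Phi\wedge\psi_1]=0$, and then claim vanishing at the simple zero $x_0$ of $q$; but at $x_0$ the endomorphism $\Phi(x_0)$ is a \emph{nonzero nilpotent}, whose commutant in $\mathfrak{sl}(2,\C)$ is one-dimensional and spanned by $\Phi(x_0)$ itself, so commuting with $\Phi$ does not force $\psi_1(x_0)=0$; the phrases ``not $\bar\partial_A$-parallel in the right sense'' and ``deg-constraints'' do not close this. What makes the paper's finish work is exactly the extra information you are missing: with $d_A\psi_1=0$ (both $\partial_A$ and $\bar\partial_A$) and \emph{both} brackets $[\Phi\wedge\psi_1]=[\Phi^{\ast}\wedge\psi_1]=0$, one passes to the hermitian and skew parts $\lambda_1=\psi_1+\psi_1^{\ast}$, $\lambda_2=i(\psi_1-\psi_1^{\ast})\in\Omega^0(i\mathfrak{su}(E))$; these have constant pointwise norm by covariant constancy and lie pointwise in $\ker M_{\Phi}$, while at $x_0$ the non-normality of $\Phi(x_0)$ and Lemma \ref{lem:kernelMphi} give $\ker M_{\Phi(x_0)}=0$, so $\lambda_i(x_0)=0$ and hence $\lambda_i\equiv0$ globally. (Equivalently, a nonzero nilpotent together with its adjoint generate $\mathfrak{sl}(2,\C)$, so the two bracket conditions alone force $\psi_1(x_0)=0$, but one still needs $d_A\psi_1=0$ to propagate this from the single point $x_0$ to all of $\Sigma_0$.)
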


\begin{proof}
We divide the proof into two steps.
\begin{step}
Suppose $(\psi_1,\psi_2)\in \ker(L_1+L_2^{\ast})\cap L_{\operatorname{ext}}^2(\Sigma_0)$. Then $d_A\psi_j= [\Phi  \wedge   \psi_j ]= [\Phi^{\ast} \wedge   \psi_j]=0$ for $j=1,2$.
\end{step}

For a closed Riemann surface, this statement is shown in \cite[pp.\ 85--86]{hi87}. It carries over with some modifications to the present case of a noded Riemann surface $\Sigma_0$.  By Eq.\ \eqref{eq:operatorl1l2ast},  $\psi_1+ \psi_2\in \ker(L_1+L_2^{\ast})$ if and only if
\begin{equation}\label{eq:systempsi}
\begin{cases}
0=& \bar\partial_A\psi_1+[\Phi^{\ast}\wedge\psi_2],\\
0=& \partial_A\psi_2+[\Phi\wedge\psi_1].	
\end{cases}
\end{equation} 
Differentiating the first equation and using that $ \partial_A\Phi^{\ast}=0$  yields 
\begin{equation*}
0= \partial_A\bar\partial_A\psi_1-[\Phi^{\ast}\wedge  \partial_A\psi_2] =\partial_A\bar\partial_A\psi_1+[\Phi^{\ast}\wedge  [\Phi \wedge   \psi_1]].
\end{equation*}
It follows that
\begin{eqnarray*}
\partial\langle \bar\partial_A\psi_1,\psi_1\rangle	&=&\langle \partial_A\bar\partial_A\psi_1,\psi_1 \rangle -\langle \bar\partial_A\psi_1,\bar\partial_A\psi_1\rangle \\
&=&- \langle [\Phi^{\ast}\wedge  [\Phi \wedge   \psi_1]],\psi_1\rangle - \langle \bar\partial_A\psi_1,\bar\partial_A\psi_1\rangle \\
&=& - \left|  [\Phi \wedge   \psi_1] \right|^2 -\left|\bar\partial_A\psi_1 \right|^2.
\end{eqnarray*}
Similarly, using the general identity
\begin{equation*}
\bar\partial_A\partial_A\psi  + \partial_A\bar\partial_A\psi=[F_A  \wedge   \psi]
\end{equation*}
for   $\psi\in  \Omega^{0}(\Sigma_0,\mathfrak{sl}(E))$ together with the equation $F_A+[\Phi  \wedge   \Phi^{\ast}]=0$, we obtain that
\begin{eqnarray*}
\lefteqn{\bar \partial\langle \partial_A\psi_1,\psi_1\rangle}\\
	&=&\langle \bar\partial_A\partial_A\psi_1,\psi_1 \rangle -\langle  \partial_A\psi_1, \partial_A\psi_1\rangle \\
&=& \langle [F_A   \wedge   \psi_1],\psi_1\rangle -\langle \partial_A\bar\partial_A\psi_1,\psi_1 \rangle -\langle  \partial_A\psi_1, \partial_A\psi_1\rangle \\
&=& - \langle  [[\Phi \wedge   \Phi^{\ast}]\wedge \psi_1],\psi_1\rangle +\langle [\Phi^{\ast}\wedge  [\Phi \wedge   \psi_1]],\psi_1\rangle -\langle  \partial_A\psi_1, \partial_A\psi_1\rangle\\
&=& -\left|  [\Phi^{\ast} \wedge   \psi_1] \right|^2  -\left| \partial_A\psi_1 \right|^2.
\end{eqnarray*}
Here the Jacobi identity has been used in the last step. Adding both equations yields
\begin{equation}\label{eq:delplusdelbar}
\begin{split}
\partial\langle \bar\partial_A\psi_1,\psi_1\rangle + \bar \partial\langle \partial_A\psi_1,\psi_1\rangle=\\
  -\left|\bar\partial_A\psi_1 \right|^2 -\left| \partial_A\psi_1 \right|^2 - \left|  [\Phi \wedge   \psi_1] \right|^2-\left|  [\Phi^{\ast} \wedge   \psi_1] \right|^2.
\end{split}  
\end{equation}
We aim to show  that the integral over $\Sigma_0$ of the left-hand side vanishes, which requires  to introduce   some notation.  For   $p\in\mathfrak p$, denote by $\mathcal C_p^{\pm}(0)$ the two connected components of $\mathcal C_p(0)$. We endow these  half-infinite cylinders   with cylindrical  coordinates $(\tau,\vartheta)$, $0\leq\tau<\infty$, as before. For $S>0$ we denote by $\mathcal C_p^{\pm}(S)$ the subcylinders of points $(\tau,\vartheta)\in \mathcal C_p^{\pm}(0)$ with $ \tau\geq S$. Let $\Sigma_S:=\Sigma_0\setminus\bigcup_{p\in\mathfrak p}\mathcal C_p^{\pm}(S)$. By Stoke's theorem, it  follows that
\begin{multline*}\label{eq:greenid}
\int_{\Sigma_S} \partial\langle \bar\partial_A\psi_1,\psi_1\rangle + \bar \partial\langle \partial_A\psi_1,\psi_1\rangle =  \int_{\partial \Sigma_S}\langle \bar\partial_A\psi_1,\psi_1\rangle +  \langle \partial_A\psi_1,\psi_1\rangle \\
=  \int_{\partial \Sigma_S}\langle d_A\psi_1,\psi_1\rangle.
\end{multline*}
Now as $S\to\infty$, $\left.\psi_1\right|_{\tau=S}$ $L^2$-converges to its asymptotic trace $\partial_{\infty}\psi_1 \in \Omega^0(S^1,\mathfrak{sl}(2,\C))$, which by Proposition \ref{prop:asympttrace} is of the form 
\begin{equation*}
\psi_1(\infty)=\begin{pmatrix}
c_1&0\\0&-c_1	
\end{pmatrix}
\end{equation*}
for some constant $c_1\in\C$. Hence $d_A(\partial_{\infty}\psi_1(\infty))=0$ and we conclude that 
\begin{equation*}
\int_{\Sigma_0} \partial\langle \bar\partial_A\psi_1,\psi_1\rangle + \bar \partial\langle \partial_A\psi_1,\psi_1\rangle=\lim_{S\to\infty}\int_{\partial \Sigma_S}\langle d_A\psi_1,\psi_1\rangle=0.
\end{equation*}
Eq.\ \eqref{eq:delplusdelbar} now shows that
\begin{equation*}
 \bar\partial_A\psi_1    =  \partial_A\psi_1=   [\Phi  \wedge   \psi_1] =   [\Phi^{\ast} \wedge   \psi_1] =0.
\end{equation*}
Next,  taking the hermitian adjoint of Eq.\ \eqref{eq:systempsi}, we obtain 
\begin{equation*} 
\begin{cases}
0=& \partial_A\psi_1^{\ast}-[\Phi \wedge\psi_2^{\ast}],\\
0=& \bar\partial_A\psi_2^{\ast}-[\Phi^{\ast}\wedge\psi_1^{\ast}].	
\end{cases}
\end{equation*}
Thus after replacing the solution $(A,\Phi)$ of the self-duality equations with $(A,-\Phi)$, we obtain by the same reasoning as before that 
\begin{equation*}
 \bar\partial_A\psi_2^{\ast}    =  \partial_A\psi_2^{\ast} =   [\Phi  \wedge   \psi_2^{\ast}] =   [\Phi^{\ast} \wedge   \psi_2^{\ast}] =0.
\end{equation*}
Altogether we have thus shown that $d_A\psi_j= [\Phi  \wedge   \psi_j ]= [\Phi^{\ast} \wedge   \psi_j]=0$ for $j=1,2$, as claimed.

\begin{step}
We prove the lemma.
\end{step}

Let $(\psi_1,\psi_2)\in \ker(L_1+L_2^{\ast})\cap L_{\operatorname{ext}}^2(\Sigma_0)$, hence $d_A\psi_j= [\Phi  \wedge   \psi_j ]= [\Phi^{\ast} \wedge   \psi_j]=0$ for $j=1,2$ by Step 1. We prove that $\psi_1=0$ by showing separately that  $\lambda_1:=\psi_1+\psi_1^{\ast}\in \Omega^0(\Sigma_0,i\mathfrak{su}(E))$ and $\lambda_2:=i(\psi_1-\psi_1^{\ast})\in \Omega^0(\Sigma_0,i\mathfrak{su}(E))$ vanish. The reasoning for $\psi_2$ is entirely similarly. Differentiation shows that
\begin{equation*}
d \left|\lambda_1\right|^2=2\langle d_A\lambda_1,\lambda_1\rangle=0,
\end{equation*}
and hence  the function  $|\lambda_1|^2\equiv C$ is constant on $\Sigma_0$. We further notice that $[\Phi  \wedge   \psi_1 ]= [\Phi^{\ast} \wedge   \psi_1]=0$ implies   $[\Phi  \wedge   \lambda_1 ] =0$.  Hence  $\lambda_1(x)\in\ker M_{\Phi(x)}$ for every $x\in\Sigma_0$, where $M_{\Phi}\colon \Omega^0(\Sigma_0,i\mathfrak{su}(E))\to \Omega^0(\Sigma_0,i\mathfrak{su}(E))$  is the linear operator defined in Eq.\ \eqref{eq:defnlinoperatorMphi}.  We claim that $\ker M_{\Phi(x_0)}=\{0\}$ for at least one $x_0\in\Sigma_0$.  By  the assumption (A3) made in \textsection \ref{subsect:approxsolutions},  $\det\Phi$ has a simple zero in at least one point $x_0\in\Sigma_0$. With respect to a local holomorphic coordinate and a local trivialization of $E$,   we may write $\Phi(x_0)=\varphi(x_0)\, dz^2$ for some endomorphism $\varphi(x_0)\in \SL(2,\C)$ satisfying $\det \varphi(x_0)=0$. Now $ \varphi(x_0)\neq 0$ for otherwise the Taylor expansion of $ \varphi$ around $x_0$ would start with linear or higher order terms, and then that of $\det \varphi$  would not involve a linear term, contradicting the simplicity of the zero.  Moreover, we claim that $[\varphi(x_0),\varphi(x_0)^{\ast}]\neq0$, i.e.\ that $\varphi(x_0)$ is not normal. Since conjugation by a unitary endomorphism preserves this property, it suffices to assume that $\varphi(x_0)$ has upper triangular form, and hence is equal to
\begin{equation*}
\varphi(x_0)=\begin{pmatrix}	
0&\nu\\0&0	
\end{pmatrix}
\end{equation*}
for some $\nu\in\C\setminus\{0\}$. It follows that
\begin{equation*}
[\varphi(x_0),\varphi(x_0)^{\ast}]=\begin{pmatrix}	
|\nu|^2&0\\0&-|\nu|^2	
\end{pmatrix}\neq0.
\end{equation*}
Lemma \ref{lem:kernelMphi} now shows that   $\ker M_{\varphi(x_0)}$ is trivial.  It follows that    $C=\left|\lambda_1(x_0)\right|^2=0$ and therefore  $\lambda_1$ has to vanish identically. That   $\lambda_2=0$ follows along the same line of arguments. This finishes the proof of   the lemma.
\end{proof}

Recall the construction of the approximate solution $(A_R^{\app},\Phi_R^{\app})$ to parameter $0<R<1$ as described in \textsection \ref{subsect:approxsolutions}. As before, let $T=-\log R$. Replacing $(A,\Phi)$ with  $(A_R^{\app},\Phi_R^{\app})$ in the elliptic complex in Eq.\ \eqref{eq:ellcomplex}, we obtain  the $\Z_2$-graded Dirac-type operator
\begin{equation*}
\mathfrak D_T:=\begin{pmatrix} 0& L_{1,T}^{\ast}+L_{2,T}\\
L_{1,T}+L_{2,T}^{\ast}&0\end{pmatrix}
\end{equation*}
on the closed surface $\Sigma_t$. Note that for  $T=\infty$ this equals the operator $\mathfrak D_{\infty}$ in Eq.\ \eqref{eq:dirtypeopdinfty}. We set
\begin{equation}\label{eq:Laplacianapp}
\begin{split}
\mathcal D_T:=(L_{1,T}+L_{2,T}^{\ast})^{\ast}(L_{1,T}+L_{2,T}^{\ast})\\
=L_{1,T}^{\ast}L_{1,T} +L_{2,T}L_{1,T}+L_{1,T}^{\ast}L_{2,T}^{\ast}+L_{2,T} L_{2,T}^{\ast}.
\end{split}
\end{equation}
In general, it does not hold that $L_{2,T}L_{1,T}=0$ since $(A_R^{\app},\Phi_R^{\app})$ need not be an exact solution. The following lemma is now an immediate consequence of Theorem 	\ref{thm:applicationclm}.

\begin{lemma}\label{lem:mainestimate}
There  exist constants $T_0>0$ and   $C>0$  such that the operator $\mathcal D_T$ is bijective for all $T>T_0$ and its inverse $\mathcal D_T^{-1}\colon L^2(\Sigma_t)\to L^2(\Sigma_t)$ satisfies
\begin{equation*}
\| \mathcal D_T^{-1} \|_{\mathcal L(L^2,L^2)} \leq CT^2. 
\end{equation*}
\end{lemma}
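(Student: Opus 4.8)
The plan is to recognize $\mathcal D_T$ as the square $\slashed D_T^{\ast}\slashed D_T$ of (one half of) the $\Z_2$-graded Dirac-type operator $\mathfrak D_T$ attached to the approximate solution $(A_R^{\app},\Phi_R^{\app})$, and then to invoke Theorem \ref{thm:applicationclm} in its version (mentioned just after that theorem) for a manifold glued from finitely many pieces with cylindrical ends — here the $k+1$ connected components of $\Sigma_0\setminus\mathfrak p$. Indeed $\slashed D_T=L_{1,T}+L_{2,T}^{\ast}$, so $\mathcal D_T=\slashed D_T^{\ast}\slashed D_T$ by Eq.\ \eqref{eq:Laplacianapp}, and the two assertions of the lemma — bijectivity of $\mathcal D_T$ for $T>T_0$ and the bound $\|\mathcal D_T^{-1}\|_{\mathcal L(L^2,L^2)}\le CT^2$ — are verbatim the conclusion of Theorem \ref{thm:applicationclm}. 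What must be done is therefore only to check that its hypotheses hold.

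First I would check that the family $\mathfrak D_T$ fits the abstract framework of \textsection \ref{subsect:cyloperators}. On each neck $\mathcal C_p(R)$ the pair $(A_R^{\app},\Phi_R^{\app})$ coincides, by construction (cf.\ Eq.\ \eqref{eq:apprsol} and the discussion following it), with the model solution $(A_p^{\mod},\Phi_p^{\mod})$ on the inner region $\{0<\left|z\right|\le\frac{3R}{4}\}$, so near the cut-locus $\mathfrak D_T$ agrees exactly with the cylindrical Dirac-type operator $\hat{\mathfrak D}_{\infty}$ of \textsection \ref{subsect:cyloperators}; on the thick part it equals the operator $\mathfrak D_{\infty}$ built from the exact solution $(A,\Phi)$; and the passage between the two is governed by the cut-off $\chi_R$ applied to $\gamma$, which by Lemma \ref{lem:normformlem} is exponentially small in $T=-\log R$. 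Together with Proposition \ref{prop:expsmallperturbation}, which identifies $\mathfrak D_{\infty}-\hat{\mathfrak D}_{\infty}$ as an exponentially decaying zeroth-order perturbation, this exhibits $\mathfrak D_T$ as a glued operator of exactly the form $\hat{\mathfrak D}_i+\chi_T\mathfrak B_i$ to which Theorem \ref{thm:applicationclm} applies. (That $L_{2,T}L_{1,T}$ need not vanish is harmless here: the obstruction is of order zero, so $\mathfrak D_T$ is still Dirac-type; and the limiting operators $\mathfrak D_{i,\infty}$, being built from the genuine solution $(A,\Phi)$, are honest $\Z_2$-graded Dirac-type operators.)

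The one substantive hypothesis is that the kernel of $\slashed D_{i,\infty}$ in the extended $L^2$ space $L_{\operatorname{ext}}^2(\hat N_i,\hat E_i^+)$ be trivial for each piece $\hat N_i$. This is where the earlier analysis pays off: by Lemma \ref{lemma:limitkernelnew} the operator $L_1+L_2^{\ast}$ built from $(A,\Phi)$ has trivial kernel on $L_{\operatorname{ext}}^2(\Sigma_0)$, and since it is a differential operator it respects the decomposition of $\Sigma_0\setminus\mathfrak p$ into its connected components, so its restriction to each $\hat N_i$ has trivial kernel in $L_{\operatorname{ext}}^2(\hat N_i,\hat E_i^+)$ as well. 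With all hypotheses of Theorem \ref{thm:applicationclm} in force, the bijectivity of $\mathcal D_T=\slashed D_T^{\ast}\slashed D_T$ for $T>T_0$ and the bound $\|\mathcal D_T^{-1}\|_{\mathcal L(L^2,L^2)}\le CT^2$ follow at once. The only point requiring genuine care is the one in the second paragraph — confirming that the gauge-theoretic cut-off built into $(A_R^{\app},\Phi_R^{\app})$ really translates into an admissible $\chi_T$-localized, exponentially small perturbation of the glued cylindrical operator; the identification $\mathcal D_T=\slashed D_T^{\ast}\slashed D_T$ and the no-small-eigenvalues input from Lemma \ref{lemma:limitkernelnew} are then bookkeeping.
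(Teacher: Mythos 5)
Your proposal is correct and follows essentially the same route as the paper: the paper's proof is exactly an application of Theorem \ref{thm:applicationclm}, with the triviality of the kernel of the limiting operator $L_1+L_2^{\ast}$ on $L_{\operatorname{ext}}^2(\Sigma_0)$ supplied by Lemma \ref{lemma:limitkernelnew}. Your additional verifications (that $\mathcal D_T=\slashed D_T^{\ast}\slashed D_T$, that the glued operator is an exponentially small, $\chi_T$-localized perturbation of the cylindrical model via Proposition \ref{prop:expsmallperturbation}, and the multi-component version of the gluing theorem) are exactly the bookkeeping the paper leaves implicit.
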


\begin{proof}
The statement follows by applying Theorem 	\ref{thm:applicationclm}. The assumption that the kernel of the   limiting operator $L_{1}+L_{2}^{\ast}$ is trivial on $L_{\operatorname{ext}}^2(\Sigma_0)$ is satisfied thanks to Lemma \ref{lemma:limitkernelnew}. 
\end{proof}

After these preparations we are finally in the position to show the desired estimate on the inverse $G_{(A_R^{\app},\Phi_R^{\app})}$ of the operator
\begin{equation*}\label{eq:defnlinoperator1}
L_{(A_R^{\app},\Phi_R^{\app})}=\Delta_{A }-i \ast M_{\Phi }\colon \Omega^0(\Sigma_t,i\mathfrak{su}(E))\to \Omega^0(\Sigma_t,i\mathfrak{su}(E))
\end{equation*}
introduced in Eq.\ \eqref{eq:defnlinoperator}.

\begin{theorem}\label{thm:L2est}
There exists a constant $R_0>0$   such that for every $0<R<R_0$  the operator norm of $G_{(A_R^{\app},\Phi_R^{\app})}\colon L^2(\Sigma_t)\to  L^2(\Sigma_t)$ is bounded from above by some constant $M_R  \sim \left|\log R\right|^{2}$.
\end{theorem}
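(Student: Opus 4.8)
The plan is to obtain the bound as an essentially direct consequence of Lemma~\ref{lem:mainestimate}, the only substantive work being to compare the quadratic form of $L_R:=L_{(A_R^{\app},\Phi_R^{\app})}$ with the restriction of the quadratic form of $\mathcal D_T$ to the summand $\Omega^0(\Sigma_t,i\mathfrak{su}(E))$ of the even part of the deformation complex. Write $A:=A_R^{\app}$, $\Phi:=\Phi_R^{\app}$ and $T=\lvert\log R\rvert$. First I would record the integration-by-parts identity of Lemma~\ref{positivity}, which is algebraic and valid for an arbitrary pair $(A,\Phi)$, not merely for solutions of Eq.~\eqref{eq:hitequ}:
\[
\langle L_R\gamma,\gamma\rangle_{L^2(\Sigma_t)}=\|d_A\gamma\|_{L^2}^2+2\,\|[\Phi\wedge\gamma]\|_{L^2}^2\qquad\bigl(\gamma\in\Omega^0(\Sigma_t,i\mathfrak{su}(E))\bigr).
\]
On the other hand, in the $\mathbb Z_2$-graded picture the operator $L_{1,T}$ attached to $(A,\Phi)$ sends $\gamma$ to the odd-degree form assembled from $d_A\gamma$ and $[\Phi\wedge\gamma]$, so that, under the Hodge-$\ast$ and $(0,1)$-projection identifications used in Eq.~\eqref{eq:operatorl1l2ast}, one has $\|L_{1,T}\gamma\|_{L^2}^2\le c_0\,\langle L_R\gamma,\gamma\rangle_{L^2}$ for a universal constant $c_0>0$. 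Making this comparison precise --- keeping track of the $\sqrt 2$'s coming from $\alpha\mapsto\pi^{0,1}\alpha$ and from Clifford multiplication by $d\tau$ --- is the step I expect to be the most delicate, though it is entirely elementary.

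Next I would observe that, regarding $\gamma$ as the element $(\gamma,0,0)$ of the even part $\Omega^0\oplus\Omega^2\oplus\Omega^2$ on which $L_{1,T}+L_{2,T}^{\ast}$ acts, one has $(L_{1,T}+L_{2,T}^{\ast})(\gamma,0,0)=L_{1,T}\gamma$, because $L_{2,T}^{\ast}$ annihilates the $\Omega^2$-components, which we have set to zero. Hence, by the definition of $\mathcal D_T$ in Eq.~\eqref{eq:Laplacianapp},
\[
\|L_{1,T}\gamma\|_{L^2}^2=\langle\mathcal D_T(\gamma,0,0),(\gamma,0,0)\rangle_{L^2}\ge\frac1{CT^2}\,\|\gamma\|_{L^2}^2,
\]
the last inequality being Lemma~\ref{lem:mainestimate} restated: since $\mathcal D_T=(L_{1,T}+L_{2,T}^{\ast})^{\ast}(L_{1,T}+L_{2,T}^{\ast})$ is non-negative, self-adjoint and bijective with $\|\mathcal D_T^{-1}\|_{\mathcal L(L^2,L^2)}\le CT^2$, the bottom of its spectrum is at least $(CT^2)^{-1}$. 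Combining the two displays yields the coercivity estimate $\langle L_R\gamma,\gamma\rangle_{L^2(\Sigma_t)}\ge (c_0CT^2)^{-1}\,\|\gamma\|_{L^2(\Sigma_t)}^2$ for all $\gamma\in H^2(\Sigma_t,i\mathfrak{su}(E))$ and all sufficiently small $R$.

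Finally I would conclude as follows: $L_R=\Delta_A-i\ast M_\Phi$ is self-adjoint and elliptic of Laplace type, hence Fredholm of index $0$ as a map $H^2\to L^2$; the coercivity estimate forces injectivity, therefore bijectivity, and shows that $\inf\operatorname{spec}(L_R)\ge (c_0CT^2)^{-1}$. Consequently $G_{(A_R^{\app},\Phi_R^{\app})}=L_R^{-1}$ extends to a bounded self-adjoint operator on $L^2(\Sigma_t)$ with $\|G_{(A_R^{\app},\Phi_R^{\app})}\|_{\mathcal L(L^2,L^2)}\le c_0CT^2=:M_R\sim\lvert\log R\rvert^2$, which is the assertion. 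The genuinely hard analytic input --- triviality of $\ker(L_1+L_2^{\ast})$ on $L^2_{\operatorname{ext}}(\Sigma_0)$ (Lemma~\ref{lemma:limitkernelnew}) together with the Cappell--Lee--Miller gluing estimate (Theorem~\ref{thm:applicationclm}) --- has already been packaged into Lemma~\ref{lem:mainestimate}, so the only remaining obstacle is the bookkeeping of the metric identifications in the first step.
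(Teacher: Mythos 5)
Your proposal is correct, and its skeleton is the paper's: compare $L_R$ with $L_{1,T}^{\ast}L_{1,T}$ via the identity of Lemma~\ref{positivity} (which, as you rightly stress, is algebraic and needs no equations to hold), then feed in Lemma~\ref{lem:mainestimate}, and finish by self-adjointness and positivity of $L_R$. The one place where you genuinely deviate is the bridge between $\mathcal D_T$ and the $0$-form block. The paper stays at the level of operator norms: it writes $L_{1,T}^{\ast}L_{1,T}\gamma=\mathcal D_T\gamma-L_{2,T}L_{1,T}\gamma$, notes that $L_{2,T}L_{1,T}\gamma=[F_{A_R^{\app}}^{\perp}\wedge\gamma]+[[\Phi_R^{\app}\wedge(\Phi_R^{\app})^{\ast}]\wedge\gamma]$ does not vanish because the approximate solution is not exact, and controls it by the error estimate of Lemma~\ref{lem:errorappr}, which contributes an $e^{-\delta''T}$ term absorbed by the triangle inequality for large $T$. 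You instead pass to the quadratic form: since $\mathcal D_T=(L_{1,T}+L_{2,T}^{\ast})^{\ast}(L_{1,T}+L_{2,T}^{\ast})$ and $(L_{1,T}+L_{2,T}^{\ast})(\gamma,0,0)=L_{1,T}\gamma$, the troublesome cross term $L_{2,T}L_{1,T}\gamma$ lands in the $\Omega^2\oplus\Omega^2$ summand and pairs to zero against $(\gamma,0,0)$, so $\|L_{1,T}\gamma\|^2=\langle\mathcal D_T(\gamma,0,0),(\gamma,0,0)\rangle$ exactly, and the spectral-gap reformulation of Lemma~\ref{lem:mainestimate} (valid because $\mathcal D_T$ is nonnegative and self-adjoint) gives the lower bound directly. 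This is slightly cleaner: it makes the appeal to Lemma~\ref{lem:errorappr} unnecessary in this proof and avoids the large-$T$ absorption step, at the modest cost of the bookkeeping you flag (the unitary substitution $\gamma\mapsto i\gamma$ between $i\mathfrak{su}(E)$ and $\mathfrak{su}(E)$, and the harmless constants from the Hodge-$\ast$ and $(0,1)$-projection identifications, which the paper likewise treats as immaterial). Both routes rest on the same hard input, Lemma~\ref{lemma:limitkernelnew} plus the Cappell--Lee--Miller estimate packaged in Lemma~\ref{lem:mainestimate}, and both yield $M_R\sim\left|\log R\right|^2$.
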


\begin{proof}
We first observe that conjugation by the map $\gamma\mapsto i\gamma$ transforms  $L_{(A_R^{\app},\Phi_R^{\app})}$ into a  unitarily equivalent operator (also denoted by $L_{(A_R^{\app},\Phi_R^{\app})}$) acting on the space $\Omega^0(\Sigma_t,\mathfrak{su}(E))$. It suffices to show the statement for this operator. Let $T=-\log R$ as before. We   note that the difference $ L_{(A_R^{\app},\Phi_R^{\app})} - L_{1,T}^{\ast}L_{1,T}$ is a nonnegative operator. Indeed, using Lemma \ref{positivity} it follows for all  $\gamma\in \Omega^0(\Sigma_t,\mathfrak{su}(E))$ that
\begin{equation*}
\langle ( L_{(A_R^{\app},\Phi_R^{\app})} - L_{1,T}^{\ast}L_{1,T})\gamma,\gamma\rangle =  \| [\Phi_R^{\app}\wedge \gamma] \|_{ L^2(\Sigma_t)}^2\geq0.
\end{equation*}
It is therefore sufficient  to prove the asserted upper bound for the norm of the inverse of $L_{1,T}^{\ast}L_{1,T}$.  Equivalently, we show that there exist constants $T_0>0$ and $C>0$ such that for all $T>T_0$ and   $\gamma\in H^2(\Sigma_t)$ it holds that
\begin{equation}\label{eq:defnlinoperator2}
\| L_{1,T}^{\ast}L_{1,T}\gamma	\|_{L^2(\Sigma_t)}\geq CT^{-2}\| \gamma	\|_{L^2(\Sigma_t)}.
\end{equation}
Lemma \ref{lem:mainestimate} yields the existence of constants $T_0>0$ and $C>0$  such that
\begin{equation*} 
\| \mathcal D_T \psi 	\|_{L^2(\Sigma_t)}\geq CT^{-2}\|\psi 	\|_{L^2(\Sigma_t)}
\end{equation*}
for  every $T>T_0$ and every differential form of even degree  $ \psi\in H^2(\Sigma_t)$, where $\mathcal D_T$ is the operator in Eq.\ \eqref{eq:Laplacianapp}.  We now note that
\begin{equation*} 
L_{2,T}L_{1,T}\gamma = [F_{A_R^{\app}}^{\perp} \wedge\gamma]+[[\Phi_R^{\app}\wedge (\Phi_R^{\app})^{\ast}]\wedge \gamma] 
\end{equation*}
for all $\gamma\in \Omega^0(\Sigma_t,\mathfrak{su}(E))$. By Lemma \ref{lem:errorappr} this term satisfies the estimate
\begin{equation*}
\| L_{2,T}L_{1,T}\gamma 	\|_{L^2(\Sigma_t)}\leq C_1R^{\delta''}\|  \gamma 	\|_{L^2(\Sigma_t)}= C_1e^{-\delta''T}\|  \gamma 	\|_{L^2(\Sigma_t)}
\end{equation*}
for $T$-independent constants $C_1,\delta''>0$. As follows from Eq.\ \eqref{eq:Laplacianapp}, the $0$-form $\gamma$ satisfies  
\begin{eqnarray*}
L_{1,T}^{\ast}L_{1,T}\gamma = \mathcal D_T\gamma -L_{2,T}L_{1,T}\gamma.
\end{eqnarray*}
Hence the triangle inequality yields the estimate
\begin{eqnarray*} 
\| L_{1,T}^{\ast}L_{1,T}\gamma 	\|_{L^2(\Sigma_t)}&\geq&  \| \mathcal D_T\gamma 	\|_{L^2(\Sigma_t)}-\| L_{2,T}L_{1,T}\gamma\|_{L^2(\Sigma_t)}\\
&\geq& CT^{-2}\|\gamma	\|_{L^2(\Sigma_t)}- C_1e^{-\delta''T}\|  \gamma 	\|_{L^2(\Sigma_t)},
\end{eqnarray*}
which for $T$ sufficiently large implies the desired inequality \eqref{eq:defnlinoperator2}. This finishes the proof of the theorem.
\end{proof}

From now on, we set $L_R:=L_{(A_R^{\app},\Phi_R^{\app})}$. Our next goal is to obtain an upper bound for the norm of the operator 
\begin{equation*}
G_R=L_R^{-1}\colon L^2(\Sigma_t)\to H^2(\Sigma_t).
\end{equation*}
 To this aim, we fix as a reference connection  some smooth extension $B$      of the model connection $A_R^{\mod}$ to   $\Sigma_t$, i.e.~$B$ equals\begin{equation*}
B=\begin{pmatrix}2i\alpha_p &0\\0&-2i\alpha_p\end{pmatrix}\,d\theta.
\end{equation*}
on each cylinder $\mathcal C_p(R)$. We introduce the Banach space  
\begin{equation*}
H_B^2(\Sigma_t):=\{\gamma\in L^2(\Sigma_t)\mid \nabla_B\gamma,\nabla_B^2\gamma\in L^2(\Sigma_t)\},
\end{equation*}
and similarly the  space $H_B^1(\Sigma_t)$.  We  compare this norm with the equivalent graph norm for the operator $L_R\colon H_B^2(\Sigma_t)\to L^2(\Sigma_t)$ defined by
\begin{equation*}
\|\gamma\|_{L_R}^2:=\|L_R\gamma\|_{L^2(\Sigma_t)}^2+\|\gamma\|_{L^2(\Sigma_t)}^2.
\end{equation*}

\begin{proposition}\label{prop:equivnorms}
There exists a  positive constant $C$ which does not depend on $R$ such that $\|\gamma\|_{H_B^2(\Sigma_t)}\leq C   \|\gamma\|_{L_R}$ for all $\gamma\in H_B^2(\Sigma_t)$.
\end{proposition}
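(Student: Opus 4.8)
The plan is to derive the estimate by passing from $L_R$ to the connection Laplacian $\Delta_B$ of the fixed translation-invariant reference connection $B$, and then to exploit that the second-order elliptic estimate for $\Delta_B$ on $\Sigma_t$ is \emph{scale-invariant} along the necks, hence uniform in $R$. I expect this last point — the $R$-uniformity of the $H^2$ elliptic estimate for $\Delta_B$ as the neck length $|\log R|\to\infty$ — to be the main obstacle: a naive appeal to elliptic theory on $\Sigma_t$ produces a constant that could \emph{a priori} degenerate as $R\searrow 0$. The resolution is that along each cylinder $\mathcal C_p(R)$ the operator is the \emph{same} Laplacian of the constant (hence flat, $\tau$-translation-invariant) connection on the flat cylinder $d\tau^2+d\vartheta^2$, regardless of $R$, so the estimate there does not see the length.

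The first two steps are soft. Writing $L_R=\Delta_{A_R^{\app}}-i\ast M_{\Phi_R^{\app}}$ as in Eq.~\eqref{eq:defnlinoperator}, one has the pointwise bound $|i\ast M_{\Phi_R^{\app}}\gamma|\le C|\Phi_R^{\app}|_g^2\,|\gamma|$; since $(A_R^{\app},\Phi_R^{\app})$ equals $(A_p^{\mod},\Phi_p^{\mod})$ on each $\mathcal C_p(R)$, where $|\Phi_p^{\mod}|_g$ is constant, and equals $(A,\Phi)$ up to a term of size $\mathcal O(R^{\delta''})$ on the transition collars together with the (fixed) thick part by Lemma~\ref{thm:asymptbehav} and Lemma~\ref{lem:normformlem}, the norm of $i\ast M_{\Phi_R^{\app}}$ on $L^2$ is bounded independently of $R$, so $\|\Delta_{A_R^{\app}}\gamma\|_{L^2}\le C\|\gamma\|_{L_R}$. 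Next, with $a:=A_R^{\app}-B$, the difference $\Delta_{A_R^{\app}}-\Delta_B$ is a first-order operator whose coefficients involve only $a$, $\nabla_B a$ and $a^2$; by construction $a\equiv 0$ on each neck, so $a$ is supported in the thick part (which is $R$-independent) together with the transition collars $\{\tfrac{3R}{4}\le|z|\le R\}$, each of fixed $g$-length $\log\tfrac43$, and on this set $a$, $\nabla_B a$, $\nabla_B^2 a$ are bounded uniformly in $R$ (indeed of size $\mathcal O(R^{\delta''})$) by Lemma~\ref{thm:asymptbehav}, Proposition~\ref{prop:diaggauge}, Lemma~\ref{lem:normformlem} and the cutoff bound~\eqref{eq:assumptcutoff} — this is precisely the type of computation carried out in the proof of Lemma~\ref{lem:errorappr}. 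Hence $\|(\Delta_{A_R^{\app}}-\Delta_B)\gamma\|_{L^2}\le C(\|\nabla_B\gamma\|_{L^2}+\|\gamma\|_{L^2})$ with $C$ independent of $R$. Using the identity $\|\nabla_B\gamma\|_{L^2}^2=\langle\Delta_B\gamma,\gamma\rangle\le\|\Delta_B\gamma\|_{L^2}\|\gamma\|_{L^2}$, valid on the closed manifold $\Sigma_t$ since $\Delta_B=\nabla_B^\ast\nabla_B$, I absorb the $\nabla_B\gamma$-term and obtain $\|\Delta_B\gamma\|_{L^2}\le C(\|\Delta_{A_R^{\app}}\gamma\|_{L^2}+\|\gamma\|_{L^2})\le C\|\gamma\|_{L_R}$.

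The heart of the proof is then the $R$-uniform estimate $\|\nabla_B^2\gamma\|_{L^2(\Sigma_t)}\le C(\|\Delta_B\gamma\|_{L^2}+\|\gamma\|_{L^2})$. I would prove this via a partition of unity $1=\phi_0+\sum_p\phi_p$ on $\Sigma_t$ in which $\phi_0$ is supported in a fixed enlargement of the thick part and each $\phi_p$ is supported in a flat enlargement of the neck $\mathcal C_p(R)$, with all transitions occurring in collars of fixed location and length, so that $\|\phi_j\|_{C^2}$ is bounded independently of $R$. On the thick part the standard interior $H^2$ elliptic estimate for $\Delta_B$ applies with an $R$-independent constant. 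On each neck, $B=b\,d\vartheta$ with $b$ constant and diagonal is flat, so for a section $u$ compactly supported in the flat cylinder, integration by parts together with $F_B=0$ (which makes the mixed second covariant derivatives commute) gives the exact identity $\|\nabla_B^2 u\|_{L^2}^2=\|\Delta_B u\|_{L^2}^2$ — a bound with \emph{no dependence on the length of the cylinder}. Applying this to $u=\phi_p\gamma$, bounding the commutator $[\Delta_B,\phi_p]\gamma$ (a first-order term supported where $\nabla\phi_p\ne 0$, hence $\le C(\|\nabla_B\gamma\|_{L^2}+\|\gamma\|_{L^2})$ with $R$-independent $C$), and summing over $p$ and $\phi_0$ yields $\|\nabla_B^2\gamma\|_{L^2}\le C(\|\Delta_B\gamma\|_{L^2}+\|\nabla_B\gamma\|_{L^2}+\|\gamma\|_{L^2})$; a final absorption of $\|\nabla_B\gamma\|_{L^2}$ as above removes the middle term.

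Combining the three estimates: $\|\gamma\|_{L^2}\le\|\gamma\|_{L_R}$ is trivial, $\|\nabla_B\gamma\|_{L^2}\le C(\|\Delta_B\gamma\|_{L^2}^{1/2}\|\gamma\|_{L^2}^{1/2})\le C\|\gamma\|_{L_R}$ from step two, and $\|\nabla_B^2\gamma\|_{L^2}\le C(\|\Delta_B\gamma\|_{L^2}+\|\gamma\|_{L^2})\le C\|\gamma\|_{L_R}$ from step three; adding these gives $\|\gamma\|_{H_B^2(\Sigma_t)}\le C\|\gamma\|_{L_R}$ with $C$ independent of $R$, as claimed. The only place where $R$ enters the constant is through the cutoffs in the partition of unity, which by construction have $R$-independent geometry, and through the coefficients of $\Delta_{A_R^{\app}}-\Delta_B$ and $i\ast M_{\Phi_R^{\app}}$, which were shown to be uniformly bounded (in fact decaying in $R$) on their supports.
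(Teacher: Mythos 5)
Your overall strategy coincides with the paper's: compare $L_R$ with the Laplacian $\Delta_B$ of the translation-invariant reference connection, bound $\ast M_{\Phi_R^{\app}}$ uniformly on $L^2$, treat $S_R=\Delta_{A_R^{\app}}-\Delta_B$ as a first-order perturbation, and recover the gradient term from $\|\nabla_B\gamma\|_{L^2}^2=\langle\Delta_B\gamma,\gamma\rangle_{L^2}$. Your partition-of-unity/Bochner argument on the flat cylinders is a reasonable way to substantiate the $R$-uniform elliptic estimate $\|\gamma\|_{H_B^2}\leq C(\|\Delta_B\gamma\|_{L^2}+\|\gamma\|_{L^2})$, which the paper justifies only by appealing to translation invariance.

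There is, however, a genuine flaw in how you control $S_R$: the claim that $a=A_R^{\app}-B$ vanishes on each neck is false. By the construction in \textsection\ref{subsect:approxsolutions}, $(A_R^{\app},\Phi_R^{\app})$ coincides with the model solution (whose connection is $B$) only on the middle portion $|z|\leq\tfrac{3R}{4}$ of $\mathcal C_p(R)$; on the outer portions $R\leq|z|\leq 1$, whose cylindrical length grows like $|\log R|$ and is therefore not an $R$-independent set, one has $A_R^{\app}=A$, which differs from $B=A_p^{\mod}$ by the nonzero decaying tail of Lemma \ref{thm:asymptbehav}. This is exactly the region the paper isolates as $\mathcal F_p(R_0)$ and where the real work happens: there the coefficients of $S_R$ are estimated in $H_B^1$ via Lemmas \ref{thm:asymptbehav} and \ref{lem:normformlem}, producing a term $C_1R_0^{\nu}\|\gamma\|_{H_B^2}$ which is then absorbed by fixing $R_0$ with $C_0C_1R_0^{\nu}<1$. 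Your route can be repaired on that region, because Lemma \ref{thm:asymptbehav} (together with Proposition \ref{prop:diaggauge}) does give pointwise bounds of order $\mathcal O(r^{\delta})$, in the cylindrical metric, for $a$ and $\nabla_B a$ there; but you must invoke those bounds rather than the vanishing of $a$. A second overreach concerns the transition collars $\tfrac{3R}{4}\leq|z|\leq R$: the coefficients of $S_R$ there involve two derivatives of the gauge parameter $\chi_R\gamma$ of Lemma \ref{lem:normformlem}, and that $\gamma$ is only produced in the weighted space $H_{-1+\delta'}^2(r^{-1}dr)$ of Proposition \ref{prop:gaugetomodel}, so your asserted uniform pointwise bounds on $\nabla_B a$ (and a fortiori on $\nabla_B^2 a$, which you do not actually need) do not follow from the cited lemmas without an additional elliptic-regularity bootstrap for that $\gamma$; this is precisely why the paper estimates $\lambda_R=A_R^{\app}-B$ in $H_B^1$ and accepts an absorbable multiple of $\|\gamma\|_{H_B^2}$ instead of claiming $L^\infty$ control of the differentiated coefficients. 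With these two corrections your argument closes and is otherwise parallel to the paper's proof.
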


\begin{proof}
We first note   the elliptic estimate
\begin{equation}\label{eq:equivnormsellest}
\|\gamma\|_{H_B^2(\Sigma_t)}\leq C_0(\|\Delta_B\gamma\|_{L^2(\Sigma_t)}+\|\gamma\|_{L^2(\Sigma_t)})
\end{equation}
for $\gamma\in H_B^2(\Sigma_t)$. The constant $C_0$ appearing here can be  taken to be independent of $R$. Namely, after passing to cylindrical coordinates $(\tau,\vartheta)=(-\log r,-\theta)$  on $\mathcal C_p(R)$, $p\in\mathfrak p$,   as before, the operator  $\Delta_B$  becomes invariant under translations $\tau\mapsto \tau_0+\tau$. Hence the above elliptic estimate holds with   a uniform constant  $C$. We now  consider the difference $S_R:=\Delta_{A_R^{\app}}-\Delta_B$.  On   $\mathcal C_p(R)$,   putting $\lambda_R\,d\theta:=A_R^{\app}-B$ and $B=\beta \,d\theta$, it is given by     
\begin{equation*}
S_R\gamma=-r[\partial_{\theta}\lambda_R+[\beta,\lambda_R],\gamma]-2r[\lambda_R,\partial_{\theta}\gamma+[\beta,\gamma]]-r[\lambda_R,[\lambda_R,\gamma]].
\end{equation*}
Then for any  $0<R_0\leq 1$ we let $\mathcal F_p(R_0)$ denote (the possibly empty) subcylinder consisting of the points   $(r,\theta)\in \mathcal C_p(R)$ such that $\{ R\leq r\leq R_0\}$. On $\mathcal F_p(R_0)$, the term $S_R\gamma$     can be estimated as 
\begin{eqnarray*}
\lefteqn{\|S_R\gamma\|_{L^2(\mathcal F_p(R_0))}}\\
&\leq &C (\|\lambda_R\|_{H_B^1(\mathcal F_p(R_0))} \|\gamma\|_{C^0(\mathcal F_p(R_0))} +  \|\lambda_R\|_{H_B^1(\mathcal F_p(R_0))}\|\gamma\|_{H_B^2(\mathcal F_p(R_0))} \\
&& +  \|\lambda_R\|_{H_B^1(\mathcal F_p(R_0))}^2\|\gamma\|_{H_B^1(\mathcal F_p(R_0))}   )\\
&\leq& C_1R_0^{\nu}\|\gamma\|_{H_B^2(\mathcal F_p(R_0))}
\end{eqnarray*}
for   constants  $C_1,\nu>0$, using Lemma \ref{thm:asymptbehav} and Lemma \ref{lem:normformlem}. We now fix $R_0$ sufficiently small such that $C_0C_1R_0^{\nu}<1$ and define   $\Sigma_t^
{\operatorname{ext}}:=\Sigma_t\setminus\bigcup_{p\in\mathfrak p}\mathcal  F_p(R_0)$. On $\Sigma_t^
{\operatorname{ext}}$ we can estimate 
\begin{equation*}
 \|S_R\gamma\|_{L^2(\Sigma_t^
{\operatorname{ext}})}	\leq C( \|\nabla_B\gamma\|_{L^2(\Sigma_t^
{\operatorname{ext}})}  +\|\gamma\|_{L^2(\Sigma_t^
{\operatorname{ext}})} )
\end{equation*}
for some $R$-independent constant $C$, since there the coefficients of the first-order operator $S_R$ admit uniform bounds in $R$ with respect to the $C^1$ norm. Now   integration by parts over the closed surface $\Sigma_t$ and an application of the Cauchy-Schwarz inequality yields  
\begin{multline*}
\|\nabla_B\gamma\|_{L^2(\Sigma_t^
{\operatorname{ext}})}^2 \leq \|\nabla_B\gamma\|_{L^2(\Sigma_t)}^2 = \langle\Delta_B\gamma,\gamma \rangle_{L^2(\Sigma_t)} \\
\leq  \frac{\varepsilon^2}{2}\| \Delta_B\gamma \|_{L^2(\Sigma_t)}^2  +\frac{1}{2\varepsilon^2}  	\|\gamma	\|_{L^2(\Sigma_t)}^2
\end{multline*}
for any $\varepsilon>0$. Combining the estimates on the sets $\mathcal F_p(R_0)$ and $\Sigma_t^
{\operatorname{ext}}$ we thus arrive at
\begin{equation}\label{eq:unifestSR}
\|S_R\gamma\|_{L^2(\Sigma_t)} \leq CR_0^{\nu}\|\gamma\|_{H_B^2(\mathcal C_p(R))}+  \frac{\varepsilon}{\sqrt 2}\| \Delta_B\gamma \|_{L^2(\Sigma_t)}  +\frac{1}{\sqrt 2\varepsilon}  	\|\gamma	\|_{L^2(\Sigma_t)}.
\end{equation}
Therefore,  by inequality \eqref{eq:equivnormsellest} and the triangle inequality,
\begin{eqnarray}\label{eq:unifequivnorms}
 \nonumber\|\gamma\|_{H_B^2(\Sigma_t)}&\leq& C_0 (\|\Delta_B\gamma\|_{L^2(\Sigma_t)}+\|\gamma\|_{L^2(\Sigma_t)})\\
\nonumber&\leq& C_0 (\|\Delta_{A_R^{\app}}\gamma\|_{L^2(\Sigma_t)}+\|S_R\gamma\|_{L^2(\Sigma_t)}+\|\gamma\|_{L^2(\Sigma_t)})\\
\nonumber&\leq& C_0 (\|L_R\gamma\|_{L^2(\Sigma_t)}+\|S_R\gamma\|_{L^2(\Sigma_t)}+\|\gamma\|_{L^2(\Sigma_t)})\\
&\leq& C_0( C_2 \|\gamma\|_{L_R}+\|\ast M_{\Phi_R^{\app}}\gamma\|_{L^2(\Sigma_t)}+C_1R_0^{\nu}\|\gamma\|_{H_B^2(\Sigma_t)}).
\end{eqnarray}
In the last step we   used inequality \eqref{eq:unifestSR} to bound the term $\|S_R\gamma\|_{L^2(\Sigma_t)}$. Then, by choosing the constant $\varepsilon>0$ sufficiently small, the term  involving $\Delta_B\gamma$ can be absorbed in $\|\gamma\|_{H_B^2(\Sigma_t)}$. The right-hand side of inequality \eqref{eq:unifequivnorms} can   further be estimated as follows. The  summand $C_0C_1R_0^{\nu}\|\gamma\|_{H_B^2(\Sigma_t)}$   can be absorbed in $\|\gamma\|_{H_B^2(\Sigma_t)}$  since $C_0C_1R_0^{\nu}<1$ by assumption. Finally, the norm of the operator $\ast M_{\Phi_R^{\app}}\colon L^2(\Sigma_t)\to L^2(\Sigma_t)$ is  bounded from above by   some uniform constant $C$. To see this, we recall that $\Phi_R^{\app} =\varphi_p\frac{dz}{z}$ on  $\mathcal C_p(R)$ for some pointwise bounded endomorphism field $\varphi_p$. Hence   $M_{\Phi_R^{\app}}$ acts as
\begin{equation*}
M_{\Phi_R^{\app}}\gamma= ([\varphi_p^{\ast},[\varphi_p,\gamma]] + [ \varphi_p , [ \varphi_p^{\ast},\gamma]] )\frac{d\bar z\wedge dz}{|z|^2}.
\end{equation*} 
With respect to the metric $g=\frac{|dz|^2}{|z|^2}$, we have that $\ast \frac{d\bar z\wedge dz}{|z|^2}=2i $, hence the endomorphism $\ast M_{\Phi_R^{\app}}$ satisfies a uniform pointwise bound on $\mathcal C_p(R)$, and thus on $\Sigma_t$. This implies the asserted bound for the operator norm and finishes the proof of the proposition. 
\end{proof}

Recall that  $G_R=L_R^{-1}$.

\begin{corollary}\label{lem:estL2H2norm}
There exists a constant $C>0$   such that for all sufficiently small $0<R<R_0$ there holds the estimate
\begin{equation*}
\|G_R\gamma\|_{H_B^2(\Sigma_t)}\leq C(\log R)^2\|\gamma\|_{L^2(\Sigma_t)}
\end{equation*}
for all $\gamma\in L^2(\Sigma_t)$.
\end{corollary}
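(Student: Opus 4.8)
The plan is to derive this estimate as a formal consequence of the two main results just established, namely the $L^2\to L^2$ bound on $G_R$ from Theorem \ref{thm:L2est} and the uniform equivalence of the $H_B^2$-norm with the graph norm $\|\cdot\|_{L_R}$ from Proposition \ref{prop:equivnorms}. First I would fix $\gamma\in L^2(\Sigma_t)$ and set $u:=G_R\gamma$. Note that $u\in H_B^2(\Sigma_t)$: this follows from elliptic regularity for the operator $L_R$ together with the mapping property $L_R\colon H_B^2(\Sigma_t)\to L^2(\Sigma_t)$ recorded before Proposition \ref{prop:equivnorms}, and by construction $L_R u=\gamma$.

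Applying Proposition \ref{prop:equivnorms} to $u$ and using the definition of the graph norm gives
\[
\|G_R\gamma\|_{H_B^2(\Sigma_t)}=\|u\|_{H_B^2(\Sigma_t)}\leq C\|u\|_{L_R}=C\left(\|L_R u\|_{L^2(\Sigma_t)}^2+\|u\|_{L^2(\Sigma_t)}^2\right)^{1/2}=C\left(\|\gamma\|_{L^2(\Sigma_t)}^2+\|G_R\gamma\|_{L^2(\Sigma_t)}^2\right)^{1/2},
\]
where, crucially, the constant $C$ is the one from Proposition \ref{prop:equivnorms} and hence independent of $R$. Next I would invoke Theorem \ref{thm:L2est}: there is $R_0>0$ such that for every $0<R<R_0$ the operator norm of $G_R\colon L^2(\Sigma_t)\to L^2(\Sigma_t)$ is at most $M_R\sim|\log R|^2$, so in particular $\|G_R\gamma\|_{L^2(\Sigma_t)}\leq M_R\|\gamma\|_{L^2(\Sigma_t)}$. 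Substituting this into the previous display yields
\[
\|G_R\gamma\|_{H_B^2(\Sigma_t)}\leq C\sqrt{1+M_R^2}\,\|\gamma\|_{L^2(\Sigma_t)}.
\]
Since $M_R\to\infty$ as $R\searrow0$, after shrinking $R_0$ if necessary we may assume $M_R\geq1$, whence $\sqrt{1+M_R^2}\leq 2M_R\leq C'(\log R)^2$ for a suitable $C'$, and the claimed bound follows with a new constant.

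There is essentially no obstacle here, since the corollary is purely a matter of bookkeeping: the only point worth stating explicitly is that the constant in Proposition \ref{prop:equivnorms} is uniform in $R$, so that combining it with the $|\log R|^2$ growth of $\|G_R\|_{\mathcal L(L^2,L^2)}$ from Theorem \ref{thm:L2est} does not change the exponent. Everything else reduces to the elementary estimate $\sqrt{1+M_R^2}\leq 2M_R$ valid once $M_R\geq1$.
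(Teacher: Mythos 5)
Your argument is correct and follows essentially the same route as the paper: apply Proposition \ref{prop:equivnorms} to $u=G_R\gamma$, unwind the graph norm using $L_Ru=\gamma$, and then invoke the $L^2\to L^2$ bound of Theorem \ref{thm:L2est}. If anything, your bookkeeping with $\sqrt{1+M_R^2}\leq 2M_R$ is slightly more careful than the paper's displayed chain of inequalities, and it lands exactly on the stated $(\log R)^2$ bound.
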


\begin{proof}
Let $\gamma\in L^2$. From Proposition \ref{prop:equivnorms} and Theorem \ref{thm:L2est} it follows that
\begin{eqnarray*}
\|G_R\gamma\|_{H_B^2(\Sigma_t)}^2&\leq& C\|G_R\gamma\|_{L_R}^2\\
&=&C (\|\gamma\|_{L^2(\Sigma_t)}^2+\|G_R\gamma\|_{L^2(\Sigma_t)}^2)\\
&\leq& C (\|\gamma\|_{L^2(\Sigma_t)}^2+(\log R)^2\|\gamma\|_{L^2(\Sigma_t)}^2)\\
&\leq& C(\log R)^2\|\gamma\|_{L^2(\Sigma_t)}^2,
\end{eqnarray*}
as claimed.
\end{proof}

%
%
\section{Gluing theorem}\label{sect:gluingthm}

%
\subsection{Deforming the approximate solutions}

We are now finally prepared to show that  every approximate solution $(A_R^{\app},\Phi_R^{\app})$  can be perturbed to a nearby exact solution of Hitchin's equations when $0<R<R_0$ is sufficiently small. We keep using the notation introduced in \textsection \ref{lin.hit.ope}, with an additional subscript $R$ to indicate the underlying manifold.
\medskip\\
We aim to find such an exact solution in the same complex gauge orbit as $(A_R^{\app},\Phi_R^{\app})$, i.e.\  to be of the form $(A_R,\Phi_R)=\exp(\gamma_R)^{\ast}(A_R^{\app},\Phi_R^{\app})$ for a suitable map $\gamma_R\in H_B^2(\Sigma_t,i\mathfrak{su}(E))$. To prove existence of      $\gamma_R$ we shall employ a  contraction mapping argument in the manner of \cite{msww14}. To carry it out, we need to study the linearization
$L_R$,  computed at $(A_R^{\app},\Phi_R^{\app})$.  The argument relies on controlling the following quantities: 
\begin{itemize}
\item The error up to which $(A_R^{\app},\Phi_R^{\app})$ satisfies the self-duality equations, i.e.~the norm of the quantity $ F_{A_R^{\app}}^{\perp}+ [\Phi_R^{\app}\wedge (\Phi_R^{\app})^{\ast}]$, or equivalently in the notation of Eq.\ \eqref{eq:defmapFt}, the norm of $\mathcal F_R(0)$. An estimate for this error was obtained in Lemma \ref{lem:errorappr}.
\item The norm of the inverse $G_R=L_R^{-1}$, which is taken care of by Theorem  \ref{thm:L2est} and  Corollary \ref{lem:estL2H2norm}.
\item The Lipschitz constants of the linear and higher order terms in the Taylor expansion of $\mc{F}_R$.
\end{itemize}
We are now focussing on the last issue more closely.
\medskip\\
For any pair $(A,\Phi)$ and $g=\exp(\gamma)$, $\gamma\in\Omega^0(\Sigma_t,i\mf{su}(E))$, we have that
\begin{equation*}
\mathcal{O}_{(A,\Phi)}(\gamma)=g^{\ast}(A,\Phi)=(A+g^{-1}(\bar\partial_{A}g)-(\partial_{A}g)g^{-1},g^{-1}\Phi g),
\end{equation*}
and consequently, 
\begin{gather*}
\exp(\gamma)^{\ast}A=A + (\bar\partial_{A} - \partial_{A}) \gamma + R_{A}(\gamma),\\
\exp(-\gamma)\Phi \exp(\gamma)=\Phi + [\Phi\wedge \gamma] + R_{\Phi}(\gamma).
\end{gather*}
The explicit expressions of these remainder terms are
\begin{align}
\label{eq:remainderRA}
\begin{split}
R_{A}(\gamma)=\exp(-\gamma) (\bar\partial_{A}\exp (\gamma))-(\partial_{A}\exp (\gamma)) 
\exp(-\gamma)- (\bar\partial_{A}-\partial_{A}) \gamma,\\
R_{\Phi}(\gamma)= \exp(-\gamma) \Phi \exp (\gamma) - [\Phi\wedge \gamma]-\Phi.
\end{split}
\end{align}
We then calculate that 
\begin{equation}\label{eq:TaylorexpF}
\begin{array}{rl}
\begin{split}
\mc{F}_R(\exp \gamma) &= F^\perp_{\exp(\gamma)^{\ast}A}+[\exp(-\gamma)\Phi\exp(\gamma)\wedge(\exp(-\gamma)\Phi\exp(\gamma))^\ast] \\
&=\pr_1(\mathcal{H}_R(A,\Phi))+ L_R \gamma + Q_R(\gamma),
\end{split}
\end{array}
\end{equation}
where we set
\begin{align*}
Q_R(\gamma):=& d_{A}(R_{A}(\gamma))+[R_{\Phi}(\gamma)\wedge\Phi^{\ast}]+ [\Phi\wedge R_{\Phi}(\gamma)^{\ast}]\\ 
&+\frac 12 [((\bar\partial_{A}-\partial_{A})\gamma+R_{A}(\gamma))\wedge ((\bar\partial_{A}-\partial_{A})\gamma + R_{A}(\gamma))]\\
&+[([\Phi\wedge \gamma]+R_{\Phi}(\gamma))\wedge([\Phi\wedge \gamma]+R_{\Phi}(\gamma))^{\ast}].
\end{align*}

\begin{lemma}
In the above, let $(A,\Phi)=(A_R^\app, \Phi_R^\app)$. Then there exists a constant $C>0$ such that    
\begin{equation}\label{eq:Lipschitzhigherorder}
\|Q_R(\gamma_1)-Q_R(\gamma_0)\|_{L^2(\Sigma_t)}\leq C r \|\gamma_1-\gamma_0\|_{H_B^2(\Sigma_t)}
\end{equation}
for all $0<r \leq1$ and  $\gamma_0,\gamma_1\in B_{r}$, the closed ball of radius $r$ around $0$ in $H_B^2(\Sigma_t)$.
\end{lemma}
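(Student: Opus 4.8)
The plan is to expand $Q_R(\gamma_1) - Q_R(\gamma_0)$ into its constituent bilinear and higher-order pieces and estimate each one, using the explicit formulas for the remainder terms $R_A(\gamma)$ and $R_\Phi(\gamma)$ in Eq.\ \eqref{eq:remainderRA} together with the continuity of the multiplication maps $H^2\cdot H^1 \to H^1$ and $H^1 \cdot H^1 \to L^2$ on the closed surface $\Sigma_t$. First I would record the key algebraic fact that $R_A(\gamma)$ and $R_\Phi(\gamma)$ are \emph{at least quadratic} in $\gamma$: from the exponential series one has $R_A(\gamma) = \sum_{k\geq 2} c_k (\operatorname{ad}\gamma)^{k-1}(d_A\gamma)$-type terms (schematically), and similarly $R_\Phi(\gamma) = \sum_{k \geq 2} \frac{(-1)^k}{k!}(\operatorname{ad}\gamma)^k \Phi$, so that for $\gamma \in B_r$ with $r \leq 1$ one gets $\|R_A(\gamma)\|_{H_B^1} \leq C\|\gamma\|_{H_B^2}^2$ and $\|R_\Phi(\gamma)\|_{H_B^1} \leq C\|\gamma\|_{H_B^2}^2$, with $C$ independent of $R$ (the $R$-independence coming from Lemma \ref{thm:asymptbehav} and Lemma \ref{lem:normformlem}, which give uniform bounds on $\Phi_R^{\app}$ and on the model connection data along the necks, exactly as in the proof of Proposition \ref{prop:equivnorms}). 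Likewise the \emph{differences} satisfy $\|R_A(\gamma_1) - R_A(\gamma_0)\|_{H_B^1} \leq C(\|\gamma_0\|_{H_B^2} + \|\gamma_1\|_{H_B^2})\|\gamma_1 - \gamma_0\|_{H_B^2} \leq 2Cr\|\gamma_1 - \gamma_0\|_{H_B^2}$, and similarly for $R_\Phi$; this is the main mechanism producing the factor $r$ on the right-hand side of Eq.\ \eqref{eq:Lipschitzhigherorder}.

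Next I would go through the five groups of terms in the definition of $Q_R$. The term $d_A(R_A(\gamma))$ contributes $\|d_A(R_A(\gamma_1) - R_A(\gamma_0))\|_{L^2} \leq \|R_A(\gamma_1) - R_A(\gamma_0)\|_{H_B^1} \leq Cr\|\gamma_1 - \gamma_0\|_{H_B^2}$ directly. The terms $[R_\Phi(\gamma)\wedge\Phi^*]$ and $[\Phi \wedge R_\Phi(\gamma)^*]$ are estimated using that $\Phi = \Phi_R^{\app}$ is pointwise uniformly bounded on $\Sigma_t$ (again by Lemma \ref{thm:asymptbehav}), so these are controlled by $C\|R_\Phi(\gamma_1) - R_\Phi(\gamma_0)\|_{L^2} \leq Cr\|\gamma_1-\gamma_0\|_{H_B^2}$. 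For the quadratic expression $\tfrac12[((\bar\partial_A - \partial_A)\gamma + R_A(\gamma))\wedge(\cdots)]$, I would write it as a bilinear form $\tfrac12 B(X(\gamma), X(\gamma))$ with $X(\gamma) := (\bar\partial_A - \partial_A)\gamma + R_A(\gamma) \in H_B^1$, use the polarization identity $B(X_1,X_1) - B(X_0,X_0) = B(X_1 - X_0, X_1) + B(X_0, X_1 - X_0)$, and bound via $H^1\cdot H^1 \to L^2$: one needs $\|X(\gamma_i)\|_{H_B^1} \leq C\|\gamma_i\|_{H_B^2} \leq Cr$ (here the linear part contributes the factor $r$, since $\|\gamma_i\|_{H_B^2}\le r$) and $\|X(\gamma_1) - X(\gamma_0)\|_{H_B^1} \leq C\|\gamma_1 - \gamma_0\|_{H_B^2}$ — the latter is \emph{not} smaller than linear, but it is multiplied by $\|X(\gamma_i)\|_{H_B^1} \leq Cr$, giving the required $r\|\gamma_1-\gamma_0\|_{H_B^2}$. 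The last group $[([\Phi\wedge\gamma] + R_\Phi(\gamma))\wedge(\cdots)^*]$ is handled identically, with $Y(\gamma) := [\Phi\wedge\gamma] + R_\Phi(\gamma)$ satisfying $\|Y(\gamma_i)\|_{H_B^1}\le C\|\gamma_i\|_{H_B^2}\le Cr$ (using the uniform pointwise bound on $\Phi_R^{\app}$ for the linear piece) and the analogous Lipschitz bound for $Y(\gamma_1) - Y(\gamma_0)$.

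The main obstacle — and the only genuinely non-routine point — is keeping \emph{all} constants independent of $R$ as $R \searrow 0$. The necks $\mathcal C_p(R)$ grow arbitrarily long, so a naive application of Sobolev multiplication or elliptic estimates on $\Sigma_t$ would in principle produce $R$-dependent (in fact $T = |\log R|$-dependent) constants. The resolution is that, in the cylindrical coordinates $(\tau,\vartheta) = (-\log r, -\theta)$, the background connection $B$, the metric $g = |dz|^2/|z|^2$, and the multiplication/Sobolev embedding constants are all \emph{translation-invariant} in $\tau$, so a uniform constant works on each neck (this is precisely the mechanism already invoked in Proposition \ref{prop:equivnorms} for the elliptic estimate \eqref{eq:equivnormsellest}), while on the fixed ``thick'' part $\Sigma_t^{\operatorname{ext}}$ one has a compact region with $R$-uniformly bounded geometry. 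Combining the neck and thick-part estimates via a partition of unity (with derivatives of cutoffs uniformly bounded, cf.\ Eq.\ \eqref{eq:assumptcutoff}) yields the uniform constant $C$ in Eq.\ \eqref{eq:Lipschitzhigherorder}, completing the proof.
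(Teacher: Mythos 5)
Your proposal is correct and follows essentially the same route as the paper: first establish Lipschitz bounds for the remainder terms $R_A$, $R_\Phi$ in $H_B^1$ (the paper writes $\exp(\gamma)=1+\gamma+S(\gamma)$ rather than your schematic series, but the mechanism is identical), then expand $Q_R(\gamma_1)-Q_R(\gamma_0)$ and treat the quadratic pieces as bilinear forms via the polarization splitting $B(\lambda_1,\psi_1)-B(\lambda_0,\psi_0)=B(\lambda_1,\psi_1-\psi_0)+B(\lambda_1-\lambda_0,\psi_0)$ together with the multiplications $H_B^2\cdot H_B^1\hookrightarrow H_B^1$ and $H_B^1\cdot H_B^1\hookrightarrow L^2$. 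Your point about $R$-uniformity of the constants via translation invariance of $B$ along the necks is exactly the justification the paper invokes as well.
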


\begin{proof}
The proof follows closely the lines of \cite[Lemma 6.8]{msww14} and has   two steps. For ease of notation, we write $(A,\Phi)$ for $(A_R^\app, \Phi_R^\app)$. Along the proof we shall at several places make use of the continuous embeddings
\begin{align*}
H_B^2(\Sigma_t)\hookrightarrow C^0(\Sigma_t),\qquad H_B^2(\Sigma_t)\cdot H_B^1(\Sigma_t)\hookrightarrow H_B^1(\Sigma_t),\\
 H_B^1(\Sigma_t)\cdot H_B^1(\Sigma_t)\hookrightarrow L^2(\Sigma_t). 
\end{align*}
The corresponding estimates for the norms hold with constants uniform in $R$. This follows since the restriction of the connection $B$ to each euclidean cylinder $\mathcal C_p(R)$ is invariant under  translations along the $\tau$-direction.  
\setcounter{step}{0}
\begin{step}\label{step:remain.terms}
We first check that if $r \in (0,1]$ and $\gamma_0, \gamma_1\in B_{r}$, then 
\begin{align*}
&\|R_A(\gamma_1)-R_A(\gamma_0) \|_{H_B^1(\Sigma_t)}\leq Cr\|\gamma_1 -\gamma_0\|_{H_B^2(\Sigma_t)},\\
&\|R_\Phi(\gamma_1) - R_\Phi(\gamma_0) \|_{H_B^1(\Sigma_t)} \leq Cr\|\gamma_1 -\gamma_0\|_{H_B^2(\Sigma_t)}. 
\end{align*}
To show the first inequality we begin by estimating the difference of the  terms involving $\bar\partial_A$ on the right in Eq.\ \eqref{eq:remainderRA}: 
\begin{multline*}
\big\|\exp(-\gamma_1) (\bar\partial_A(\exp \gamma_1))-\exp(-\gamma_0)(\bar\partial_A(\exp \gamma_0))-\bar\partial_A 
(\gamma_1-\gamma_0) \big\|_{H_B^1(\Sigma_t)} \\[0.5em]
\leq \| (\exp(-\gamma_1)-\exp(-\gamma_0))\bar\partial_A(\exp(\gamma_1))\|_{H_B^1(\Sigma_t)} \\[0.5em]
+\|\exp(-\gamma_0)\big(\bar\partial_A(\exp(\gamma_1)-\exp(\gamma_0))\big)-\bar\partial_A(\gamma_1-\gamma_0)\|_{H_B^1(\Sigma_t)} 
=: \mathrm{I} + \mathrm{II}.
\end{multline*}
Writing $\exp(\gamma)=1+\gamma+S(\gamma)$, then we have 
\begin{align*}
\|\mathrm{I}\|_{H_B^1(\Sigma_t)}&\leq C_0\|\exp(-\gamma_1)-\exp(-\gamma_0)\|_{H_B^2}\|\bar\partial_A\exp(\gamma_1)\|_{H_B^1(\Sigma_t)}\\
&\leq C_1 \|\gamma_1-\gamma_0\|_{H_B^2}\|\gamma_1+S(\gamma_1)\|_{H_B^2(\Sigma_t)}\\
&\leq C_2 r\|\gamma_1-\gamma_0\|_{H_B^2(\Sigma_t)},
\end{align*}
where for   $\gamma\in H_B^2(\Sigma_t)$ we used the estimate
\begin{align*}
\|\bar\partial_A\gamma\|_{H_B^1(\Sigma_t)}\leq& \|\bar\partial_B\gamma\|_{H_B^1(\Sigma_t)}+\|[(B^{0,1}-A^{0,1})\wedge\gamma]\|_{H_B^1(\Sigma_t)}\\
\leq& \| \gamma\|_{H_B^1(\Sigma_t)}+\|B-A\|_{H_B^1(\Sigma_t)}\|\gamma\|_{H_B^2(\Sigma_t)}\\
\leq& C\|\gamma\|_{H_B^2(\Sigma_t)}
\end{align*}
for some constant $C$ which can be chosen uniform in $R$. Similarly, 
\begin{align*}
\lefteqn{\|\mathrm{II}\|_{H_B^1(\Sigma_t)}}\\
=&\,\|(1-\gamma_0+S(-\gamma_0))\big(\bar\partial_A(\gamma_1-\gamma_0+S(\gamma_1)-S(\gamma_0)\big)-\bar\partial_A(\gamma_1-\gamma_0)\|_{H_B^1(\Sigma_t)}\\
\leq&\,\|\bar\partial_A(S(\gamma_1)-S(\gamma_0))\|_{H_B^1(\Sigma_t)}\\
&+\|(-\gamma_0+S(-\gamma_0))\bar\partial_A(\gamma_1-\gamma_0+S(\gamma_1)-S(\gamma_0))\|_{H_B^1(\Sigma_t)}\\
\leq&\, C_0 \|S(\gamma_1)-S(\gamma_0)\|_{H_B^2(\Sigma_t)}\\
&+C_0 \| -\gamma_0+S(-\gamma_0)\|_{H_B^2}\|\gamma_1-\gamma_0+S(\gamma_1)-S(\gamma_0)\|_{H_B^2(\Sigma_t)}\\
\leq&\,C_1 r\|\gamma_1-\gamma_0\|_{H_B^2(\Sigma_t)},
\end{align*}
since
\begin{equation*}
\|S(\gamma_1)-S(\gamma_0)\|_{H_B^2(\Sigma_t)}\leq\|\gamma_1-\gamma_0\|_{H_B^2(\Sigma_t)}\sum_{k\geq1}r^k/k! 
\leq Cr\|\gamma_0-\gamma_1\|_{H_B^2(\Sigma_t)}. 
\end{equation*}
These estimates together with analogous ones for the terms involving 
$\partial_{A}$ give the stated Lipschitz estimate for $R_{A}$. The corresponding estimate for 
\begin{equation*}
R_\Phi= \exp(-\gamma)\Phi\exp\gamma-[\Phi\wedge \gamma]-\Phi 
\end{equation*}
and the estimates
\begin{equation*}
\|R_A(\gamma)\|_{H_B^1(\Sigma_t)}\leq C r,\qquad\|R_\Phi(\gamma)\|_{H_B^1(\Sigma_t)}\leq Cr	
\end{equation*}
for $\gamma\in B_r$ follow in the same way. 
\end{step}

\begin{step}
We  prove the claim. First,
\begin{equation*}\label{eq:LipschitzQ}
\begin{aligned}
Q_R(\gamma_1) - & Q_R(\gamma_0) = d_{A} (R_{A}(\gamma_1) - R_{A}(\gamma_0))\\[0.5ex]
&+  [ (R_{\Phi}(\gamma_1) - R_{\Phi}(\gamma_0)) \wedge \Phi^*] +  [\Phi \wedge (R_{\Phi}(\gamma_1) - R_{\Phi}(\gamma_0))^*]\\[0.5ex]
&+ \tfrac 12 [((\bar\partial_{A}-\partial_{A})\gamma_1 + R_{A}(\gamma_1)) \wedge ((\bar\partial_{A}-\partial_{A})\gamma_1 + 
R_{A}(\gamma_1))]\\[0.5ex]
&- \tfrac 12 [((\bar\partial_{A}-\partial_{A})\gamma_0 + R_{A}(\gamma_0)) \wedge ((\bar\partial_{A}-\partial_{A})\gamma_0 + 
R_{A}(\gamma_0))]\\[0.5ex]
&+ [([\Phi \wedge \gamma_1] + R_{\Phi}(\gamma_1)) \wedge ([\Phi \wedge \gamma_1] + R_{\Phi}(\gamma_1))^* ]\\[0.5ex]
&- [([\Phi \wedge \gamma_0] + R_{\Phi}(\gamma_0)) \wedge ([\Phi \wedge \gamma_0] + R_{\Phi}(\gamma_0))^* ]. 
\end{aligned}
\end{equation*}
Using that $\| B-A\|_{C^0(\Sigma_t)}\leq C$ for some constant $C$ independent of $R$ we obtain
\begin{equation*}
\|d_{A } (R_{A }(\gamma_1) - R_{A }(\gamma_0))\|_{L^2(\Sigma_t)}\leq C \|  R_{A}(\gamma_1) - R_{A}(\gamma_0)\|_{H_B^1(\Sigma_t)},	
\end{equation*}
and we then apply Step~\ref{step:remain.terms}. The remaining terms are bilinear combinations $B(\lambda,\psi)$ of functions 
$\lambda$ and $\psi$ with fixed coefficients, which can be estimated as 
\begin{align*}
\lefteqn{\|B(\lambda_1,\psi_1)-B(\lambda_0,\psi_0)\|_{L^2(\Sigma_t)}}\\
\leq&\|B(\lambda_1,\psi_1-\psi_0)\|_{L^2(\Sigma_t)}+\|B(\lambda_1-\lambda_0,\psi_0)\|_{L^2(\Sigma_t)}\\
\leq& C\|\psi_1-\psi_0\|_{H_B^1(\Sigma_t)}\|\lambda_1\|_{H_B^1(\Sigma_t)} +C\|\psi_0\|_{H_B^1(\Sigma_t)}\|\lambda_1-\lambda_0\|_{H_B^1(\Sigma_t)}.
\end{align*}
The desired estimate follows from Step~\ref{step:remain.terms} again. 
\end{step}
This completes the proof of the lemma.
\end{proof}

\subsection{Contraction mapping argument}\label{subsect:contractionmapping}
%

After these preparations we are in position to show that for every $0<R<R_0$ the approximate solution  $(A_R^{\app},\Phi_R^{\app})$ may be perturbed to  a nearby exact solution, thus proving our main theorem.

\begin{theorem}\label{thm:contrthm}
There exist a constant $0<R_0<1$ and for every $0<R<R_0$ a constant $\sigma_R>0$ and a unique section $\gamma\in H_B^2(\Sigma_t,i\mathfrak{su}(E))$ satisfying $\|\gamma\|_{H_B^2(\Sigma_t)}\leq \sigma_R$ with the following significance.  Set $g=\exp(\gamma)$. Then
\begin{equation*}
(A_R,\Phi_R)=g^{\ast}(A_R^{\app},\Phi_R^{\app})
\end{equation*}
is a solution of Eq.\  \eqref{eq:hitequ} on the smooth surface $\Sigma_t$. 
\end{theorem}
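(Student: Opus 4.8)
The plan is to recast $\mathcal F_R(\exp\gamma)=0$ as a fixed-point equation and solve it by the Banach fixed-point theorem. Since the second equation in Eq.\ \eqref{eq:hitequ} is invariant under complex gauge transformations and already holds for $(A_R^\app,\Phi_R^\app)$, it suffices to find $\gamma\in H_B^2(\Sigma_t,i\su(E))$ with $\mathcal F_R(\exp\gamma)=0$. By the Taylor expansion \eqref{eq:TaylorexpF} we have $\mathcal F_R(\exp\gamma)=\pr_1(\mathcal H_R(A_R^\app,\Phi_R^\app))+L_R\gamma+Q_R(\gamma)$, and since $L_R$ is invertible by Theorem \ref{thm:L2est} with inverse $G_R=L_R^{-1}$, this is equivalent to $\gamma=\mathcal T_R(\gamma)$, where
\begin{equation*}
\mathcal T_R(\gamma):=-G_R\big(\pr_1(\mathcal H_R(A_R^\app,\Phi_R^\app))+Q_R(\gamma)\big).
\end{equation*}
By the multiplication theorems used above (so that $Q_R\colon H_B^2\to L^2$) together with Corollary \ref{lem:estL2H2norm} (so that $G_R\colon L^2\to H_B^2$), the map $\mathcal T_R$ is well defined and smooth from $H_B^2(\Sigma_t,i\su(E))$ to itself. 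I would then show that for a suitable radius $\sigma_R>0$ with $\sigma_R\to 0$ as $R\searrow 0$, $\mathcal T_R$ is a contraction of the closed ball $B_{\sigma_R}\subset H_B^2(\Sigma_t,i\su(E))$, and take $\gamma$ to be its unique fixed point.

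All the quantitative inputs are in place. First, Lemma \ref{lem:errorappr} gives a $C^0$-bound $CR^{\delta''}$ on the error $F^\perp_{A_R^\app}+[\Phi_R^\app\wedge(\Phi_R^\app)^\ast]$, which is supported in the union of the annuli $S_p=\{\tfrac{3R}{4}\le|z|\le R\}$; in cylindrical coordinates these are annuli of fixed modulus, hence of uniformly bounded area in the metric $\tfrac{|dz|^2}{|z|^2}$, so $\|\pr_1(\mathcal H_R(A_R^\app,\Phi_R^\app))\|_{L^2(\Sigma_t)}\le CR^{\delta''}$. Second, Corollary \ref{lem:estL2H2norm} gives $\|G_R\|_{\mathcal L(L^2,H_B^2)}\le C(\log R)^2$. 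Third, since $R_A(0)=R_\Phi(0)=0$ every term in $Q_R$ is at least quadratic, so $Q_R(0)=0$, and the Lipschitz estimate \eqref{eq:Lipschitzhigherorder} applied on $B_{\sigma_R}$ yields $\|Q_R(\gamma_1)-Q_R(\gamma_0)\|_{L^2(\Sigma_t)}\le C\sigma_R\|\gamma_1-\gamma_0\|_{H_B^2(\Sigma_t)}$ and, with $\gamma_0=0$, $\|Q_R(\gamma)\|_{L^2(\Sigma_t)}\le C\sigma_R^2$ for $\gamma_0,\gamma_1,\gamma\in B_{\sigma_R}$. Combining, for $\gamma\in B_{\sigma_R}$,
\begin{equation*}
\|\mathcal T_R(\gamma)\|_{H_B^2(\Sigma_t)}\le C(\log R)^2\big(CR^{\delta''}+C\sigma_R^2\big),
\end{equation*}
and for $\gamma_0,\gamma_1\in B_{\sigma_R}$,
\begin{equation*}
\|\mathcal T_R(\gamma_1)-\mathcal T_R(\gamma_0)\|_{H_B^2(\Sigma_t)}\le C(\log R)^2\,\sigma_R\,\|\gamma_1-\gamma_0\|_{H_B^2(\Sigma_t)}.
\end{equation*}
Choosing $\sigma_R:=C'(\log R)^2R^{\delta''}$ with $C'$ large enough, one checks that $(\log R)^2\sigma_R\to 0$, so the contraction factor is eventually $<\tfrac12$, and that $C(\log R)^2(CR^{\delta''}+C\sigma_R^2)\le\sigma_R$ once $R$ is small (the $\sigma_R^2$ term is negligible, as $(\log R)^4R^{\delta''}\to 0$). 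Hence $\mathcal T_R$ maps $B_{\sigma_R}$ into itself and is a contraction there, so it has a unique fixed point $\gamma\in B_{\sigma_R}$, i.e.\ $\mathcal F_R(\exp\gamma)=0$.

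Finally, $(A_R,\Phi_R):=\exp(\gamma)^\ast(A_R^\app,\Phi_R^\app)$ satisfies the first equation in Eq.\ \eqref{eq:hitequ} and the second by complex gauge invariance; a standard elliptic bootstrap in a local slice upgrades $(A_R,\Phi_R)$ to a smooth solution on $\Sigma_t$, and since $\|\gamma\|_{H_B^2(\Sigma_t)}\le\sigma_R\to 0$, tracking the gauge transformations of Lemmas \ref{thm:asymptbehav} and \ref{lem:normformlem} gives the uniform convergence on compact subsets of $\Sigma_0\setminus\mathfrak p$ claimed in Theorem \ref{thm:mainthm}. The main obstacle is not the contraction scheme itself, which is routine once the inputs are assembled, but the tension between the polynomially small error $R^{\delta''}$ and the only logarithmically controlled inverse $\|G_R\|\sim(\log R)^2$: the argument succeeds precisely because $(\log R)^2R^{\delta''}\to 0$, so that $\sigma_R$ shrinks and the Lipschitz term $\|G_R\|\cdot C\sigma_R$ stays below $1$. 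This is why the earlier sections invest so much in the sharp lower bound $\lambda_R\gtrsim|\log R|^{-2}$ for the smallest eigenvalue of $L_R$ (Theorem \ref{thm:L2est}), which in turn relies on the triviality of $\ker(L_1+L_2^\ast)$ on $L^2_{\operatorname{ext}}(\Sigma_0)$ (Lemma \ref{lemma:limitkernelnew}) via the Cappell--Lee--Miller gluing theorem.
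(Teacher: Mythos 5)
Your proposal is correct and follows essentially the same route as the paper: the same fixed-point map $\gamma\mapsto -G_R\bigl(\pr_1(\mathcal H_R(A_R^{\app},\Phi_R^{\app}))+Q_R(\gamma)\bigr)$, with Lemma \ref{lem:errorappr}, Corollary \ref{lem:estL2H2norm} and the Lipschitz estimate \eqref{eq:Lipschitzhigherorder} as the three quantitative inputs, and the same observation that $(\log R)^2R^{\delta''}\to 0$ makes the scheme close. The only difference is the choice of radius ($\sigma_R\sim(\log R)^2R^{\delta''}$ versus the paper's $\sigma_R\sim|\log R|^{-2-\epsilon}$), which is immaterial since both satisfy the contraction and self-mapping conditions for small $R$.
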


\begin{proof}
Our argument rests on the Banach fixed point theorem. By Eq.\ \eqref{eq:TaylorexpF} there is the expansion  
\begin{equation*}
\mathcal F_R(\exp\gamma)=\pr_1(\mathcal H_R(A_R^{\app},\Phi_R^{\app}))+L_R\gamma+Q_R(\gamma),
\end{equation*}
where the term $Q_R$ involves quadratic and higher order terms in $\gamma$. Then $ \mathcal F_R(\exp\gamma)=0$  if and only if $\gamma$ is a fixed point of the map
\begin{equation*}
T\colon \gamma\mapsto -G_R\left(\pr_1(\mathcal H_R(A_R^{\app},\Phi_R^{\app}))+Q_R(\gamma)\right),
\end{equation*}
where we denote as above $G_R=L_R^{-1}$. We show that $T$ has a unique fixed point as a map $T\colon B_{\sigma_R} \to B_{\sigma_R} $ with $B_{\sigma_R} $ denoting the open ball of radius $\sigma_R$ in $H_B^2(\Sigma_t)$. We claim that for $\sigma_R>0$ sufficiently small, $T$ is a contraction of $ B_{\sigma_R}$, from which we immediately obtain a unique fixed point $\gamma\in B_{\sigma_R}$. To show this, we use Corollary \ref{lem:estL2H2norm} and the inequality \eqref{eq:Lipschitzhigherorder} to obtain
\begin{eqnarray*}
\|T(\gamma_1-\gamma_0)\|_{H_B^2(\Sigma_t)} &=&\|G_R(Q_R(\gamma_1)-Q_R(\gamma_0))\|_{H_B^2(\Sigma_t)}\\
&\leq&C(\log R)^2\|Q_R(\gamma_1)-Q_R(\gamma_0)\|_{L^2(\Sigma_t)}\\
&\leq&C(\log R)^2\sigma_R    \|\gamma_1-\gamma_0\|_{H_B^2(\Sigma_t)}.
\end{eqnarray*}
Let $\epsilon>0$ and set $\sigma_R:=C^{-1}\left|\log R\right|^{-2-\epsilon}$. Then for all   $0<R<e^{-1}$ it follows that  $C(\log R)^2\sigma_R<1$ and therefore $T$ is a contraction on the ball of radius $\sigma_R$. Furthermore, since $Q_R(0)=0$, 
using  again  Corollary \ref{lem:estL2H2norm}  and Lemma \ref{lem:errorappr} it follows that  
\begin{eqnarray*}
\|T(0)\|_{H_B^2(\Sigma_t)}&=&  \|G_t(\pr_1(\mathcal H_R(A_R^{\app},\Phi_R^{\app})))\|_{H_B^2(\Sigma_t)}\\
&\leq&C(\log R)^2\|\pr_1(\mathcal H_R(A_R^{\app},\Phi_R^{\app}))\|_{L^2(\Sigma_t)}\\
&\leq&C(\log R)^2R^{\delta''}.
\end{eqnarray*}
Thus when $R_0$ is chosen to be sufficiently small,  then  $\|T(0)\|_{H_B^2(\Sigma_t)} < \frac{1}{10}\sigma_R$ for all $0<R<R_0$ and  the above choice of $\sigma_R$, so the ball $B_{\sigma_R}$ is mapped to itself by $T$. Therefore the Banach fixed point theorem applies and yields the existence of a unique solution $\gamma$ as desired.
\end{proof}

The proof of the main theorem is now an almost immediate consequence of Theorem \ref{thm:contrthm}.

\begin{proof}{\bf{[Theorem \ref{thm:mainthm}].}}
We claim  that the family of solutions $(A_R,\Phi_R)$  obtained in Theorem \ref{thm:contrthm} satisfies the asserted properties. Indeed, for any fixed compact subset $K\subseteq\Sigma_0$ and all sufficiently small $0<R<R_0$ it follows from the above that
\begin{multline*}
\|(A_R,\Phi_R)-(A_R^{\app},\Phi_R^{\app})\|_{L^2(K)}\\
=\|\exp(\gamma)^{\ast}(A_R^{\app},\Phi_R^{\app})-(A_R^{\app},\Phi_R^{\app})\|_{L^2(K)}
\leq C\|\gamma\|_{H_B^2(K)}\leq C\sigma_R
\end{multline*}
for some uniform (in the parameter  $R$) constant $C$, and with $\lim_{R\to0}\sigma_R=0$. By a standard elliptic bootstrapping argument, applied to Eq.\ \eqref{eq:TaylorexpF} with $\mathcal F_R(\exp\gamma)=0$, it in fact holds that $\gamma\to 0$ as $R\to0$ in every norm $H_B^k(K)$, $k\geq2$. Since $H_B^k(K)$ embeds into $C^{\ell}(K)$ for all  sufficiently large $k$, and because $(A_R^{\app},\Phi_R^{\app})\to (A_0^{\app},\Phi_0^{\app})$ uniformly on $K$, we obtain the uniform convergence of $(A_R,\Phi_R)$ to $(A_0^{\app},\Phi_0^{\app})$ as $R\to0$. Since by construction the approximate solution $(A_0^{\app},\Phi_0^{\app})$ coincides   with the given exact one, the claim follows. 
\end{proof}

%
%
%

\end{document}